\numberwithin{equation}{section}
\newtheorem{thm}{Theorem}[section]
\newtheorem{lem}[thm]{Lemma}
\newtheorem{cor}[thm]{Corollary}
\newtheorem{Prop}[thm]{Proposition}
\newcommand{\N}{\mathbb{N}}
\newcommand{\R}{\mathbb{R}}
\renewcommand{\i}{{\rm i}}
\def\de {\delta}
\def\va {\varphi}
\begin{document}
\baselineskip=14pt

\title[Critical Choquard equation]{Existence of solutions for critical Choquard equations via the concentration compactness method}

\author[F.\ Gao]{Fashun Gao}
\author[E.\ Silva]{Edcarlos D. da Silva}
\author[M.\ Yang]{Minbo Yang$^*$}
\author[J.\ Zhou]{Jiazheng Zhou}

\address{Fashun Gao, \newline\indent Department of Mathematics, Zhejiang Normal University, \newline\indent
	Jinhua 321004, People's Republic of China}
\email{fsgao@zjnu.edu.cn}

\address{Edcarlos D. da Silva, \newline\indent IME ¨C Universidade Federal de Goi\'as, \newline\indent
 74001-970, Goiania, GO, Brazil}
\email{eddomingos@hotmail.com}

\address{Minbo Yang, \newline\indent Department of Mathematics, Zhejiang Normal University, \newline\indent
	Jinhua 321004, People's Republic of China}
\email{mbyang@zjnu.edu.cn}

\address{Jiazheng Zhou,\newline\indent Universidade de Bras\'{\i}lia, Departamento de Matem\'atica, \newline\indent
	70910-900, Bras\'iliaDF, Brazil}
\email{jiazzheng@gmail.com}

\subjclass[2010]{35J20, 35J60, 35A15}
\keywords{Critical Choquard equation; Hardy--Littlewood--Sobolev inequality; Concentration-Compactness principle.}

\thanks{$^*$Minbo Yang is the corresponding author who is partially supported by NSFC(11571317,11671364).}

\begin{abstract}
In this paper we consider the nonlinear Choquard equation
$$
-\Delta u+V(x)u
=\left(\int_{\mathbb{R}^N}\frac{G(y,u)}{|x-y|^{\mu}}dy\right)g(x,u)\hspace{4.14mm}\mbox{in}\hspace{1.14mm} \mathbb{R}^N,
$$
where $0<\mu<N$, $N\geq3$,  $g(x,u)$ is of critical growth due to the Hardy--Littlewood--Sobolev inequality and $G(x,u)=\displaystyle\int^u_0g(x,s)ds$. Firstly, by assuming that the potential $V(x)$ might be sign-changing, we study the existence of Mountain-Pass solution via a concentration-compactness principle for the Choquard equation. Secondly, under the conditions introduced by Benci and Cerami \cite{BC1}, we also study the existence of high energy solution by using a global compactness lemma for the nonlocal Choquard equation.

\end{abstract}

\maketitle

\begin{center}
	\begin{minipage}{8.5cm}
		\small
		\tableofcontents
	\end{minipage}
\end{center}
%

\section{Introduction and main results}
The nonlinear Choquard equation
\begin{equation}\label{Nonlocal.S1}
-\Delta u+ V(x)u=\big(|x|^{-\mu}\ast |u|^{q}\big)|u|^{q-2}u,\hspace{4.14mm} \mbox{in}\ \mathbb{R}^N
\end{equation}
arises in various fields of mathematical physics, such as the description of the quantum theory of a polaron at rest by S. Pekar in 1954 \cite{Ps} and the modeling of an electron trapped
in its own hole in 1976 in the work of P. Choquard, as a certain approximation to Hartree-Fock theory of one-component plasma \cite{L1} The equation \eqref{Nonlocal.S1} is also known as the Schr\"{o}dinger-Newton equation \cite{Pe}, since the convolution part might be treated as a coupling with a Newton equation.

Mathematically, Lieb \cite{L1} proved the existence and uniqueness, up to translations, of the ground
state for \eqref{Nonlocal.S1} with $\mu=1$, $q=2$ and $V$ is a positive constant and Lions \cite{Ls} showed the existence of a sequence of radially symmetric solutions by variational methods. In the last decades, a great deal of mathematical efforts has been devoted to the study of existence, multiplicity and properties of the solutions of the nonlinear Choquard equation \eqref{Nonlocal.S1}. In \cite{CCS1, MZ,  MS1}, the authors showed the regularity, positivity and radial symmetry of the ground states and
derived decay property at infinity as well.  Moroz and Van Schaftingen also considered in \cite{MS3} the
existence of ground states under the assumption of Berestycki-Lions type. If the periodic potential $V(x)$ changes sign and $0$ lies in the gap of the spectrum of $-\Delta +V$, then the energy functional associated to the problem is strongly indefinite indeed. For this case, the existence of solution for $p=2$ was considered in \cite{BJS}. Later Ackermann \cite{AC} proposed a new approach to prove the existence of infinitely many geometrically distinct weak solutions. If the nonlinear Choquard equation is equipped with deepening potential well of the form $\lambda a(x)+1$ where $a(x)$ is a nonnegative continuous function such that $\Omega =$ int $(a^{-1}(0))$ is a non-empty bounded open set with smooth boundary, in \cite{ANY} the authors studied the existence and multiplicity of multi-bump shaped solution. The existence and concentration behavior of solutions for the singularly perturbed subcritical Choquard equation(Semiclassical Problems) have been considered in \cite{AY1, AY2, ACTY, AGSY, CCS, MS4, WW}, Wei
and Winter \cite{ WW} constructed families of solutions
 by a Lyapunov-Schmidt type reduction.
 Cingolani et.al.\cite{CCS} showed that there exists a family of solutions
having multiple concentration regions which are located around the minimum points of the potential. Moroz and Van Schaftingen \cite{MS4} developed a nonlocal penalization technique and showed the existence of a family of solutions
concentrating around the local minimum of $V$. In \cite{AY1, AY2}, Alves and Yang proved the existence, multiplicity and concentration of solutions for the equation by penalization method and Lusternik-Schnirelmann theory.

To consider the nonlocal elliptic equation involving Riesz type potential, it is necessary to recall the well--known Hardy--Littlewood--Sobolev inequality.
\begin{Prop}\label{HLS}
 (Hardy--Littlewood--Sobolev inequality). (See \cite{LL}.) Let $t,r>1$ and $0<\mu<N$ with $1/t+\mu/N+1/r=2$, $f\in L^{t}(\mathbb{R}^N)$ and $h\in L^{r}(\mathbb{R}^N)$. There exists a sharp constant $C(t,N,\mu,r)$, independent of $f,h$, such that
\begin{equation}\label{HLS1}
\int_{\mathbb{R}^{N}}\int_{\mathbb{R}^{N}}\frac{f(x)h(y)}{|x-y|^{\mu}}dxdy\leq C(t,N,\mu,r) |f|_{t}|h|_{r},
\end{equation}
where $|\cdot|_{q}$ for the $L^{q}(\mathbb{R}^{N})$-norm for $q\in[1,\infty]$. If $t=r=2N/(2N-\mu)$, then
$$
 C(t,N,\mu,r)=C(N,\mu)=\pi^{\frac{\mu}{2}}\frac{\Gamma(\frac{N}{2}-\frac{\mu}{2})}{\Gamma(N-\frac{\mu}{2})}\left\{\frac{\Gamma(\frac{N}{2})}{\Gamma(N)}\right\}^{-1+\frac{\mu}{N}}.
$$
In this case there is equality in \eqref{HLS1} if and only if $f\equiv Ch$ and
$$
h(x)=A(\gamma^{2}+|x-a|^{2})^{-(2N-\mu)/2}
$$
for some $A\in \mathbb{C}$, $0\neq\gamma\in\mathbb{R}$ and $a\in \mathbb{R}^{N}$.
\end{Prop}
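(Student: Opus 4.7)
The plan is to proceed in two stages: first establish \eqref{HLS1} with \emph{some} finite constant, then identify the sharp constant and the equality cases in the conformal case $t=r=\frac{2N}{2N-\mu}$. For the inequality itself I would use that the Riesz kernel $K(x)=|x|^{-\mu}$ lies in the weak Lebesgue space $L^{N/\mu,\infty}(\R^{N})$ with an explicit quasi-norm. Young's convolution inequality in its weak form (O'Neil) then gives $|f\ast K|_{q}\le C|f|_{t}\|K\|_{N/\mu,\infty}$ whenever $1+\frac{1}{q}=\frac{1}{t}+\frac{\mu}{N}$. The scaling constraint $\frac{1}{t}+\frac{\mu}{N}+\frac{1}{r}=2$ forces $\frac{1}{q}+\frac{1}{r}=1$, so Hölder's inequality yields
\[
\int_{\R^{N}}\!\!\int_{\R^{N}}\frac{f(x)h(y)}{|x-y|^{\mu}}\,dx\,dy=\int_{\R^{N}}h\,(f\ast K)\,dx\le|h|_{r}|f\ast K|_{q}\le C|f|_{t}|h|_{r},
\]
which proves the bound with a finite (not necessarily sharp) constant; Marcinkiewicz interpolation between the obvious endpoint bounds gives an alternative route to the same conclusion.

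To obtain the sharp constant and the equality characterization in the diagonal case, I would follow Lieb's strategy. The first step is a rearrangement reduction: since $K$ is radial and decreasing, the Riesz rearrangement inequality shows that replacing $f,h$ by their symmetric decreasing rearrangements $f^{*},h^{*}$ does not decrease the bilinear integral while preserving their $L^{t},L^{r}$ norms, so any optimizer may be taken radial and decreasing about the origin. The second step exploits conformal invariance: in the diagonal case the functional is invariant not only under translations and dilations but also under the Kelvin inversion $x\mapsto x/|x|^{2}$. Via inverse stereographic projection these symmetries generate the conformal group of $S^{N}$, and the bilinear form pulls back to a manifestly conformally invariant expression on the compact sphere.

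The final step is to produce and identify the extremizer. On $S^{N}$ one can extract a strong $L^{t}$ limit of a maximizing sequence, modulo the Möbius action, by combining the rearrangement reduction with a standard concentration-compactness argument, yielding a genuine optimizer. The diagonal relation $f\equiv Ch$ is then forced by the Cauchy-Schwarz inequality for the positive-definite bilinear form: $B(f,h)\le B(f,f)^{1/2}B(h,h)^{1/2}$ with equality iff $f\propto h$. The Euler-Lagrange equation $K\ast f=c\,f^{t-1}$ combined with the moving-plane method in integral form (in the spirit of the Caffarelli-Gidas-Spruck classification for conformally invariant equations) forces the optimizer to take the stated shape $A(\gamma^{2}+|x-a|^{2})^{-(2N-\mu)/2}$, and the explicit value of $C(N,\mu)$ is then a direct Beta-function computation on this profile. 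The main obstacle throughout is the compactness step: the non-compact conformal symmetry on $\R^{N}$ prevents a naive direct method from closing, and one must either invoke Lions' concentration-compactness together with rearrangement, or transfer to $S^{N}$ to compactify the symmetry group, in order to extract a limit.
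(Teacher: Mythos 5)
The paper does not prove Proposition \ref{HLS}: it is stated as a citation to Lieb--Loss \cite{LL}, so there is no in-paper argument to compare your sketch against. That said, what you wrote is a faithful outline of the standard proof from that reference. The weak Young / O'Neil step is set up correctly: $|x|^{-\mu}\in L^{N/\mu,\infty}$, the exponent bookkeeping $1+\tfrac1q=\tfrac1t+\tfrac\mu N$ together with the constraint $\tfrac1t+\tfrac\mu N+\tfrac1r=2$ gives $\tfrac1q+\tfrac1r=1$, and the hypotheses $t,r>1$, $0<\mu<N$ keep $q$ strictly between $1$ and $\infty$, which is exactly what the weak-type Young inequality needs. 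For the sharp constant you reproduce Lieb's strategy (Riesz rearrangement to radial decreasing profiles, conformal invariance in the diagonal case, transfer to $S^N$ by stereographic projection to compactify the symmetry group, Cauchy--Schwarz for the positive-definite kernel to force $f\propto h$). One small historical remark: in Lieb's original argument the explicit profile $(\gamma^2+|x-a|^2)^{-(2N-\mu)/2}$ is obtained by showing the extremizer is constant on $S^N$, not via a moving-plane classification of the integral Euler--Lagrange equation; the moving-plane route you mention is the later Chen--Li--Ou approach and is a legitimate alternative, but it is not needed once the sphere picture is in place. As an outline this is sound; the only place where a full write-up would require real work beyond bookkeeping is the compactness step you already flag (extracting a genuine maximizer modulo the non-compact conformal group), which is the heart of Lieb's proof.
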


Let $H^{1}(\R^N)$ be the usual Sobolev spaces with norm
    \[
    \|u\|_{H^1}:=\left(\int_{\R^N}(|\nabla u|^2+ |u|^2)dx\right)^{1/2},
    \]
    $D^{1,2}(\R^N)$ be equipped with norm
    $$
\|u\|:=\Big(\int_{\R^N}|\nabla u|^{2}dx\Big)^{\frac{1}{2}}
$$
and $L^s(\R^N)$, $1 \leq s \leq \infty$,
denotes the Lebesgue space with norms
\begin{gather*}
| u |_s:=\Big(\int_{\R^N}|u|^sdx\Big)^{1/s}.
\end{gather*}
By the Hardy--Littlewood--Sobolev inequality, for every $u\in H^{1}(\mathbb{R}^{N})$, the integral
$$
\int_{\mathbb{R}^{N}}\int_{\mathbb{R}^{N}}\frac{|u(x)|^{q}|u(y)|^{q}}{|x-y|^{\mu}}dxdy
$$
is well defined if
$$
\frac{2N-\mu}{N}\leq q\leq\frac{2N-\mu}{N-2}.
$$
Due to this fact, it is quite natural to call $\frac{2N-\mu}{N}$ the lower critical exponent and $2_{\mu}^{\ast}=\frac{2N-\mu}{N-2}$ the upper critical exponent.
In \cite{MS2, DSZ}, the authors considered the nonlinear Choquard equation \eqref{Nonlocal.S1} in $\R^N$ with lower critical exponent $\frac{2N-\mu}{N}$ and obtained some existence and nonexistence results.
In order to study the
critical nonlocal equation with upper critical exponent $2_{\mu}^{\ast}$, let $S$ be the best Sobolev constant defined by:
\[
S|u|^2_{2^*}\leq \int_{\R^N}|\nabla u|^2dx \ \ \ \hbox{for all $u\in
D^{1,2}(\R^N)$},
\]we will use $S_{H,L}$ to denote the best constant defined by
\begin{equation}\label{S1}
S_{H,L}:=\displaystyle\inf\limits_{u\in D^{1,2}(\mathbb{R}^N)\backslash\{{0}\}}\ \ \frac{\displaystyle\int_{\mathbb{R}^N}|\nabla u|^{2}dx}{\left(\displaystyle\int_{\mathbb{R}^N}\int_{\mathbb{R}^N}
\frac{|u(x)|^{2_{\mu}^{\ast}}|u(y)|^{2_{\mu}^{\ast}}}{|x-y|^{\mu}}dxdy\right)^{\frac{N-2}{2N-\mu}}}.
\end{equation}
In \cite{GY} it was observed that
\begin{Prop}\label{ExFu} (See \cite{GY}.)
The constant $S_{H,L}$ defined in \eqref{S1} is achieved if and only if $$u=C\left(\frac{b}{b^{2}+|x-a|^{2}}\right)^{\frac{N-2}{2}} ,$$ where $C>0$ is a fixed constant, $a\in \mathbb{R}^{N}$ and $b\in(0,\infty)$ are parameters. What's more,
$$
S_{H,L}=\frac{S}{C(N,\mu)^{\frac{N-2}{2N-\mu}}},
$$
where $S$ is the best Sobolev constant and $C(N,\mu)$ is given in Proposition \ref{HLS}.
\end{Prop}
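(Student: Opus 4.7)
The plan is to derive Proposition \ref{ExFu} by combining the Hardy--Littlewood--Sobolev inequality in its diagonal case $t=r=\frac{2N}{2N-\mu}$ (Proposition \ref{HLS}) with the classical Sobolev inequality, and then to match up the two extremal families to identify the minimizers explicitly.

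First I would apply Proposition \ref{HLS} with $f=h=|u|^{2_\mu^\ast}$ and $t=r=\frac{2N}{2N-\mu}$. Noting that $2_\mu^\ast\cdot\frac{2N}{2N-\mu}=2^\ast$, we have $\bigl\| |u|^{2_\mu^\ast}\bigr\|_{2N/(2N-\mu)}=|u|_{2^\ast}^{2_\mu^\ast}$, and thus
\[
\int_{\R^N}\!\!\int_{\R^N}\frac{|u(x)|^{2_\mu^\ast}|u(y)|^{2_\mu^\ast}}{|x-y|^\mu}\,dx\,dy \;\le\; C(N,\mu)\,|u|_{2^\ast}^{2\cdot 2_\mu^\ast}.
\]
Raising to the power $\tfrac{N-2}{2N-\mu}$ and using $2_\mu^\ast\cdot\tfrac{N-2}{2N-\mu}=1$, the right-hand side becomes $C(N,\mu)^{(N-2)/(2N-\mu)}|u|_{2^\ast}^{2}$. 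Coupling this with the Sobolev inequality $S|u|_{2^\ast}^{2}\le\int_{\R^N}|\nabla u|^{2}\,dx$ yields the lower bound $S_{H,L}\ge S/C(N,\mu)^{(N-2)/(2N-\mu)}$.

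Next I would show the matching upper bound and identify the extremizers by testing with an Aubin--Talenti bubble $U(x)=C\bigl(b/(b^{2}+|x-a|^{2})\bigr)^{(N-2)/2}$. Raising to the power $2_\mu^\ast$, one checks that $(N-2)\cdot 2_\mu^\ast/2=(2N-\mu)/2$, so $|U|^{2_\mu^\ast}$ is (up to a multiplicative constant) of the precise form $A(b^{2}+|x-a|^{2})^{-(2N-\mu)/2}$ listed in the equality case of Proposition \ref{HLS}. Consequently equality holds in HLS for $f=h=|U|^{2_\mu^\ast}$; simultaneously, $U$ saturates the Sobolev inequality. Plugging $U$ into the Rayleigh quotient defining $S_{H,L}$ therefore gives the reverse inequality, so $S_{H,L}=S/C(N,\mu)^{(N-2)/(2N-\mu)}$ and $U$ is a minimizer.

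For the converse, suppose $u$ attains $S_{H,L}$. Then equality must hold in every step of the chain above, which forces \emph{(i)} equality in the diagonal HLS inequality for $f=h=|u|^{2_\mu^\ast}$, hence by Proposition \ref{HLS} that $|u|^{2_\mu^\ast}(x)=A(\gamma^{2}+|x-a|^{2})^{-(2N-\mu)/2}$; and \emph{(ii)} equality in the Sobolev inequality, so that $|u|$ is itself an Aubin--Talenti extremizer. Both conditions produce the same explicit family, yielding the stated characterization. The main obstacle I anticipate is keeping the chain of exponents and equality conditions coherent: one must verify carefully that the HLS extremizers, after taking the $1/2_\mu^\ast$-th root, coincide (up to relabeling parameters) with the Sobolev extremizers, so that equality in both inequalities can be attained simultaneously by the same function $u$; the algebraic identity $(N-2)\cdot 2_\mu^\ast/2=(2N-\mu)/2$ is precisely what makes this possible.
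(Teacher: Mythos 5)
Your proof is correct and follows the same standard route as the cited reference [GY]: chain the diagonal Hardy--Littlewood--Sobolev inequality with the Sobolev inequality to obtain the lower bound $S_{H,L}\ge S/C(N,\mu)^{(N-2)/(2N-\mu)}$, then test with Aubin--Talenti bubbles (noting that $|U|^{2_\mu^\ast}$ is exactly of the HLS extremal form, thanks to the exponent identity $(N-2)\cdot 2_\mu^\ast/2=(2N-\mu)/2$) to get the matching upper bound, and finally observe that a minimizer of $S_{H,L}$ must simultaneously saturate both inequalities, which pins down the extremizers. The arithmetic of the exponents checks out throughout, and the observation that the two extremal families coincide after taking the $1/2_\mu^\ast$-th root is precisely the crux; this matches the argument in [GY], so nothing essential is missing.
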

Denote  $\widetilde{U}_{\delta,z}(x):=\frac{[N(N-2)\delta]^{\frac{N-2}{4}}}{(\delta+|x-z|^{2})^{\frac{N-2}{2}}}$, $\delta>0$, $z\in\mathbb{R}^{N}$.
 We know that $\widetilde{U}_{\delta,z}$ is a minimizer for $S$ \cite{Wi} and
\begin{equation}\label{REL}
\aligned
U_{\delta,z}(x):=C(N,\mu)^{\frac{2-N}{2(N-\mu+2)}}S^{\frac{(N-\mu)(2-N)}{4(N-\mu+2)}}\widetilde{U}_{\delta,z}(x)
\endaligned
\end{equation}
is the unique  minimizer for $S_{H,L}$ that satisfies
\begin{equation}\label{CCE1}
-\Delta u
=\left(\int_{\mathbb{R}^N}\frac{|u(y)|^{2_{\mu}^{\ast}}}{|x-y|^{\mu}}dy\right)|u|^{2_{\mu}^{\ast}-2}u\hspace{4.14mm}\mbox{in}\hspace{1.14mm} \mathbb{R}^N
\end{equation}
and
$$
\int_{\mathbb{R}^N}|\nabla U_{\delta,z}|^{2}dx=\int_{\mathbb{R}^N}\int_{\mathbb{R}^N}\frac{|U_{\delta,z}(x)|^{2_{\mu}^{\ast}}
|U_{\delta,z}(y)|^{2_{\mu}^{\ast}}}{|x-y|^{\mu}}dxdy=S_{H,L}^{\frac{2N-\mu}{N-\mu+2}}.
$$
In \cite{GY, GYm} the authors
considered the Br\'{e}zis-Nirenberg type problem
\begin{equation}\label{CCE}
-\Delta u
=\left(\int_{\Omega}\frac{|u(y)|^{2_{\mu}^{\ast}}}{|x-y|^{\mu}}dy\right)|u|^{2_{\mu}^{\ast}-2}u+\lambda u\hspace{4.14mm}\mbox{in}\hspace{1.14mm} \Omega
\end{equation}
and established the existence, multiplicity and nonexistence of solutions for the nonlinear Choquard equation in bounded domain. It is observed in \cite{GSY} that equation \eqref{CCE} can regarded as a limit problem for a critical Choquard equation with deepening potential well, there the existence and asymptotic behavior of the solutions were investigated.
In \cite{AGSY}, by investigating the ground states of the critical Choquard equation with constant coefficients, the authors studied the semiclassical limit problem for the singularly perturbed Choquard equation in $\R^3$ and characterized the concentration behavior by variational methods. The upper critical case with general nonlinearity was studied in \cite{CZ}. The planar case was considered in \cite{ACTY}, there the authors established the existence of ground state for the limit problem with critical exponential growth which complemented those results for local case, and then they also studied the concentration around the global minimum set. Gao and Yang in \cite{ GY3} investigated the existence result for the strongly indefinite Choquard equation with upper critical exponent in the whole space.

In works \cite{AGSY, ACTY, GY3}, the method developed by Brezis and Nirenberg has been successfully adopt to study the Choquard equation with upper critical exponents. There the authors are able to prove the existence results by showing that the minmiax value was below some critical criteria where the $(PS)$ condition still holds. In the present paper we continue to study the Choquard equation with upper critical exponents, but with different types of potential functions. We will see that the arguments in \cite{AGSY, ACTY, GY3} does not apply  for these new situations any longer.

On one hand we are going yo study the critical Choquard equation with subcritical perturbation and potential functions might change sign
\begin{equation}\label{CCe1}
\displaystyle-\Delta u+V(x)u
=\left(\int_{\mathbb{R}^N}\frac{|u(y)|^{2_{\mu}^{\ast}}+|u(y)|^{p}}{|x-y|^{\mu}}dy\right)
\Big(|u|^{2_{\mu}^{\ast}-2}u+ \frac{p}{2_{\mu}^{\ast}}|u|^{p-2}u\Big)\hspace{4.14mm}\mbox{in}\hspace{1.14mm} \R^{N},
\end{equation}
where $N\geq3$, $0<\mu<N$,  $(2N-\mu)/N<p<(2N-\mu)/(N-2)$ and $2_{\mu}^{\ast}=(2N-\mu)/(N-2)$ is the upper critical exponent in the sense of the Hardy--Littlewood--Sobolev inequality.
To obtain the existence result we are going to prove that the lack of compactness was recovered by using the concentration compactness principle. Following \cite{Z}, we will assume that the functions $V(x)$ satisfies the following condition:

$(V)$ There exists $\tau_{0}> 0$ such that the set $\Omega_{\tau_{0}}=\{x\in\mathbb{R}^N:V(x)\leq\tau_{0}\}$ has the finite Lebesgue measure. Moreover, $V\in L_{loc}^{\infty}(\mathbb{R}^N)\cap L^{\frac{N}{2}}(\R^N)$ and there holds
$$
V_{0}:=|V_{-}(x)|_{L^{N/2}}<S,
$$
where $S$ is the best Sobolev constant and $V_{-}=\max\{-V(x),0\}$.

We can draw the following conclusion.
\begin{thm}\label{EX1}
Suppose that assumption $(V)$ holds, $N\geq3$, $0<\mu<N$ and $(2N-\mu)/N<p<(2N-\mu)/(N-2)$. Then \eqref{CCe1} admits a nontrivial solution.
\end{thm}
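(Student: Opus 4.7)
The plan is to combine a Mountain Pass argument with a nonlocal concentration-compactness analysis. Equation~\eqref{CCe1} is the Euler--Lagrange equation of
\[
I(u)=\frac12\int_{\R^N}\bigl(|\nabla u|^2+V(x)u^2\bigr)\,dx
-\frac{1}{2\cdot 2_{\mu}^{\ast}}
\int_{\R^N}\!\!\int_{\R^N}\frac{\bigl(|u(x)|^{2_{\mu}^{\ast}}+|u(x)|^{p}\bigr)\bigl(|u(y)|^{2_{\mu}^{\ast}}+|u(y)|^{p}\bigr)}{|x-y|^\mu}\,dx\,dy,
\]
which is well defined and $C^1$ on $H^1(\R^N)$ by Proposition~\ref{HLS}. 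Under hypothesis~$(V)$, H\"older's inequality with exponent $N/2$ combined with the Sobolev embedding gives
\[
\int_{\R^N}V_-(x)u^2\,dx\le V_0\,|u|_{2^*}^2\le \frac{V_0}{S}\int_{\R^N}|\nabla u|^2\,dx,
\]
and the strict bound $V_0<S$ makes the quadratic form $\int(|\nabla u|^2+Vu^2)\,dx$ coercive on $D^{1,2}(\R^N)$. I would work on $E=\{u\in D^{1,2}(\R^N):\int V_+u^2\,dx<\infty\}$ with inner product $\int(\nabla u\cdot\nabla v+V_+uv)\,dx$, on which $I$ is $C^1$ and its critical points are weak solutions of~\eqref{CCe1}.

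I would next verify the mountain-pass geometry. Near the origin, HLS together with the Sobolev embedding bounds the double-convolution term by $C(\|u\|^{2p}+\|u\|^{2\cdot 2_{\mu}^{\ast}})$; both exponents strictly exceed $2$, yielding $I(u)\ge\alpha>0$ on a small sphere $\|u\|=\rho$. For a barrier, any $\varphi\in C_c^\infty(\R^N)\setminus\{0\}$ satisfies $I(t\varphi)\to-\infty$ as $t\to+\infty$ because the nonlocal term scales like $t^{2\cdot 2_{\mu}^{\ast}}$. The Mountain Pass theorem then furnishes a Palais--Smale sequence $(u_n)$ at a level $c>0$, and boundedness in $E$ follows from a standard linear combination of $I(u_n)$ and $\langle I'(u_n),u_n\rangle$ that isolates a positive multiple of $\|u_n\|^2$; the restriction $p>(2N-\mu)/N$ ensures the relevant coefficients have the right sign.

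The crucial step is the strict inequality
\[
c<c^*:=\frac{N+2-\mu}{2(2N-\mu)}\,S_{H,L}^{\frac{2N-\mu}{N-\mu+2}},
\]
which is the one-bubble energy for the limit equation~\eqref{CCE1}. Using a truncation of the extremal $U_{\delta,z}$ from~\eqref{REL}, centered at a carefully chosen point $z\in\R^N\setminus\Omega_{\tau_0}$, I would expand $\max_{t\ge 0}I(tU_{\delta,z})$ as $\delta\to 0^+$ and show, in the spirit of Br\'ezis--Nirenberg, that the strictly positive contribution from the $|u|^p$-convolution cross terms dominates the $V$-contribution and the higher-order critical errors. The assumption $p>(2N-\mu)/N$ is precisely what ensures the subcritical gain has a lower $\delta$-rate than the potential loss, forcing the strict inequality.

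Extracting a solution then proceeds by passing to a weak limit $u_n\rightharpoonup u$ in $E$; weak continuity of the non-critical part of $I'$ together with HLS identifies $u$ as a critical point of $I$. A Br\'ezis--Lieb splitting for the double convolution yields
\[
I(u_n)=I(u)+I_\infty(u_n-u)+o(1),\qquad I_\infty'(u_n-u)\to 0,
\]
where $I_\infty$ is the functional of the limit problem~\eqref{CCE1}. If $v_n:=u_n-u\not\to 0$, the nonlocal concentration-compactness principle combined with Proposition~\ref{ExFu} forces $I_\infty(v_n)\ge c^*+o(1)$, which together with $I(u)\ge 0$ contradicts $c<c^*$; hence $u_n\to u$ strongly in $E$ and $u\not\equiv 0$ because $I(u)=c>0$. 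The main obstacle is precisely the level estimate, since the double convolution couples the critical and subcritical terms and the sign-changing potential must be simultaneously controlled on the concentration scale.
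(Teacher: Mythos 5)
Your setup, the mountain-pass geometry, and the final weak-limit extraction step all match the paper in spirit, but the crucial level estimate is where your proposal goes in a genuinely different --- and, here, unworkable --- direction. You propose to follow the Br\'ezis--Nirenberg bubble expansion: insert a truncated extremal $U_{\delta,z}$, expand $\max_t I(tU_{\delta,z})$ as $\delta\to 0^+$, and show that the subcritical gain beats the $V$-contribution. The paper explicitly explains (end of the Introduction) that this is precisely what fails under hypothesis $(V)$: the potential is only assumed to lie in $L^{N/2}(\R^N)$ and may change sign, so $\int_{\R^N}V(x)U_{\delta,z}^2\,dx$ is of the same scaling order as the critical energy and has no controllable sign. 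Your suggestion to center at $z\in\R^N\setminus\Omega_{\tau_0}$ actually makes matters worse, since there $V>\tau_0>0$ and the $V$-term \emph{adds} to the energy. You also attribute the level estimate to the lower bound $p>(2N-\mu)/N$; a scaling count of the convolution term $\iint |U_\delta(x)|^p|U_\delta(y)|^p|x-y|^{-\mu}\,dx\,dy\sim\delta^{N-\mu/2-(N-2)p/2}$ shows that a Br\'ezis--Nirenberg-type comparison with the $V$-loss would impose a \emph{stronger} lower bound on $p$ than the one assumed in the theorem, so the bubble expansion cannot cover the full range.

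What the paper does instead is an elementary scaling argument that sidesteps bubbles entirely. It first shows
\[
\inf\left\{\int_{\R^N}|\nabla\varphi|^2\,dx:\ \varphi\in\mathcal{C}_0^\infty(\R^N),\ \int_{\R^N}\!\!\int_{\R^N}\frac{|\varphi(x)|^p|\varphi(y)|^p}{|x-y|^\mu}\,dx\,dy=1\right\}=0
\]
by the rescaling $\varphi_t(x)=t^{(2N-\mu)/(2p)}\varphi(tx)$, which preserves the convolution constraint while $\int|\nabla\varphi_t|^2=t^{(2N-\mu)/p-(N-2)}\int|\nabla\varphi|^2\to 0$ as $t\to 0^+$; this is where the assumption $p<(2N-\mu)/(N-2)$ enters, and the lower bound $(2N-\mu)/N<p$ is only there for HLS well-definedness and boundedness of PS sequences. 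Given such $\varphi_\delta$ with small gradient, the potential term is bounded via H\"older and $V\in L^{N/2}$ by $CS\int|\nabla\varphi_\delta|^2$, so the whole quadratic part is small while the subcritical convolution is fixed at $1$; maximizing $t\mapsto t^2\cdot\text{small}-t^{2p}\cdot\text{const}$ gives a mountain-pass value that can be made arbitrarily small. This beats the threshold $c_0$ of Proposition \ref{F1}, which --- another point your sketch omits --- is the minimum of two quantities: the one-bubble energy $\frac{N+2-\mu}{2(2N-\mu)}S_{H,L}^{(2N-\mu)/(N-\mu+2)}$ (controlling concentration at finite points) \emph{and} a second quantity $\alpha'S\Lambda_0^{2/2^\ast}$ arising from the possible escape of mass to infinity, which must be ruled out separately via Lemma \ref{Concentration-compactness principle2} and the structure condition $V\geq\tau_0$ outside a set of finite measure. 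Your proposal only accounts for the first threshold. The rest of your outline (Br\'ezis--Lieb splitting, identifying the weak limit as a critical point, concluding $u\not\equiv 0$ from $c>0$) is consistent with the paper.
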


On the other hand, we are interested in the existence of high energy solution for the critical Choquard equation. In the famous paper \cite{BC1}, Benci and Cerami considered the following problem
\begin{equation}\label{local.S1}
-\Delta u+V(x)u=|u|^{2^{\ast}-2}u,\hspace{4.14mm} \mbox{in}\ \mathbb{R}^N,
\end{equation}
where the potential $V(x)$ satisfies $(V_1)$, $(V_2)$ below and
$(V_3')$
$$
|V(x)|_{L^{N/2}}<S(2^{\frac{2}{N}}-1).
$$
They developed some global compactness lemma and proved that the problem \eqref{local.S1} has at least one positive high energy solution. Here we are quite interested if the same result still holds for the nonlocal Choquard equation
\begin{equation}\label{CE2}
\left\{\begin{array}{l}
\displaystyle-\Delta u+ V(x)u
=\left(\int_{\mathbb{R}^N}\frac{|u(y)|^{2_{\mu}^{\ast}}}{|x-y|^{\mu}}dy\right)
|u|^{2_{\mu}^{\ast}-2}u\hspace{4.14mm}\mbox{in}\hspace{1.14mm} \mathbb{R}^N,\\
\displaystyle u\in D^{1,2}(\mathbb{R}^N),\hspace{10.6mm}
\end{array}
\right.
\end{equation}
here $0<\mu<N$, $N\geq3$, $2_{\mu}^{\ast}=(2N-\mu)/(N-2)$  and the potential $V$ satisfies the assumptions

$(V_1)$ $V\in \mathcal{C}(\mathbb{R}^N,\mathbb{R})$, $V\geq\nu>0$ in a neighborhood of 0.

$(V_2)$ $\exists p_{1}<\frac{N}{2},p_{2}>\frac{N}{2}$ and for $N=3$, $p_{2}<3$, such that
$$
V(x)\in L^{p},\ \ \ \forall p\in[p_{1},p_{2}].
$$

$(V_3)$
$$
|V(x)|_{L^{N/2}}<C(N,\mu)^{\frac{N-2}{2N-\mu}}S_{H,L}(2^{\frac{N+2-\mu}{2N-\mu}}-1),
$$
where $S_{H,L}$ is defined in \eqref{S1} and $C(N,\mu)$ is given in Proposition \ref{HLS}. Under these assumptions, we have
\begin{thm}\label{EXS}
Suppose that assumptions $(V_1)$, $(V_2)$ and $(V_3)$ hold, $0<\mu<\min\{4,N\}$ and $N\geq3$. Then equation \eqref{CE2} has at least one nontrivial solution $u$.
\end{thm}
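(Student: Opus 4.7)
The plan is to adapt Benci and Cerami's linking/min-max scheme from \cite{BC1} to the nonlocal Choquard setting. Work on $D^{1,2}(\R^N)$ with the functional
\[
I(u)=\frac{1}{2}\int_{\R^N}\bigl(|\nabla u|^{2}+V(x)u^{2}\bigr)dx-\frac{1}{2\cdot 2_{\mu}^{\ast}}\iint_{\R^{2N}}\frac{|u(x)|^{2_{\mu}^{\ast}}|u(y)|^{2_{\mu}^{\ast}}}{|x-y|^{\mu}}dx\,dy,
\]
which by Proposition \ref{HLS} together with $(V_1),(V_2)$ is well-defined and $C^{1}$. From Proposition \ref{ExFu} one has $S=C(N,\mu)^{(N-2)/(2N-\mu)}S_{H,L}$, and since $(N+2-\mu)/(2N-\mu)<1$ for $N\ge3$ the factor $2^{(N+2-\mu)/(2N-\mu)}-1$ appearing in $(V_3)$ is less than $1$, so $|V|_{L^{N/2}}<S$. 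Consequently, H\"older and Sobolev make the quadratic part of $I$ equivalent to $\|\cdot\|^{2}$, and $I$ has mountain-pass geometry on the Nehari manifold $\mathcal N$. Denote by $m_{\infty}:=\tfrac{N+2-\mu}{2(2N-\mu)}\,S_{H,L}^{(2N-\mu)/(N+2-\mu)}$ the ground-state level of the autonomous limit equation \eqref{CCE1}, attained by the family $U_{\delta,z}$.

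The first main ingredient is a global compactness (profile decomposition) lemma for \eqref{CE2}: any $(PS)_{c}$ sequence $\{u_{n}\}\subset D^{1,2}(\R^N)$ admits, up to subsequence, a decomposition
\[
u_{n}=u_{0}+\sum_{j=1}^{k}(\delta_{n}^{j})^{-(N-2)/2}\,U_{1,0}\!\left(\frac{\,\cdot-z_{n}^{j}\,}{\delta_{n}^{j}}\right)+o(1)\quad\text{in }D^{1,2}(\R^N),
\]
where $u_{0}$ is a (possibly trivial) solution of \eqref{CE2}, the bubbles are pairwise decoupled in the Struwe sense, and $c=I(u_{0})+k\,m_{\infty}$. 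The proof follows the classical Struwe iteration, but the genuinely new work is in the nonlocal double integral: it requires a Brezis--Lieb type identity for the Choquard term (cf.\ \cite{GY,GY3}) together with careful estimates for the cross interactions between $u_{0}$, the concentrating bubbles and the potential term $\int V u_{n}^{2}$. The restriction $\mu<4$ enters precisely to guarantee enough integrability to make these nonlocal cross terms vanish at the correct rate.

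The second main ingredient is a min-max level $c^{\ast}\in(m_{\infty},\,2m_{\infty})$. Build a linking structure out of the one-bubble family $y\mapsto t(y)\,U_{1,\sigma(y)}$ with $\sigma\colon\overline{B_{1}}\to\R^N\cup\{\infty\}$ mapping $\partial B_{1}$ to infinity and $t(y)>0$ the Nehari projector, and define
\[
c^{\ast}:=\inf_{h\in\Gamma}\max_{y\in\overline{B_{1}}}I(h(y)),\qquad \Gamma:=\{h\in C(\overline{B_{1}},\mathcal N):h|_{\partial B_{1}}=t(\cdot)\,U_{1,\sigma(\cdot)}|_{\partial B_{1}}\}.
\]
The strict upper bound $c^{\ast}<2m_{\infty}$ reduces to evaluating $I$ on a single projected bubble: an elementary computation gives
\[
I\bigl(t(y)\,U_{1,\sigma(y)}\bigr)\le m_{\infty}\bigl(1+|V|_{L^{N/2}}/S\bigr)^{(2N-\mu)/(N+2-\mu)},
\]
and $(V_3)$ is exactly the quantitative statement that the right-hand side is strictly less than $2m_{\infty}$. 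The matching lower bound $c^{\ast}>m_{\infty}$ comes from a topological-degree argument with the barycenter map $\beta(u):=\int_{\R^N} x\,|u|^{2_{\mu}^{\ast}}dx/|u|_{2_{\mu}^{\ast}}^{2_{\mu}^{\ast}}$: every $h\in\Gamma$ satisfies $\deg(\beta\circ h,\partial B_{1},0)\ne 0$, so $h$ must hit some $u_{\star}=h(y_{0})$ with $\beta(u_{\star})=0$, and the compactness lemma forbids $c^{\ast}=m_{\infty}$ because a minimizing sequence at that level would concentrate into a single bubble with barycenter escaping to infinity, incompatible with $\beta(u_{\star})=0$.

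Putting the two together, a standard deformation argument on $\mathcal N$ produces a $(PS)_{c^{\ast}}$ sequence, and the compactness lemma applied at $c^{\ast}\in(m_{\infty},2m_{\infty})$ rules out the pure-bubbling alternative $c^{\ast}=k\,m_{\infty}$ with $k\in\N$; hence the weak limit $u_{0}$ is a nontrivial solution of \eqref{CE2}. The main obstacle is the compactness lemma itself: the Hardy--Littlewood--Sobolev double integral couples $u_{0}$, the bubbles, and $V$ through genuinely nonlocal cross terms that have no counterpart in the local analysis of \cite{BC1}, and the restriction $\mu<\min\{4,N\}$ is essentially forced by the need to control these cross terms. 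A secondary subtle point is the sharp single-bubble energy estimate above, whose constant $2^{(N+2-\mu)/(2N-\mu)}-1$ arising in $(V_3)$ differs from the local exponent $2^{2/N}-1$ and must be tracked through the explicit form of $U_{\delta,z}$ given by \eqref{REL}.
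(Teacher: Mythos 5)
Your proposal follows essentially the same route as the paper: a nonlocal global compactness (Struwe-type profile decomposition) lemma for $J_V$-Palais--Smale sequences combined with the Benci--Cerami linking construction and a barycenter map, producing a min-max level strictly between the ground-state energy $m_\infty$ and $2m_\infty$ where compactness is recovered. The paper works equivalently with the constraint set $\mathcal{M}=\{u:\|u\|_{NL}=1\}$ and the quadratic form $P(u)=\int(|\nabla u|^2+Vu^2)$ rather than on the Nehari manifold; the translation is the scaling $w_n=P(u_n)^{(N-2)/(2(N+2-\mu))}u_n$ which converts $(PS)_c$ sequences for $P|_{\mathcal{M}}$ with $c\in(S_{H,L},2^{(N+2-\mu)/(2N-\mu)}S_{H,L})$ into $(PS)$ sequences for $J_V$ at levels in $(m_\infty,2m_\infty)$. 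Your single-bubble energy computation $I(t(y)U_{1,\sigma(y)})\le m_\infty(1+|V|_{N/2}/S)^{(2N-\mu)/(N+2-\mu)}$ and the identification $S=C(N,\mu)^{(N-2)/(2N-\mu)}S_{H,L}$ correctly reproduce the role of $(V_3)$.

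There is, however, one concrete defect: the barycenter you propose, $\beta(u)=\int_{\R^N}x\,|u|^{2_\mu^\ast}dx/|u|_{2_\mu^\ast}^{2_\mu^\ast}$, is not well-defined on $D^{1,2}(\R^N)$ for the whole admissible range of $\mu$. Already for the minimizers $U_{\delta,z}$ one has $|U_{\delta,z}(x)|^{2_\mu^\ast}\sim |x|^{-(2N-\mu)}$ at infinity, so $x\,|U_{\delta,z}|^{2_\mu^\ast}\sim |x|^{1-(2N-\mu)}$, which is integrable only when $\mu<N-1$; in particular the moment integral diverges when $N=3$ and $2\le\mu<3$, or when $N=4$ and $3\le\mu<4$, both of which lie inside the hypothesis $0<\mu<\min\{4,N\}$. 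The degree-theoretic argument on $\partial B_1$ therefore cannot even be set up with this $\beta$. The paper (following Benci--Cerami) avoids this by using the bounded-weight barycenter
\[
\beta(u)=\frac{1}{S_{H,L}}\int_{\R^N}\frac{x}{|x|}\,|\nabla u|^2\,dx,\qquad
\gamma(u)=\frac{1}{S_{H,L}}\int_{\R^N}\phi(x)\,|\nabla u|^2\,dx,
\]
so that the integrand is dominated by $|\nabla u|^2\in L^1$, and the linking set is $\mathcal{A}=\{u\in\mathcal{M}:\alpha(u)=(0,\tfrac12)\}$ with $\alpha=(\beta,\gamma)$. If you replace your barycenter with this bounded-weight version (and correspondingly replace the escape-to-infinity heuristic by the quantitative properties of $\gamma$ in the paper's Proposition \ref{Pb3}), the rest of your argument goes through and matches the paper's proof.
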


An outline of this paper is as follow: In Section 2, we prove a version of Concentration-Compactness principle for the nonlocal type problem which complements the results in \cite{Ls1, BCS, BTW}. After that we can use the compactness lemma to prove that the $(PS)$ condition still holds below some criteria level and obtain the existence of solutions by Mountain-Pass Theorem. In Section 3, we prove a version of global compactness lemma for the nonlocal Choquard equation and then we show the existence of high energy solution for \eqref{CE2} following the linking arguments in \cite{BC1}.

\section{Mountain-Pass solution}
In this section we will study the existence of solutions for equation \eqref{CCe1} under assumption $(V)$. To prove the existence of solutions by variational methods, we introduce the Hilbert spaces
\[
E:=\left\{u\in H^1(\R^N): \, \int_{\R^N}V_{+}(x)u^2dx<\infty\right\}
\]
with inner products
\[
(u, v):=\int_{\R^N}\big(\nabla u\nabla v+V_{+}(x)uv\big)dx
\]
and the associated norms
\[\|u\|^2_V=(u,u).
\]
 Obviously, $E$ embeds continuously in $H^1(\R^N)$ (see \cite{DL}). Moreover,
 \begin{lem}([\cite{Z}, Lemma 2.3])\label{F0}
There exist $C_1,C_2 > 0$ depending only on the structural constants
such that
\begin{equation}\label{p0}
C_1\|u\|_{H^{1}}^{2}\leq C_2\|u\|_{V}^{2}\leq\int_{\mathbb{R}^N}(|\nabla u|^{2}+V(x)|u|^{2})dx\leq\|u\|_{V}^{2}, \ u\in E.
\end{equation}
\end{lem}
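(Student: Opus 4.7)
The plan is to verify the three inequalities in \eqref{p0} in turn, using only assumption $(V)$ together with the Sobolev inequality $S|u|_{2^*}^2\le|\nabla u|_2^2$. The argument is a classical two-weight/Sobolev manipulation and should not need anything more than the Hardy--Littlewood--Sobolev setup already introduced.

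First, the rightmost bound is immediate from $V(x)=V_+(x)-V_-(x)\le V_+(x)$:
$$\int_{\R^N}(|\nabla u|^2+V(x)u^2)\,dx=\|u\|_V^2-\int_{\R^N}V_-(x)u^2\,dx\le\|u\|_V^2.$$
For the middle inequality I would bound the negative part by H\"older and Sobolev embedding,
$$\int_{\R^N}V_-(x)u^2\,dx\le|V_-|_{N/2}\,|u|_{2^*}^2\le\frac{V_0}{S}|\nabla u|_2^2\le\frac{V_0}{S}\|u\|_V^2,$$
and substitute back into the identity above. The hypothesis $V_0<S$ in $(V)$ is precisely what guarantees that $C_2:=1-V_0/S>0$ is admissible.

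The leftmost inequality is the only nontrivial step and is where the finite-measure part of $(V)$ enters, so this is the main obstacle: the $L^2$-mass of $u$ is not controlled pointwise by the weight $V_+$. I would split
$$\int_{\R^N}u^2\,dx=\int_{\Omega_{\tau_0}}u^2\,dx+\int_{\R^N\setminus\Omega_{\tau_0}}u^2\,dx.$$
On $\R^N\setminus\Omega_{\tau_0}$ the pointwise inequality $V_+(x)=V(x)>\tau_0$ gives $\int_{\R^N\setminus\Omega_{\tau_0}}u^2\le\tau_0^{-1}\|u\|_V^2$, while on the finite-measure set $\Omega_{\tau_0}$ H\"older followed by Sobolev yields
$$\int_{\Omega_{\tau_0}}u^2\,dx\le|\Omega_{\tau_0}|^{2/N}|u|_{2^*}^2\le\frac{|\Omega_{\tau_0}|^{2/N}}{S}\|u\|_V^2.$$
Adding these two and combining with $|\nabla u|_2^2\le\|u\|_V^2$ produces $\|u\|_{H^1}^2\le K\|u\|_V^2$ with an explicit $K=K(\tau_0,|\Omega_{\tau_0}|,S)$, after which taking $C_1:=C_2/K$ closes the chain and finishes the proof.
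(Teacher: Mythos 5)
Your argument is correct. The paper does not reproduce a proof of this lemma; it simply cites \cite{Z}, Lemma~2.3, and the argument you give is exactly the standard one that such references use: the rightmost inequality is just $V\le V_+$, the middle one absorbs $\int V_- u^2$ into $\|u\|_V^2$ via H\"older ($L^{N/2}$ against $L^{N/(N-2)}$) and Sobolev, using $V_0<S$ to keep $C_2=1-V_0/S>0$, and the leftmost controls $|u|_2^2$ by splitting over $\Omega_{\tau_0}$ and its complement, using the finite measure of $\Omega_{\tau_0}$ together with H\"older--Sobolev on one piece and the pointwise bound $V_+=V>\tau_0$ on the other. All exponents and the final choice $C_1=C_2/K$ with $K=1+\tau_0^{-1}+|\Omega_{\tau_0}|^{2/N}/S$ check out.
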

Denote
$$
\|u\|_{NL}:=\left(\int_{\mathbb{R}^N}\int_{\mathbb{R}^N}\frac{|u(x)|^{2_{\mu}^{\ast}}|u(y)|^{2_{\mu}^{\ast}}}
{|x-y|^{\mu}}dxdy\right)^{\frac{1}{2\cdot2_{\mu}^{\ast}}},
$$
the following splitting Lemma was proved in Lemma 2.2 of \cite{GY}.
\begin{lem} \label{BLN}Let $N\geq3$ and $0<\mu<N$. If $\{u_{n}\}$ is a bounded sequence in $L^{\frac{2N}{N-2}}(\mathbb{R}^N)$ such that $u_{n}\rightarrow u$ almost everywhere in $\mathbb{R}^N$ as $n\rightarrow\infty$, then the following hold,
$$
\|u_{n}\|_{NL}^{2\cdot2_{\mu}^{\ast}}
-\|u_{n}-u\|_{NL}^{2\cdot2_{\mu}^{\ast}}\rightarrow\|u\|_{NL}^{2\cdot2_{\mu}^{\ast}}
$$
as $n\rightarrow\infty$.
\end{lem}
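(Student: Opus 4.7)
The plan is to reduce the nonlocal Br\'{e}zis--Lieb identity to the classical (pointwise) Br\'{e}zis--Lieb lemma combined with the continuity of the HLS bilinear form. Set $p:=2_{\mu}^{\ast}=(2N-\mu)/(N-2)$ and $r:=2N/(2N-\mu)$, and introduce
$$B(f,g):=\iint_{\R^N\times\R^N}\frac{f(x)\,g(y)}{|x-y|^\mu}\,dx\,dy,$$
so that $\|u\|_{NL}^{2\cdot 2_{\mu}^{\ast}}=B(|u|^p,|u|^p)$. By Proposition \ref{HLS} the form $B$ is bilinear, symmetric and continuous on $L^r(\R^N)\times L^r(\R^N)$; since $|u|^p\in L^r(\R^N)$ if and only if $u\in L^{2^*}(\R^N)$, the hypothesis forces $(|u_n|^p)$ to be bounded in $L^r(\R^N)$.

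The first step is a generalized Br\'{e}zis--Lieb lemma applied to the powers $|u_n|^p$: starting from the pointwise inequality $\bigl||a+b|^p-|a|^p-|b|^p\bigr|\le\varepsilon\,|b|^p+C_\varepsilon|a|^p$ and a dominated convergence trick, one obtains from $u_n\to u$ a.e.\ and the $L^{2^*}=L^{pr}$ boundedness that
$$\phi_n:=|u_n|^p-|u_n-u|^p-|u|^p\longrightarrow 0\quad\text{in }L^r(\R^N).$$
Setting $v_n:=u_n-u$ and writing $|u_n|^p=|v_n|^p+|u|^p+\phi_n$, the bilinearity and symmetry of $B$ produce the algebraic identity
$$B(|u_n|^p,|u_n|^p)-B(|v_n|^p,|v_n|^p)=2B(|v_n|^p,|u|^p)+2B(|v_n|^p,\phi_n)+B(|u|^p+\phi_n,|u|^p+\phi_n).$$
Boundedness of $(|v_n|^p)$ in $L^r$ and the $L^r$-continuity of $B$ then immediately yield $B(|v_n|^p,\phi_n)\to 0$ and $B(|u|^p+\phi_n,|u|^p+\phi_n)\to B(|u|^p,|u|^p)$.

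The remaining cross term $B(|v_n|^p,|u|^p)$ is the main obstacle. I would proceed via weak convergence: since $v_n\to 0$ a.e.\ and $(|v_n|^p)$ is bounded in the reflexive space $L^r(\R^N)$, the standard a.e.-to-weak principle gives $|v_n|^p\rightharpoonup 0$ in $L^r(\R^N)$. Rewriting
$$B(|v_n|^p,|u|^p)=\int_{\R^N}|v_n(x)|^p\,\bigl(I_\mu\ast|u|^p\bigr)(x)\,dx,\qquad I_\mu(x):=|x|^{-\mu},$$
and invoking the Hardy--Littlewood--Sobolev inequality once more, the Riesz potential $I_\mu\ast|u|^p$ belongs to $L^{r'}(\R^N)$ with $r'=2N/\mu$ being the dual exponent of $r$. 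Consequently the above integral tends to zero by the $L^r$-weak convergence of $|v_n|^p$. Combining the three limits with the expansion above delivers the desired Br\'{e}zis--Lieb identity for $\|\cdot\|_{NL}^{2\cdot 2_{\mu}^{\ast}}$.
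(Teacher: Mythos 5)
The paper does not give its own proof of this lemma; it simply invokes Lemma~2.2 of \cite{GY}. Your argument is correct and complete once the sketched steps are filled in, and it is essentially the standard route for the Choquard Br\'ezis--Lieb identity. With $p = 2_{\mu}^{\ast}$ and $r = 2N/(2N-\mu)$ one has $pr = 2^{\ast}$, so boundedness of $(u_n)$ in $L^{2^{\ast}}(\R^N)$ puts $(|u_n|^p)$ in a bounded subset of $L^r(\R^N)$; the refined Br\'ezis--Lieb splitting, obtained from the pointwise inequality $\bigl||a+b|^p-|a|^p-|b|^p\bigr|\le\varepsilon|b|^p+C_\varepsilon|a|^p$ together with a dominated-convergence truncation, then yields $\phi_n = |u_n|^p - |u_n - u|^p - |u|^p \to 0$ in $L^r(\R^N)$. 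The bilinearity, symmetry and $L^r\times L^r$-continuity of $B$ (which follow from Proposition~\ref{HLS}) reduce everything to the single cross term $B(|v_n|^p,|u|^p)$, and your treatment of it is sound: boundedness in $L^r$ plus a.e.\ convergence to zero gives $|v_n|^p\rightharpoonup 0$ weakly in $L^r(\R^N)$, while the Riesz-potential mapping property from the Hardy--Littlewood--Sobolev inequality places $I_\mu\ast|u|^p$ in the dual space $L^{r'}(\R^N)$ with $r'=2N/\mu$, so the pairing tends to zero. The only thing to make explicit in a final write-up is the $L^r$-valued Br\'ezis--Lieb step, since the classical Br\'ezis--Lieb lemma is stated for $L^1$ convergence of $|u_n|^q-|u_n-u|^q-|u|^q$ and the version you need (convergence of the $p$-th powers in $L^{q/p}$) requires the extra truncation argument you gesture at.
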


To study the problem variationally, we introduce the energy functional associated to equation \eqref{CCe1} by
$$
J(u)=\frac{1}{2}\int_{\mathbb{R}^N}(|\nabla u|^{2}+V(x)|u|^{2})dx-\frac{1}{2\cdot2_{\mu}^{\ast}}\int_{\mathbb{R}^N}
\int_{\mathbb{R}^N}\frac{(|u(x)|^{2_{\mu}^{\ast}}+|u(x)|^{p})(|u(y)|^{2_{\mu}^{\ast}}+|u(y)|^{p})}
{|x-y|^{\mu}}dxdy.
$$
The Hardy--Littlewood--Sobolev inequality implies that $J$ is well defined on $E$ and belongs to $\mathcal{C}^{1}$ with
$$\aligned
\langle J'(u),\varphi\rangle=&\int_{\mathbb{R}^N}(\nabla u\nabla\varphi+V(x)u\varphi)dx\\
&-\int_{\mathbb{R}^N}
\int_{\mathbb{R}^N}\frac{(|u(x)|^{2_{\mu}^{\ast}}+|u(x)|^{p})(|u(y)|^{2_{\mu}^{\ast}-2}u(y)\varphi(y)
+\frac{p}{2_{\mu}^{\ast}}|u(y)|^{p-2}u(y)\varphi(y))}
{|x-y|^{\mu}}dxdy.
\endaligned$$
So $u$ is a weak solution of \eqref{CCe1} if and only if $u$ is a critical point of the functional $J$.

\subsection{Concentration-compactness principle}
To describe the lack of compactness of the injection from $D^{1,2}(\mathbb{R}^N)$ to $L^{2^*}(\mathbb{R}^N)$,  P.L. Lions established the well known Concentration-compactness principles  \cite{Ls1, Ls2, Ls3, Ls4}. Here we would like to recall the second concentration-compactness principle \cite{Ls1} for the convenience of the readers.
\begin{lem}\label{Concentration-compactness principle}
Let $\{u_{n}\}$ be a bounded sequence in $D^{1,2}(\mathbb{R}^N)$ converging weakly and a.e. to some $u_0\in D^{1,2}(\mathbb{R}^N)$. $|\nabla u_{n}|^{2}\rightharpoonup \omega$, $|u_{n}|^{2^*}\rightharpoonup \zeta$ weakly in the sense of measures where $\omega$ and $\zeta$ are bounded non-negative measures on $\mathbb{R}^N$. Then we have:\\
(1) there exists some at most countable set $I$, a family $\{z_i:i\in I\}$ of distinct points in $\mathbb{R}^N$, and a family $\{\zeta_i:i\in I\}$ of positive numbers such that
$$
\zeta=|u_0|^{2^*}+\sum_{i\in I}\zeta_i\delta_{z_i},
$$
where $\delta_{x}$ is the Dirac-mass of mass 1 concentrated at $x\in\mathbb{R}^N$.\\
(2) In addition we have
$$
\omega\geq|\nabla u_0|^{2}+\sum_{i\in I}\omega_i\delta_{z_i}
$$
for some family $\{\omega_i:i\in I\}$, $\omega_i>0$ satisfying
$$
S\zeta_i^{\frac{2}{2^*}}\leq\omega_i,\ \ \mbox{for all } i\in I.
$$
In particular, $\sum_{i\in I}\zeta_i^{\frac{2}{2^*}}<\infty$.
\end{lem}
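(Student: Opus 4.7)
My plan is to first reduce to the case where the weak limit vanishes, by replacing $u_n$ with $v_n := u_n - u_0$. The Brezis--Lieb lemma gives $|u_n|^{2^*} - |v_n|^{2^*} - |u_0|^{2^*} \to 0$ in $L^1(\R^N)$, hence $|v_n|^{2^*} \rightharpoonup \tilde\zeta := \zeta - |u_0|^{2^*}$ in the sense of measures, and in particular $\zeta - |u_0|^{2^*}$ is a nonnegative measure. Similarly, since $\nabla u_n \rightharpoonup \nabla u_0$ in $L^2(\R^N)$, a standard lower-semicontinuity argument for the weak topology of measures yields $\omega \geq |\nabla u_0|^2 + \tilde\omega$, where (up to a subsequence) $|\nabla v_n|^2 \rightharpoonup \tilde\omega$. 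Thus it suffices to prove the atomic decomposition for the pair $(\tilde\zeta,\tilde\omega)$ attached to a sequence with $v_n\rightharpoonup 0$.

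For an arbitrary $\varphi\in C_c^\infty(\R^N)$ I apply the Sobolev inequality to $\varphi v_n$:
$$S\Big(\int_{\R^N}|\varphi v_n|^{2^*}\,dx\Big)^{2/2^*} \leq \int_{\R^N}|\nabla(\varphi v_n)|^2\,dx.$$
Expanding the right-hand side and using Rellich--Kondrachov compactness ($v_n\to 0$ strongly in $L^2_{loc}$), the cross term $2\int \varphi\,\nabla\varphi\cdot\nabla v_n\,v_n\,dx$ and the lower-order term $\int |\nabla\varphi|^2 v_n^2\,dx$ both vanish as $n\to\infty$. Passing to the limit yields the reverse H\"older-type inequality
$$S\Big(\int_{\R^N}|\varphi|^{2^*}\,d\tilde\zeta\Big)^{2/2^*} \leq \int_{\R^N}|\varphi|^2\,d\tilde\omega \qquad \text{for every } \varphi\in C_c^\infty(\R^N).$$

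The main obstacle, and the heart of the argument, is the measure-theoretic step of concluding from this inequality that $\tilde\zeta$ is purely atomic with at most countably many atoms. I would invoke the classical lemma: if $\nu,\mu$ are finite nonnegative Borel measures on $\R^N$ satisfying $\bigl(\int|\varphi|^{2^*}d\nu\bigr)^{1/2^*}\leq C\bigl(\int|\varphi|^2 d\mu\bigr)^{1/2}$ for all $\varphi\in C_c^\infty$, then there exist an at most countable set $\{z_i\}\subset\R^N$ and weights $\nu_i>0$ with $\nu=\sum_i \nu_i\delta_{z_i}$ and $\mu \geq C^{-2}\sum_i \nu_i^{2/2^*}\delta_{z_i}$. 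The proof of this lemma combines a H\"older estimate to force $\nu$ to be absolutely continuous with respect to $\mu$, with the observation that (because $2^*>2$) the Radon--Nikodym density cannot support a diffuse part, and testing the inequality against bump functions supported in shrinking balls isolates the mass at each atom. Applying this lemma to $(\nu,\mu,C)=(\tilde\zeta,\tilde\omega,S^{-1/2})$ gives the atomic decomposition $\tilde\zeta = \sum_i \zeta_i\delta_{z_i}$ and $\tilde\omega \geq \sum_i \omega_i\delta_{z_i}$ with $S\zeta_i^{2/2^*}\leq \omega_i$; combining with $\omega \geq |\nabla u_0|^2 + \tilde\omega$ yields the claimed lower bound for $\omega$, and the summability $\sum_i \zeta_i^{2/2^*}<\infty$ follows from the finiteness of $\tilde\omega(\R^N)$.
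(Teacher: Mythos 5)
The paper does not actually prove Lemma~\ref{Concentration-compactness principle}: it is recalled as a known result and attributed to Lions \cite{Ls1} ``for the convenience of the readers.'' Your argument reproduces essentially Lions' original proof, and it is correct. The reduction to $v_n:=u_n-u_0\rightharpoonup0$ via Brezis--Lieb is the standard opening step; one small imprecision is that you invoke ``lower semicontinuity'' to conclude $\omega\geq|\nabla u_0|^2+\tilde\omega$, whereas in fact expanding $|\nabla u_n|^2=|\nabla v_n|^2+2\nabla v_n\cdot\nabla u_0+|\nabla u_0|^2$ and using $\nabla v_n\rightharpoonup0$ in $L^2$ gives the equality $\omega=|\nabla u_0|^2+\tilde\omega$ (along the subsequence on which $|\nabla v_n|^2$ converges). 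This is harmless since only the inequality is used. Localizing the Sobolev inequality with $\varphi\in C_c^\infty$, killing the lower-order and cross terms by Rellich--Kondrachov, and then invoking the measure-theoretic Lemma~1.2 of \cite{Ls1} to obtain the atomic decomposition is precisely the route the paper itself follows when proving the nonlocal Choquard analogue, Lemma~\ref{CCP1}; your sketch of why that auxiliary lemma holds (absolute continuity $\tilde\zeta\ll\tilde\omega$ plus the fact that $2^*>2$ forces the Radon--Nikodym density to vanish off the atoms of $\tilde\omega$) is also accurate, and the summability $\sum_i\zeta_i^{2/2^*}<\infty$ does indeed follow from $\sum_i\omega_i\leq\tilde\omega(\R^N)<\infty$.
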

The second concentration-compactness principle, roughly
speaking, is only concerned with a possible concentration of a weakly convergent sequence at finite points and it does not provide any information about the loss
of mass of a sequence at infinity. The following concentration-compactness principle at infinity was developed by Chabrowski\cite{C}, J. Bianchi, Chabrowski, Szulkin \cite{BCS}, Ben-Naoum, Troestler, Willem \cite{BTW} which provided some quantitative information about the loss of mass of a sequence at infinity.

\begin{lem}\label{Concentration-compactness principle2}
Let $\{u_{n}\}\subset D^{1,2}(\mathbb{R}^N)$ be a sequence in Lemma \ref{Concentration-compactness principle} and define
$$
\omega_{\infty}:=\lim_{R\rightarrow\infty}\overline{\lim}_{n\rightarrow\infty}\int_{|x|\geq R}|\nabla u_{n}|^{2}dx,\ \ \
\zeta_{\infty}:=\lim_{R\rightarrow\infty}\overline{\lim}_{n\rightarrow\infty}\int_{|x|\geq R}| u_{n}|^{2^{\ast}}dx.
$$
Then it follows that
$$
S\zeta_{\infty}^{\frac{2}{2^{\ast}}}\leq \omega_{\infty},
$$
$$
\overline{\lim}_{n\rightarrow\infty}|\nabla u_{n}|_{2}^{2}=\int_{\mathbb{R}^N}d\omega+\omega_{\infty},
$$
$$
\overline{\lim}_{n\rightarrow\infty}|u_{n}|_{2^{\ast}}^{2^{\ast}}=\int_{\mathbb{R}^N}d\zeta+\zeta_{\infty}.
$$
\end{lem}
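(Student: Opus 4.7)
\textbf{Proof plan for Lemma \ref{Concentration-compactness principle2}.}

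The strategy is the standard cutoff-at-infinity argument. I would introduce a family of smooth truncations $\psi_R \in C^\infty(\R^N)$ with $0\le \psi_R\le 1$, $\psi_R(x)=0$ for $|x|\le R$, $\psi_R(x)=1$ for $|x|\ge R+1$, and $|\nabla\psi_R|\le C$ uniformly in $R$, and set $\phi_R=1-\psi_R$. Note that $\omega$ and $\zeta$ are finite measures because $\{u_n\}$ is bounded in $D^{1,2}(\R^N)$.

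For identity (2), I would start from the decomposition
\[
\int_{\R^N}|\nabla u_n|^2\,dx \;=\; \int_{\R^N}\phi_R|\nabla u_n|^2\,dx \;+\; \int_{\R^N}\psi_R|\nabla u_n|^2\,dx.
\]
Since $\phi_R\in C_c(\R^N)$, weak-* convergence of measures gives $\int \phi_R|\nabla u_n|^2\,dx \to \int\phi_R\,d\omega$ as $n\to\infty$. Therefore
\[
\overline{\lim}_{n\to\infty}\int_{\R^N}|\nabla u_n|^2\,dx \;=\; \int_{\R^N}\phi_R\,d\omega \;+\; \overline{\lim}_{n\to\infty}\int_{\R^N}\psi_R|\nabla u_n|^2\,dx.
\]
Sandwiching $\int \psi_R|\nabla u_n|^2\,dx$ between $\int_{|x|\ge R+1}|\nabla u_n|^2\,dx$ and $\int_{|x|\ge R}|\nabla u_n|^2\,dx$ and letting $R\to\infty$ (the first term converges to $\int d\omega$ by dominated convergence for the finite measure $\omega$, and the second converges to $\omega_\infty$ by the very definition) yields $\overline{\lim}_{n\to\infty}|\nabla u_n|_2^2=\int d\omega+\omega_\infty$. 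The proof of (3) is the same with $\zeta$ in place of $\omega$ and exponent $2^\ast$ in place of $2$.

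For the Sobolev-type estimate $S\zeta_\infty^{2/2^\ast}\le \omega_\infty$, I would test the Sobolev inequality against $\psi_R u_n\in D^{1,2}(\R^N)$:
\[
S\Big(\int_{\R^N}|\psi_R u_n|^{2^\ast}dx\Big)^{2/2^\ast}\le \int_{\R^N}|\nabla(\psi_R u_n)|^2\,dx.
\]
Expanding the right-hand side gives
\[
\int_{\R^N}\psi_R^2|\nabla u_n|^2\,dx + 2\int_{\R^N}\psi_R u_n\,\nabla\psi_R\cdot\nabla u_n\,dx + \int_{\R^N}u_n^2|\nabla\psi_R|^2\,dx .
\]
Because $\nabla\psi_R$ is supported in the annulus $R\le|x|\le R+1$, and $u_n\to u_0$ strongly in $L^2_{loc}(\R^N)$ by Rellich--Kondrachov (the embedding $D^{1,2}\hookrightarrow L^2_{loc}$ is compact), the third term converges as $n\to\infty$ to $\int u_0^2|\nabla\psi_R|^2\,dx$, which in turn vanishes as $R\to\infty$ because $u_0\in L^{2^\ast}(\R^N)$ and the annulus has bounded measure (H\"older with the fixed $|\nabla\psi_R|^2$). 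The cross term is controlled by Cauchy--Schwarz against the first and third terms and hence also vanishes in the double limit. On the left-hand side, $\int|\psi_R u_n|^{2^\ast}dx \ge \int_{|x|\ge R+1}|u_n|^{2^\ast}dx$. Passing to $\overline{\lim}_{n\to\infty}$ and then $R\to\infty$ yields the claimed $S\zeta_\infty^{2/2^\ast}\le\omega_\infty$.

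The only step that requires genuine care is controlling the two terms in $|\nabla(\psi_R u_n)|^2$ that involve $\nabla\psi_R$; this is where the local compact embedding is used and where the weak limit $u_0$ enters harmlessly. Everything else is an exercise in weak-* convergence of Radon measures against continuous compactly supported test functions, and the definitions of $\omega_\infty$ and $\zeta_\infty$.
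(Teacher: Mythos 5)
The paper does not prove Lemma \ref{Concentration-compactness principle2}; it cites it to Chabrowski \cite{C}, Bianchi--Chabrowski--Szulkin \cite{BCS}, and Ben-Naoum--Troestler--Willem \cite{BTW}. Your overall strategy is the correct one: cut off at infinity, pass to the limit against the Radon measures $\omega,\zeta$ for the compactly supported part, sandwich the escaping part between tails, and test the Sobolev inequality against $\psi_R u_n$. Your derivations of $\overline{\lim}_n|\nabla u_n|_2^2=\int d\omega+\omega_\infty$ and $\overline{\lim}_n|u_n|_{2^{\ast}}^{2^{\ast}}=\int d\zeta+\zeta_\infty$ are sound.

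The gap is in your treatment of the error term $\int_{\R^N}u_n^2|\nabla\psi_R|^2\,dx$ in the proof of $S\zeta_\infty^{2/2^{\ast}}\le\omega_\infty$. You choose $\psi_R$ with $|\nabla\psi_R|\le C$ uniformly and $\nabla\psi_R$ supported in the unit-width annulus $A_R=\{R\le|x|\le R+1\}$, and assert that $\int u_0^2|\nabla\psi_R|^2\,dx\to 0$ because ``$u_0\in L^{2^{\ast}}$ and the annulus has bounded measure.'' That is not true: $|A_R|\sim R^{N-1}\to\infty$. H\"older gives $\int_{A_R}u_0^2\le|A_R|^{2/N}\bigl(\int_{A_R}|u_0|^{2^{\ast}}\bigr)^{2/2^{\ast}}$, and $|A_R|^{2/N}\sim R^{2(N-1)/N}$ grows; it is not in general offset by the decay of the $L^{2^{\ast}}$ tail. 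Concretely, $u_0(x)=(1+|x|)^{-\alpha}$ with $\tfrac{N-2}{2}<\alpha\le\tfrac{N-1}{2}$ belongs to $D^{1,2}(\R^N)$ (hence to $L^{2^{\ast}}$), yet $\int_{A_R}u_0^2\sim R^{N-1-2\alpha}$ does not tend to $0$. Since your Cauchy--Schwarz bound on the cross term is controlled by the same quantity, both error terms break down. The standard remedy, used in the references the paper cites, is to take $\nabla\psi_R$ supported in the wide annulus $\{R\le|x|\le 2R\}$ with $|\nabla\psi_R|\le C/R$; then $\int u_0^2|\nabla\psi_R|^2\le\tfrac{C^2}{R^2}|A_R|^{2/N}\bigl(\int_{A_R}|u_0|^{2^{\ast}}\bigr)^{2/2^{\ast}}$, where now $R^{-2}|A_R|^{2/N}$ is bounded and the $L^{2^{\ast}}$ tail factor vanishes as $R\to\infty$. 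With this cutoff the cross term closes, and the rest of your argument goes through unchanged.
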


 The concentration-compactness principles \cite{Ls1, Ls2, Ls3, Ls4}
 help not only to investigate
 the behavior of the weakly convergent sequences in Sobolev spaces where
the lack of compactness occurs either due to the appearance of a critical Sobolev
exponent or due to the unboundedness of a domain and but also to find level sets of a given variational functional for which the
Palais-Smale condition holds. It was mentioned in the famous paper by P.L. Lions \cite{Ls1} that the limit embeddings
$$
\left(\int_{\mathbb{R}^N}\int_{\mathbb{R}^N}\frac{|u(x)|^{2_{\mu}^{\ast}}|u(y)|^{2_{\mu}^{\ast}}}
{|x-y|^{\mu}}dxdy\right)^{\frac{1}{2_{\mu}^{\ast}}}\leq C_0\int_{\mathbb{R}^N}|\nabla u|^{2}dx
$$
also cause the concentration of a weakly convergent sequence at finite points and the results in Lemma \ref{Concentration-compactness principle} holds with $|u_n|^{2^{\ast}}$ replaced by
$$
|u_n|^{2_{\mu}^{\ast}}\int_{\mathbb{R}^N}\frac{|u_n(y)|^{2_{\mu}^{\ast}}}
{|x-y|^{\mu}}dy.
$$
Moreover, a version of concentration-compactness principle corresponding to Lemma \ref{Concentration-compactness principle} was established in \cite{Ls2} to study the minimizing problem associated to the attainability of the best constant in the Hardy-Littlewood-Sobolev inequality of the form
$$
|\frac{1}{|x|^{\mu}}\ast u|_q\leq C_0 |u|_p
$$
 for some $C_0$ depending on $N, \mu, q, p$ where $0<\mu< N$ and $p, q$ satisfy
 $$
\frac1p+\frac{\mu}{n}=1+\frac1q.
 $$

In the present paper we are interested in the existence of solutions for the critical Choquard equation due to the Hardy--Littlewood--Sobolev inequality. Since the lack of compactness also occurs when people considers the critical Choquard equation in unbounded domain,  it is quite natural for people to turn to a possible use of the second concentration-compactness principle involving the convolution type nonlinearities. However, to the best knowledge of the authors, there seems no such existing lemmas that describe the
 possible concentration of a weakly convergent sequence both at finite points and at infinity.
 And there also seems no application of such a second concentration-compactness principle in studying the critical Choquard equation. Although the main idea is taken from \cite{Ls1, Ls2}, we would like to give a proof of it for readers's convenience.

\begin{lem}\label{CCP1} Let $\{u_{n}\}$ be a bounded sequence in $D^{1,2}(\mathbb{R}^N)$ converging weakly and a.e. to some $u_{0}$ and $\omega,  \omega_{\infty}, \zeta, \zeta_{\infty}$ be the bounded nonnegative measures in Lemma \ref{Concentration-compactness principle}  and Lemma \ref{Concentration-compactness principle2}. Assume that $$\Big(\displaystyle\int_{\mathbb{R}^N}\frac{|u_{n}(y)|^{2_{\mu}^{\ast}}}{|x-y|^{\mu}}dy\Big)| u_{n}(x)|^{2_{\mu}^{\ast}}\rightharpoonup \nu$$ weakly in the sense of measure where $\nu$ is a bounded positive measure on $\mathbb{R}^N$ and define
$$\aligned
\nu_{\infty}&:=\lim_{R\rightarrow\infty}\overline{\lim}_{n\rightarrow\infty}\int_{|x|\geq R}\Big(\int_{\mathbb{R}^N}\frac{|u_{n}(y)|^{2_{\mu}^{\ast}}}{|x-y|^{\mu}}dy\Big)| u_{n}(x)|^{2_{\mu}^{\ast}}dx.
\endaligned$$
Then, there exists a countable sequence of points $\{z_{i}\}_{i\in I}\subset \mathbb{R}^N $ and families  of positive numbers $\{\nu_i:i\in I\}$ , $\{\zeta_i:i\in I\}$ and $\{\omega_i:i\in I\}$ such that
\begin{equation}\label{cp5}
\nu=\Big(\displaystyle\int_{\mathbb{R}^N}\frac{|u_0(y)|^{2_{\mu}^{\ast}}}{|x-y|^{\mu}}dy\Big)| u_0(x)|^{2_{\mu}^{\ast}}+ \Sigma_{i\in I}\nu_{i}\delta_{z_{i}},\ \  \Sigma_{i\in I}\nu_{i}^{\frac{1}{2_{\mu}^{\ast}}}<\infty,
\end{equation}
\begin{equation}\label{cp51}
\omega\geq|\nabla u_0|^{2}+\sum_{i\in I}\omega_i\delta_{z_i},
\end{equation}
\begin{equation}\label{cp52}
\zeta\geq|u_0|^{2^*}+\sum_{i\in I}\zeta_i\delta_{z_i},
\end{equation}
and
\begin{equation}\label{cp6}
 \ S_{H,L}\nu_{i}^{\frac{1}{2_{\mu}^{\ast}}}\leq\omega_{i},\ \  \nu_{i}^{\frac{N}{2N-\mu}}\leq C(N,\mu)^{\frac{N}{2N-\mu}}\zeta_{i},
\end{equation}
where $\delta_{x}$ is the Dirac-mass of mass 1 concentrated at $x\in\mathbb{R}^N$.

For the energy at infinity, we have
\begin{equation}\label{cp51}
\overline{\lim}_{n\rightarrow\infty}\int_{\mathbb{R}^N}\int_{\mathbb{R}^N}
\frac{|u_{n}(y)|^{2_{\mu}^{\ast}}|u_{n}(x)|^{2_{\mu}^{\ast}}}{|x-y|^{\mu}}
dydx=\nu_{\infty}+\int_{\mathbb{R}^N}d\nu,
\end{equation}
and
\begin{equation}\label{cp71}
C(N,\mu)^{\frac{-2N}{2N-\mu}}\nu_{\infty}^{\frac{2N}{2N-\mu}}\leq \zeta_{\infty}(\int_{\mathbb{R}^N}d\zeta+\zeta_{\infty}),\  \  S_{H,L}^{2}\nu_{\infty}^{\frac{2}{2_{\mu}^{\ast}}}\leq \omega_{\infty}(\int_{\mathbb{R}^N}d\omega+\omega_{\infty}).
\end{equation}
Moreover, if $u=0$ and $\displaystyle\int_{\mathbb{R}^N}d\omega
=S_{H,L}\left(\int_{\mathbb{R}^N}d\nu\right)^{\frac{1}{2_{\mu}^{\ast}}}$, then $\nu$ is concentrated at a single point.
\end{lem}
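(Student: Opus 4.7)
The plan is to adapt P.L.~Lions' proof of Lemma~\ref{Concentration-compactness principle} to the nonlocal setting, using the Hardy--Littlewood--Sobolev--Sobolev inequality $S_{H,L}\|u\|_{NL}^{2}\leq \|\nabla u\|_{2}^{2}$ from Proposition~\ref{ExFu} in place of the classical Sobolev inequality, combined with the Hardy--Littlewood--Sobolev inequality (Proposition~\ref{HLS}) and the nonlocal Brezis--Lieb splitting (Lemma~\ref{BLN}). Applying Lemma~\ref{Concentration-compactness principle} directly to $\{u_{n}\}$ already yields the atomic decompositions \eqref{cp51} and \eqref{cp52} for $\omega$ and $\zeta$ at a countable set of points $\{z_{i}\}$, so what remains is to analyze the new measure $\nu$ and the behavior at infinity.

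For the atomic structure of $\nu$ at finite points, I first reduce to $u_{0}\equiv 0$ by setting $v_{n}:=u_{n}-u_{0}$: a localized version of Lemma~\ref{BLN}, proved by applying the standard Brezis--Lieb lemma to the inner Riesz integral, shows that $\nu=\bigl(\int\frac{|u_{0}(y)|^{2_{\mu}^{\ast}}}{|x-y|^{\mu}}dy\bigr)|u_{0}|^{2_{\mu}^{\ast}}+\widetilde{\nu}$, where $\widetilde{\nu}$ is the weak limit of the corresponding quantity for $v_{n}$, so we may assume $u_{0}=0$. For any $\phi\in C_{c}^{\infty}(\R^{N})$ I apply Proposition~\ref{ExFu} to $\phi v_{n}\in D^{1,2}(\R^{N})$:
\[
S_{H,L}\left(\int_{\R^{N}}\int_{\R^{N}}\frac{|\phi v_{n}(x)|^{2_{\mu}^{\ast}}|\phi v_{n}(y)|^{2_{\mu}^{\ast}}}{|x-y|^{\mu}}\,dxdy\right)^{1/2_{\mu}^{\ast}}\leq \int_{\R^{N}}|\nabla(\phi v_{n})|^{2}dx.
\]
As $n\to\infty$ the right-hand side converges to $\int\phi^{2}d\omega$ because $v_{n}\to 0$ in $L^{2}_{loc}$ kills the two cross terms in the expansion of $|\nabla(\phi v_{n})|^{2}$. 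Concentrating $\phi$ around a point $z_{i}$ and controlling the long-range interactions where only one of $x,y$ is close to $z_{i}$ via Proposition~\ref{HLS}, the left-hand side is bounded below by $S_{H,L}\nu_{i}^{1/2_{\mu}^{\ast}}$ in the limit, giving the reverse-H\"older inequality $S_{H,L}\nu_{i}^{1/2_{\mu}^{\ast}}\leq\omega_{i}$. This forces the singular part of $\widetilde{\nu}$ to be countably atomic, with $\sum\nu_{i}^{1/2_{\mu}^{\ast}}<\infty$ following from the summability of $\omega_{i}$. The companion inequality $\nu_{i}^{N/(2N-\mu)}\leq C(N,\mu)^{N/(2N-\mu)}\zeta_{i}$ comes from applying Proposition~\ref{HLS} with $f=g=|v_{n}|^{2_{\mu}^{\ast}}\mathbf{1}_{B_{\varepsilon}(z_{i})}$, using that since $v_{n}\to 0$ a.e. the contribution to $\nu_{i}$ from $y\notin B_{\varepsilon}(z_{i})$ vanishes as $\varepsilon\to 0$.

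For the mass at infinity, I use a cutoff $\psi_{R}\in C^{\infty}(\R^{N})$ with $\psi_{R}\equiv 0$ on $B_{R}$ and $\psi_{R}\equiv 1$ on $B_{2R}^{c}$, together with Cauchy--Schwarz for the positive-definite bilinear form $B(f,g):=\int\int\frac{f(x)g(y)}{|x-y|^{\mu}}\,dxdy$ (positive-definiteness of the Riesz kernel on $\R^{N}$). Since $\nu_{\infty}=\lim_{R}\overline{\lim}_{n}B(\mathbf{1}_{B_{R}^{c}}|u_{n}|^{2_{\mu}^{\ast}},\,|u_{n}|^{2_{\mu}^{\ast}})$, Cauchy--Schwarz gives
\[
\nu_{\infty}^{2}\leq \bigl[\lim B(\mathbf{1}_{B_{R}^{c}}|u_{n}|^{2_{\mu}^{\ast}},\mathbf{1}_{B_{R}^{c}}|u_{n}|^{2_{\mu}^{\ast}})\bigr]\cdot\bigl[\lim B(|u_{n}|^{2_{\mu}^{\ast}},|u_{n}|^{2_{\mu}^{\ast}})\bigr].
\]
Bounding both factors by Proposition~\ref{HLS} yields the first inequality of \eqref{cp71}; bounding both factors by Proposition~\ref{ExFu} (applied to $\psi_{R}u_{n}$ and $u_{n}$ respectively) yields the second. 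The identity $\overline{\lim}\int\int|u_{n}|^{2_{\mu}^{\ast}}|u_{n}|^{2_{\mu}^{\ast}}|x-y|^{-\mu}\,dydx=\int d\nu+\nu_{\infty}$ follows by partitioning $\R^{N}$ into $B_{R}$ and $B_{R}^{c}$ in the outer variable and letting $R\to\infty$.

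Finally, when $u_{0}=0$ and $\int d\omega=S_{H,L}(\int d\nu)^{1/2_{\mu}^{\ast}}$, we have $\nu=\sum_{i}\nu_{i}\delta_{z_{i}}$ and $\omega\geq\sum_{i}\omega_{i}\delta_{z_{i}}$, whence
\[
\int d\omega\geq \sum_{i}\omega_{i}\geq S_{H,L}\sum_{i}\nu_{i}^{1/2_{\mu}^{\ast}}\geq S_{H,L}\Bigl(\sum_{i}\nu_{i}\Bigr)^{1/2_{\mu}^{\ast}}=S_{H,L}\Bigl(\int d\nu\Bigr)^{1/2_{\mu}^{\ast}},
\]
the last inequality using the subadditivity $(\sum a_{i})^{\alpha}\leq \sum a_{i}^{\alpha}$ valid for $\alpha=1/2_{\mu}^{\ast}<1$, with equality if and only if at most one $a_{i}$ is nonzero; the hypothesis forces this case, so $\nu$ is concentrated at a single point. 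The main obstacle I anticipate is the rigorous nonlocal reverse-H\"older step: since the kernel $|x-y|^{-\mu}$ does not localize, verifying that the left-hand side of the displayed $S_{H,L}$-Sobolev inequality is bounded below by $\nu_{i}$ in the limit (as $\phi$ concentrates at $z_{i}$) requires careful control of the cross interactions between the concentration point $z_{i}$ and the bulk of $v_{n}$ elsewhere.
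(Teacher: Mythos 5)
Your overall strategy mirrors the paper's: reduce to $u_{0}=0$ by the nonlocal Brezis--Lieb splitting, test the $S_{H,L}$-Sobolev inequality and the HLS inequality against $\phi v_{n}$ with $\phi\in C_{c}^{\infty}$, pass to the limit to obtain two reverse-H\"older type inequalities relating the measures, and invoke Lions' lemma on the atomic structure. Your treatment of the energy at infinity (Cauchy--Schwarz for the positive-definite Riesz form, followed by HLS/$S_{H,L}$-Sobolev) and your single-atom argument (subadditivity of $t\mapsto t^{1/2_{\mu}^{\ast}}$ forcing all the per-atom inequalities to saturate) are both correct and slightly more elementary than the paper's corresponding steps.

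However, you correctly identify, and then leave open, the \emph{one} step on which the whole atomic analysis rests. To pass to the limit in $\int\int|x-y|^{-\mu}|\phi v_{n}(x)|^{2_{\mu}^{\ast}}|\phi v_{n}(y)|^{2_{\mu}^{\ast}}\,dx\,dy$ and recognize the limit as a quantity tested against the measure $\kappa=\nu-\bigl(|x|^{-\mu}\ast|u_0|^{2_\mu^*}\bigr)|u_0|^{2_\mu^*}$, one must show that, for \emph{fixed} $\phi\in C_{c}^{\infty}$,
\[
\int_{\R^{N}}\Bigl(\,|x|^{-\mu}\ast|\phi v_{n}|^{2_{\mu}^{\ast}}\Bigr)|\phi v_{n}|^{2_{\mu}^{\ast}}\,dx
-\int_{\R^{N}}\Bigl(\,|x|^{-\mu}\ast|v_{n}|^{2_{\mu}^{\ast}}\Bigr)|\phi|^{2\cdot2_{\mu}^{\ast}}|v_{n}|^{2_{\mu}^{\ast}}\,dx\longrightarrow 0,
\]
i.e.\ that the cutoff inside the Riesz convolution can be replaced by a multiplicative weight outside it, up to an $o(1)$ error. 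Neither ``control via Proposition~\ref{HLS}'' nor the observation that $v_{n}\to0$ a.e.\ suffices in itself: $v_{n}\to0$ a.e.\ does not control the $L^{2^{*}}$-mass of $v_{n}$ away from the concentration point, so the ``far $y$'' contribution is not manifestly negligible, and the error term has $2_{\mu}^{\ast}$-homogeneity in both variables, which blocks a direct HLS bound from giving smallness. The paper isolates this as a standalone estimate (its \eqref{cp0}): it writes the error as $\int\Phi_{n}$ with
\[
\Phi_{n}(x)=\Bigl(\int_{\R^{N}}\frac{\bigl(|\phi(y)|^{2_{\mu}^{\ast}}-|\phi(x)|^{2_{\mu}^{\ast}}\bigr)|v_{n}(y)|^{2_{\mu}^{\ast}}}{|x-y|^{\mu}}\,dy\Bigr)|\phi v_{n}(x)|^{2_{\mu}^{\ast}},
\]
shows $\Phi_{n}\to0$ a.e., exhibits a uniform tail bound $\int_{|x|\ge M}|\Phi_{n}|<\delta$ from the compact support of $\phi$, and then obtains a uniform $L^{1+\tau}(B_{M})$ bound for $\Phi_{n}$ by exploiting the extra regularity of the kernel $L(x,y)=\bigl(|\phi(y)|^{2_{\mu}^{\ast}}-|\phi(x)|^{2_{\mu}^{\ast}}\bigr)|x-y|^{-\mu}$ (it lies in $L^{r}(B_{R})$ for an $r$ strictly larger than what the plain Riesz kernel gives, because the numerator vanishes on the diagonal) together with Young's inequality; Vitali's theorem then yields $\int|\Phi_{n}|\to0$. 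Your proposal contains no substitute for this argument. Without it, the inequalities \eqref{cp8}--\eqref{cp9} linking $d\kappa$, $d\varpi$, $d\varsigma$ — and hence \eqref{cp5}, \eqref{cp6}, and the atomic decomposition — do not follow, so the proof is incomplete at its load-bearing step.
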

\begin{proof}
Since $\{u_{n}\}$ is a bounded sequence in $D^{1,2}(\mathbb{R}^N)$ converging weakly to $u$, denote by $v_{n}:=u_{n}-u_0$, we have $v_{n}(x)\rightarrow0$ a.e. in $\mathbb{R}^N$  and $v_{n}$ converges weakly to $0$ in $D^{1,2}(\R^N)$. Applying Lemma \ref{BLN}, in the sense of measure, we have
\begin{center}
$|\nabla v_{n}|^{2}\rightharpoonup\varpi:=\omega-|\nabla u_0|^{2}$,
\end{center}
\begin{center}
$\Big(\displaystyle\int_{\mathbb{R}^N}\frac{|v_{n}(y)|^{2_{\mu}^{\ast}}}{|x-y|^{\mu}}dy\Big)| v_{n}(x)|^{2_{\mu}^{\ast}}\rightharpoonup\kappa:=\nu-\Big(\displaystyle\int_{\mathbb{R}^N}\frac{|u_0(y)|^{2_{\mu}^{\ast}}}{|x-y|^{\mu}}dy\Big)| u_0(x)|^{2_{\mu}^{\ast}}$
\end{center}
\begin{center}
 and $|v_{n}|^{2^{\ast}}\rightharpoonup\varsigma:=\zeta-|u_0|^{2^{\ast}}$.
 \end{center}

To prove the possible concentration at finite points, we first show that
\begin{equation}\label{cp0}
\Big|\int_{\mathbb{R}^N}(|x|^{-\mu}\ast |\phi v_{n}(x)|^{2_{\mu}^{\ast}})| \phi v_{n}(x)|^{2_{\mu}^{\ast}}dx-\int_{\mathbb{R}^N}(|x|^{-\mu}\ast |v_{n}(x)|^{2_{\mu}^{\ast}})|\phi (x)|^{2_{\mu}^{\ast}}|\phi v_{n}(x)|^{2_{\mu}^{\ast}}dx\Big|\rightarrow0,
\end{equation}
where $\phi\in \mathcal{C}_{0}^{\infty}(\mathbb{R}^N)$. In fact, we denote
$$
\Phi_{n}(x):=(|x|^{-\mu}\ast |\phi v_{n}(x)|^{2_{\mu}^{\ast}})|\phi v_{n}(x)|^{2_{\mu}^{\ast}}-
(|x|^{-\mu}\ast | v_{n}(x)|^{2_{\mu}^{\ast}})|\phi (x)|^{2_{\mu}^{\ast}}|\phi v_{n}(x)|^{2_{\mu}^{\ast}}.
$$
Since $\phi\in \mathcal{C}_{0}^{\infty}(\mathbb{R}^N)$, we have for every $\delta>0$ there exists $M>0$ such that
\begin{equation}\label{cp00}
\int_{|x|\geq M}|\Phi_{n}(x)|dx<\delta \ \ \ (\forall n\geq1).
\end{equation}
Since the Riesz potential defines a linear operator, from the fact that $v_n(x)\to 0$ a.e. in $\mathbb{R}^N$ we know that
$$
\int_{\mathbb{R}^N}\frac{|v_{n}(y)|^{2_{\mu}^{\ast}}}{|x-y|^{\mu}}dy\rightarrow 0
$$
a.e. in $\mathbb{R}^N$ and so we have $\Phi_{n}(x)\rightarrow0$ a.e. in $\mathbb{R}^N$. Notice that
$$
\aligned
\Phi_{n}(x)&=\int_{\mathbb{R}^N}
\frac{(|\phi (y)|^{2_{\mu}^{\ast}}-|\phi (x)|^{2_{\mu}^{\ast}})|v_{n}(y)|^{2_{\mu}^{\ast}}}{|x-y|^{\mu}}dy|\phi v_{n}(x)|^{2_{\mu}^{\ast}}\\
:&=\int_{\mathbb{R}^N}
L(x,y)|v_{n}(y)|^{2_{\mu}^{\ast}}dy|\phi v_{n}(x)|^{2_{\mu}^{\ast}}.
\endaligned
$$
For almost all $x$, there exists $R>0$ large enough such that
$$
\int_{\mathbb{R}^N}
L(x,y)|v_{n}(y)|^{2_{\mu}^{\ast}}dy
=\int_{|y|\leq R}
L(x,y)|v_{n}(y)|^{2_{\mu}^{\ast}}dy
-|\phi (x)|^{2_{\mu}^{\ast}}\int_{|y|\geq R}
\frac{|v_{n}(y)|^{2_{\mu}^{\ast}}}{|x-y|^{\mu}}dy.
$$
 As observed in \cite{Ls2} that $L(x,y)\in L^{r}(B_{R})$ for each $x$, where $r<\frac{N}{\mu-1}$ if $\mu>1$, $r\leq+\infty$ if $0<\mu\leq1$. By the Young inequality, there exists $s>\frac{2N}{\mu}$ such that $$
\Big(\int_{B_{M}}\Big(\int_{B_{R}}
L(x,y)|v_{n}(y)|^{2_{\mu}^{\ast}}dy\Big)^{s}dx\Big)^{\frac{1}{s}}\leq C_{\phi}|L(x,y)|_{r}||v_{n}|^{2_{\mu}^{\ast}}|_{\frac{2N}{2N-\mu}}\leq C_{\phi}'
$$
where $M$ is given in \eqref{cp00}. It is easy to see that for $R>0$ large enough
$$
\Big(\int_{B_{M}}\Big(
|\phi (x)|^{2_{\mu}^{\ast}}\int_{|y|\geq R}
\frac{|v_{n}(y)|^{2_{\mu}^{\ast}}}{|x-y|^{\mu}}dy\Big)^{s}dx\Big)^{\frac{1}{s}}\leq C
$$
and so, we have
$$
\Big(\int_{B_{M}}\Big(\int_{\mathbb{R}^N}
L(x,y)|v_{n}(y)|^{2_{\mu}^{\ast}}dy\Big)^{s}dx\Big)^{\frac{1}{s}}\leq C_{\phi}''.
$$
Then, we can get for $\tau>0$ small enough
$$
\int_{B_{M}}|\Phi_{n}(x)|^{1+\tau}dx\leq \Big(\int_{B_{M}}\Big(\int_{\mathbb{R}^N}
L(x,y)|v_{n}(y)|^{2_{\mu}^{\ast}}dy\Big)^{s}dx\Big)^{\frac{1}{s}}
\Big(\int_{B_{M}}|\phi v_{n}|^{2^{\ast}}dx\Big)^{\frac{2N-\mu}{2N}}\leq C_{\phi}''.
$$
Combining this and $\Phi_{n}(x)\rightarrow0$ a.e. in $\mathbb{R}^N$, we can get
$$
\int_{B_{M}}|\Phi_{n}(x)|dx\rightarrow0 \ \ \ (n\rightarrow\infty).
$$
By this and \eqref{cp00},  we have
$$
\int_{\mathbb{R}^N}|\Phi_{n}(x)|dx\rightarrow0.
$$

 For all $\phi\in \mathcal{C}_{0}^{\infty}(\mathbb{R}^N)$, by the Hardy--Littlewood--Sobolev inequality, we have
$$
\int_{\mathbb{R}^N}\Big(\int_{\mathbb{R}^N}\frac{|\phi v_{n}(y)|^{2_{\mu}^{\ast}}}{|x-y|^{\mu}}
dy\Big)|\phi v_{n}(x)|^{2_{\mu}^{\ast}}dx
\leq C(N,\mu)|\phi v_{n}|_{2^{\ast}}^{2\cdot2_{\mu}^{\ast}}.
$$
By \eqref{cp0}, we have
$$
\int_{\mathbb{R}^N}|\phi(x)|^{2\cdot2_{\mu}^{\ast}}\Big(\int_{\mathbb{R}^N}\frac{|v_{n}(y)|^{2_{\mu}^{\ast}}}{|x-y|^{\mu}}
dy\Big)| v_{n}(x)|^{2_{\mu}^{\ast}}dx
\leq C(N,\mu)|\phi v_{n}|_{2^{\ast}}^{2\cdot2_{\mu}^{\ast}}+o(1).
$$
Passing to the limit as $n\rightarrow+\infty$ we obtain
\begin{equation}\label{cp9}
\int_{\mathbb{R}^N}|\phi(x)|^{2\cdot2_{\mu}^{\ast}}d\kappa
\leq C(N,\mu)\Big(\int_{\mathbb{R}^N}|\phi|^{2^{\ast}}d\varsigma\Big)^{\frac{2N-\mu}{N}}.
\end{equation}
Applying Lemma 1.2 in \cite{Ls1} we know \eqref{cp52} holds.

Taking $\phi=\chi_{\{z_{i}\}}$, $i\in I$, in \eqref{cp9}, we get
$$
\nu_{i}^{\frac{N}{2N-\mu}}\leq C(N,\mu)^{\frac{N}{2N-\mu}}\zeta_{i}, \ \forall i \in I.
$$
By the definition of $S_{H,L}$, we also have
$$
\left(\int_{\mathbb{R}^N}\Big(\int_{\mathbb{R}^N}\frac{|\phi v_{n}(y)|^{2_{\mu}^{\ast}}}{|x-y|^{\mu}}dy\Big)| \phi v_{n}(x)|^{2_{\mu}^{\ast}}dx\right)^{\frac{N-2}{2N-\mu}}
S_{H,L}\leq\displaystyle\int_{\mathbb{R}^N}|\nabla (\phi v_{n})|^{2}dx.
$$
By \eqref{cp0} and $v_{n}\rightarrow0$ in $L_{loc}^{2}(\mathbb{R}^N)$, we have
$$
\left(\int_{\mathbb{R}^N}|\phi(x)|^{2\cdot2_{\mu}^{\ast}}\Big(\int_{\mathbb{R}^N}
\frac{|v_{n}(y)|^{2_{\mu}^{\ast}}}{|x-y|^{\mu}}dy\Big)| v_{n}(x)|^{2_{\mu}^{\ast}}dx\right)^{\frac{N-2}{2N-\mu}}
S_{H,L}\leq\displaystyle\int_{\mathbb{R}^N}\phi^{2}|\nabla v_{n}|^{2}dx+o(1).
$$
Passing to the limit as $n\rightarrow+\infty$ we obtain
\begin{equation}\label{cp8}
\left(\int_{\mathbb{R}^N}|\phi(x)|^{2\cdot2_{\mu}^{\ast}}d\kappa\right)^{\frac{N-2}{2N-\mu}}
S_{H,L}\leq\displaystyle\int_{\mathbb{R}^N}\phi^{2}d\varpi.
\end{equation}
Applying Lemma 1.2 in \cite{Ls1} again we know \eqref{cp51} holds.
Now by taking $\phi=\chi_{\{z_{i}\}}$, $i\in I$, in \eqref{cp8}, we get
$$
S_{H,L}\nu_{i}^{\frac{1}{2_{\mu}^{\ast}}}\leq\omega_{i}, \ \forall i \in I.
$$
Thus we proved \eqref{cp5} and \eqref{cp6}.

Next we are going to prove the possible loss of mass at infinity. For $R>1$, let $\psi_{R}\in \mathcal{C}^{\infty}(\mathbb{R}^N)$ be such that $\psi_{R}(x)=1$ for $|x|>R+1$, $\psi_{R}(x)=0$ for $|x|<R$ and $0\leq\psi_{R}(x)\leq1$ on $\mathbb{R}^N$. For every $R>1$, we have
$$\aligned
&\overline{\lim}_{n\rightarrow\infty}\int_{\mathbb{R}^N}\int_{\mathbb{R}^N}
\frac{|v_{n}(y)|^{2_{\mu}^{\ast}}|v_{n}(x)|^{2_{\mu}^{\ast}}}{|x-y|^{\mu}}
dydx\\
&=\overline{\lim}_{n\rightarrow\infty}\int_{\mathbb{R}^N}\int_{\mathbb{R}^N}
\frac{|u_{n}(y)|^{2_{\mu}^{\ast}}|u_{n}(x)|^{2_{\mu}^{\ast}}}{|x-y|^{\mu}}
dydx-\int_{\mathbb{R}^N}\int_{\mathbb{R}^N}
\frac{|u_0(y)|^{2_{\mu}^{\ast}}|u_0(x)|^{2_{\mu}^{\ast}}}{|x-y|^{\mu}}
dydx\\
&=\overline{\lim}_{n\rightarrow\infty}\Big(\int_{\mathbb{R}^N}\int_{\mathbb{R}^N}
\frac{|u_{n}(y)|^{2_{\mu}^{\ast}}|u_{n}(x)|^{2_{\mu}^{\ast}}\psi_{R}(x)}{|x-y|^{\mu}}
dydx+\int_{\mathbb{R}^N}\int_{\mathbb{R}^N}
\frac{|u_{n}(y)|^{2_{\mu}^{\ast}}|u_{n}(x)|^{2_{\mu}^{\ast}}(1-\psi_{R}(x))}{|x-y|^{\mu}}
dydx\Big)\\
&\hspace{1cm}-\int_{\mathbb{R}^N}\int_{\mathbb{R}^N}
\frac{|u_0(y)|^{2_{\mu}^{\ast}}|u_0(x)|^{2_{\mu}^{\ast}}}{|x-y|^{\mu}}
dydx\\
&=\overline{\lim}_{n\rightarrow\infty}\int_{\mathbb{R}^N}\int_{\mathbb{R}^N}
\frac{|u_{n}(y)|^{2_{\mu}^{\ast}}|u_{n}(x)|^{2_{\mu}^{\ast}}\psi_{R}(x)}{|x-y|^{\mu}}
dydx+\int_{\mathbb{R}^N}(1-\psi_{R})d\nu\\
&\hspace{1cm}+\int_{\mathbb{R}^N}\int_{\mathbb{R}^N}
\frac{|u_0(y)|^{2_{\mu}^{\ast}}|u_0(x)|^{2_{\mu}^{\ast}}(1-\psi_{R}(x))}{|x-y|^{\mu}}
dydx-\int_{\mathbb{R}^N}\int_{\mathbb{R}^N}
\frac{|u_0(y)|^{2_{\mu}^{\ast}}|u_0(x)|^{2_{\mu}^{\ast}}}{|x-y|^{\mu}}
dydx.
\endaligned$$
When $R\rightarrow\infty$, we obtain, by Lebesgue's theorem,
$$
\overline{\lim}_{n\rightarrow\infty}\int_{\mathbb{R}^N}\int_{\mathbb{R}^N}
\frac{|u_{n}(y)|^{2_{\mu}^{\ast}}|u_{n}(x)|^{2_{\mu}^{\ast}}}{|x-y|^{\mu}}
dydx=\nu_{\infty}+\int_{\mathbb{R}^N}d\nu.
$$
By the Hardy--Littlewood--Sobolev inequality, we have
$$\aligned
\nu_{\infty}&=\lim_{R\rightarrow\infty}
\overline{\lim}_{n\rightarrow\infty}
\int_{\mathbb{R}^N}
\Big(\int_{\mathbb{R}^N}
\frac{|u_{n}(y)|^{2_{\mu}^{\ast}}}{|x-y|^{\mu}}dy\Big)|\psi_{R} u_{n}(x)|^{2_{\mu}^{\ast}}dx\\
&\leq C(N,\mu)\lim_{R\rightarrow\infty}\overline{\lim}_{n\rightarrow\infty}
\Big(\int_{\mathbb{R}^N}|u_{n}|^{2^{\ast}}dx
\int_{\mathbb{R}^N}|\psi_{R}u_{n}|^{2^{\ast}}dx\Big)^{\frac{2N-\mu}{2N}}\\
&= C(N,\mu)\Big(\zeta_{\infty}(\int_{\mathbb{R}^N}d\zeta+\zeta_{\infty})\Big)^{\frac{2N-\mu}{2N}},
\endaligned$$
which means
$$
C(N,\mu)^{\frac{-2N}{2N-\mu}}\nu_{\infty}^{\frac{2N}{2N-\mu}}\leq \zeta_{\infty}(\int_{\mathbb{R}^N}d\zeta+\zeta_{\infty}).
$$
Similarly, by the definition of $S_{H,L}$ and $\nu_{\infty}$, we have
$$
\aligned
\nu_{\infty}&=\lim_{R\rightarrow\infty}
\overline{\lim}_{n\rightarrow\infty}
\int_{\mathbb{R}^N}
\Big(\int_{\mathbb{R}^N}
\frac{|u_{n}(y)|^{2_{\mu}^{\ast}}}{|x-y|^{\mu}}dy\Big)| \psi_{R}u_{n}(x)|^{2_{\mu}^{\ast}}dx\\
&\leq C(N,\mu)\lim_{R\rightarrow\infty}\overline{\lim}_{n\rightarrow\infty}
\Big(\int_{\mathbb{R}^N}|u_{n}|^{2^{\ast}}dx
\int_{\mathbb{R}^N}|\psi_{R}u_{n}|^{2^{\ast}}dx\Big)^{\frac{2N-\mu}{2N}}\\
&\leq S_{H,L}^{-2_{\mu}^{\ast}}\lim_{R\rightarrow\infty}\overline{\lim}_{n\rightarrow\infty}
\Big(\int_{\mathbb{R}^N}|\nabla u_{n}|^{2}dx
\int_{\mathbb{R}^N}|\nabla(\psi_{R}u_{n})|^{2}dx\Big)^{\frac{2_{\mu}^{\ast}}{2}}\\
&= S_{H,L}^{-2_{\mu}^{\ast}}\Big(\omega_{\infty}(\int_{\mathbb{R}^N}d\omega+\omega_{\infty})\Big)^{\frac{2_{\mu}^{\ast}}{2}},
\endaligned$$
which means
$$
 S_{H,L}^{2}\nu_{\infty}^{\frac{2}{2_{\mu}^{\ast}}}\leq \omega_{\infty}(\int_{\mathbb{R}^N}d\omega+\omega_{\infty}).
$$

Moreover, if $u=0$ then $\kappa=\nu$ and $\varpi=\omega$.
Then the H\"{o}lder inequality and \eqref{cp8} imply that, for $\phi\in \mathcal{C}_{0}^{\infty}(\mathbb{R}^N)$,
$$
\left(\int_{\mathbb{R}^N}|\phi(x)|^{2\cdot2_{\mu}^{\ast}}d\nu\right)^{\frac{N-2}{2N-\mu}}
S_{H,L}\leq\left(\int_{\mathbb{R}^N}d\omega\right)^{\frac{N-\mu+2}{2N-\mu}}
\left(\int_{\mathbb{R}^N}\phi^{2\cdot2_{\mu}^{\ast}}d\omega\right)^{\frac{N-2}{2N-\mu}}.
$$
Thus we can deduce that $\nu=S_{H,L}^{-2_{\mu}^{\ast}}
\left(\displaystyle\int_{\mathbb{R}^N}d\omega\right)^{\frac{N-\mu+2}{N-2}}\omega$. It follows from \eqref{cp8} that, for $\phi\in \mathcal{C}_{0}^{\infty}(\mathbb{R}^N)$,
$$
\left(\int_{\mathbb{R}^N}|\phi(x)|^{2\cdot2_{\mu}^{\ast}}d\nu\right)^{\frac{N-2}{2N-\mu}}
\left(\int_{\mathbb{R}^N}d\nu\right)^{\frac{N-\mu+2}{2N-\mu}}\leq
\int_{\mathbb{R}^N}|\phi|^{2}d\nu.
$$
And so, for each open set $\Omega$,
$$
\nu(\Omega)^{\frac{N-2}{2N-\mu}}\nu(\mathbb{R}^N)^{\frac{N-\mu+2}{2N-\mu}}
\leq\nu(\Omega).
$$
It follows that $\nu$ is concentrated at a single point.
\end{proof}

\subsection{Convergence of $(PS)$ sequences}

 Let $\{u_n\}$ be a $(PS)$ sequence of $J$ at level $c$,  it is easy to see that $\{u_n\}$ is bounded in $E$. Hence, without loss of generality, we may assume that $\{u_n\}$ converges weakly and a.e. to some $u_0\in E$. Then we are able to recover the lack of compactness by applying the second concentration-compactness principle to the nonlocal Choquard equation. In fact we have the following proposition which was inspired by \cite{Z}.

\begin{Prop}\label{F1}
There exists a positive number $c_0>0$ such that every $(PS)_c$ sequence $\{u_n\}$ of $J$ with $c<c_0$ satisfies
$$
\lim_{n\rightarrow\infty}\int_{\mathbb{R}^N}\int_{\mathbb{R}^N}\frac{|(u_{n}-u_{0})(x)|^{2_{\mu}^{\ast}}
|(u_{n}-u_{0})(y)|^{2_{\mu}^{\ast}}}
{|x-y|^{\mu}}dxdy=0,
$$
where $u_0\in E$ is the weak limit of $\{u_n\}$.
\end{Prop}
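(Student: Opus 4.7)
The first step is the standard boundedness argument: choosing $\theta=2p\in(2,2\cdot 2_\mu^*)$, the combination $J(u_n)-\tfrac{1}{2p}\langle J'(u_n),u_n\rangle$ produces $\tfrac{p-1}{2p}\int(|\nabla u_n|^2+V u_n^2)\,dx$ plus nonnegative nonlocal integrals, and this equals $c+o(1+\|u_n\|_V)$, giving boundedness via Lemma~\ref{F0}. Extracting a subsequence, $u_n\rightharpoonup u_0$ in $E$, a.e.\ in $\R^N$ and strongly in $L^q_{loc}(\R^N)$ for every $1\le q<2^*$. I apply Lemma~\ref{CCP1} to produce the measures $\omega,\nu,\zeta$, the atoms $\{z_i\}_{i\in I}$ with weights $\nu_i,\omega_i,\zeta_i$, and the quantities $\omega_\infty,\nu_\infty,\zeta_\infty$ at infinity. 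The whole matter reduces to exhibiting a threshold $c_0>0$ below which $\nu_i=0$ for every $i\in I$ and $\nu_\infty=0$.

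To rule out concentration at a point $z_i$, take $\phi\in C_0^\infty(\R^N)$ with $\phi(0)=1$, $0\le\phi\le 1$, and set $\phi_\varepsilon(x):=\phi((x-z_i)/\varepsilon)$. Plugging $\phi_\varepsilon u_n\in E$ into $\langle J'(u_n),\cdot\rangle=o(1)$ and letting first $n\to\infty$ and then $\varepsilon\to 0$, each of the following pieces tends to zero: the gradient cross-term $\int u_n\nabla u_n\cdot\nabla\phi_\varepsilon=-\tfrac12\int u_n^2\Delta\phi_\varepsilon$, by local $L^2$ convergence plus absolute continuity of $|u_0|^2$ on shrinking balls; the potential term, via $|\int V u_n^2\phi_\varepsilon|\le\bigl(\int_{\mathrm{supp}\,\phi_\varepsilon}|V|^{N/2}\bigr)^{2/N}|u_n|_{2^*}^2$ together with $V\in L^{N/2}$; and the mixed nonlocal integral $\int\int|x-y|^{-\mu}|u_n|^{2_\mu^*}|u_n|^p\phi_\varepsilon$ and the subcritical integral with density $|u_n|^{2p}$, by Hardy--Littlewood--Sobolev combined with Rellich--Kondrachov compactness on the bounded support (available because $2Np/(2N-\mu)<2^*$ since $p<2_\mu^*$). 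Only the pure critical nonlocal integral survives, with limit $\nu_i$, which gives $\omega_i\le\nu_i$. Combined with $S_{H,L}\nu_i^{1/2_\mu^*}\le\omega_i$ from \eqref{cp6} this forces the dichotomy $\nu_i=0$ or $\nu_i\ge S_{H,L}^{(2N-\mu)/(N-\mu+2)}$. The same reasoning with a cut-off $\psi_R$ at infinity, together with the second estimate in \eqref{cp71}, yields the same dichotomy for $\nu_\infty$.

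Finally I exploit the level $c$. Expanding the product $(|u|^{2_\mu^*}+|u|^p)(|u|^{2_\mu^*}+(p/2_\mu^*)|u|^p)$ and using $\langle J'(u_n),u_n\rangle\to 0$, the combination $J(u_n)-\tfrac12\langle J'(u_n),u_n\rangle$ rewrites as
$$
c+o(1)=\frac{1}{2\cdot 2_\mu^*}\int\!\!\int\frac{(2_\mu^*-1)|u_n(x)|^{2\cdot 2_\mu^*}+(2_\mu^*+p-2)|u_n(x)|^{2_\mu^*+p}+(p-1)|u_n(x)|^{2p}}{|x-y|^\mu}\,dx\,dy,
$$
with all three coefficients strictly positive since $1<p<2_\mu^*$. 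Discarding the nonnegative subcritical pieces and invoking Lemma~\ref{CCP1} to evaluate the limit of the pure critical integral I obtain
$$
c\ge \frac{N-\mu+2}{2(2N-\mu)}\Bigl(\|u_0\|_{NL}^{2\cdot 2_\mu^*}+\sum_{i\in I}\nu_i+\nu_\infty\Bigr).
$$
Setting $c_0:=\tfrac{N-\mu+2}{2(2N-\mu)}S_{H,L}^{(2N-\mu)/(N-\mu+2)}$, any nonzero $\nu_i$ or $\nu_\infty$ forces $c\ge c_0$, so for $c<c_0$ every atom and the infinity mass vanish. Then $\|u_n\|_{NL}^{2\cdot 2_\mu^*}\to\|u_0\|_{NL}^{2\cdot 2_\mu^*}$, and the nonlocal Brezis--Lieb splitting of Lemma~\ref{BLN} delivers the conclusion. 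I expect the main technical hurdle to be the localization analysis itself, specifically the mixed Riesz integral $\int\int|x-y|^{-\mu}|u_n|^{2_\mu^*}|u_n|^p\phi_\varepsilon$: a critical factor sits under the convolution kernel, and its vanishing on shrinking balls rests on the fine HLS plus strict-subcriticality argument rather than on a direct compact embedding.
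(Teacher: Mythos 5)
Your treatment of the local atoms $z_i$ and the final Brezis--Lieb step match the paper, but there is a genuine gap in the handling of $\nu_\infty$, and that gap is fatal to the clean threshold $c_0=\tfrac{N-\mu+2}{2(2N-\mu)}S_{H,L}^{(2N-\mu)/(N-\mu+2)}$ you propose. You assert that ``the same reasoning with a cut-off $\psi_R$ at infinity, together with the second estimate in \eqref{cp71}, yields the same dichotomy for $\nu_\infty$.'' It does not, for two reasons. First, the estimate \eqref{cp71} is $S_{H,L}^{2}\nu_{\infty}^{2/2_{\mu}^{\ast}}\leq \omega_{\infty}\bigl(\int_{\R^N}d\omega+\omega_{\infty}\bigr)$, not $S_{H,L}\nu_{\infty}^{1/2_{\mu}^{\ast}}\leq \omega_{\infty}$: the convolution $\int_{\R^N}|u_n(y)|^{2_\mu^*}|x-y|^{-\mu}\,dy$ appearing in the definition of $\nu_\infty$ ranges over \emph{all} of $\R^N$, so the Hardy--Littlewood--Sobolev bound for the escaping mass is intrinsically coupled to the total Dirichlet energy $\int d\omega$, and no $n$- and $R$-independent lower bound on a nonzero $\nu_\infty$ follows. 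This is exactly the nonlocal feature that has no analogue in the local case, where $S\zeta_\infty^{2/2^*}\leq\omega_\infty$ is clean. Second, even the step $\omega_\infty\leq\nu_\infty$ is not the ``same reasoning'' as at an atom: on a shrinking ball the subcritical Riesz terms vanish by absolute continuity, but at infinity the corresponding term is the quantity $F_\infty$ defined in the paper, and it is not a priori zero --- controlling it requires the potential hypothesis $(V)$ through the lower bound $V_\infty\geq\tau_0\lim\lim\int|u_n|^2\eta_R$.

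What the paper does instead, and what your argument is missing, is to work with the \emph{local} quantity $\zeta_\infty$: testing $J'(u_n)$ against $u_n\eta_R$ gives the balance $\omega_\infty+V_\infty=\nu_\infty+F_\infty$; HLS plus H\"older plus the interpolation exponents $a_p,b_p$ bound both $F_\infty$ and $\nu_\infty$ by powers of $V_\infty$ and $\zeta_\infty$, leading to $\omega_\infty+V_\infty\leq C_5(V_\infty/\tau_0)^{a_p}\zeta_\infty^{b_p}+C_6\zeta_\infty^{(2N-\mu)/(2N)}$; Young's inequality then forces a genuine dichotomy $\zeta_\infty=0$ or $\zeta_\infty\geq\Lambda_0$ for some $\Lambda_0>0$ depending on a priori energy bounds; and in the second case the \emph{local} concentration-compactness inequality $\omega_\infty\geq S\zeta_\infty^{2/2^*}\geq S\Lambda_0^{2/2^*}$ combines with the lower bound $c+o(1)\geq\tfrac{N+2-\mu}{4N-2\mu}\sum_i\nu_i+\alpha'\omega_\infty$ to give a contradiction for $c$ small. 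The resulting threshold is $c_0=\min\{\tfrac{N+2-\mu}{4N-2\mu}S_{H,L}^{(2N-\mu)/(N+2-\mu)},\,\alpha' S\Lambda_0^{2/2^*}\}$, which is strictly smaller and less explicit than your proposed $c_0$. In short: your identification of the mixed Riesz integral at an atom as ``the main technical hurdle'' is misplaced; the real obstruction is the coupling of $\nu_\infty$ to the total energy, and it forces a detour through $\zeta_\infty$, $F_\infty$, $V_\infty$ and Young's inequality that your sketch skips.
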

\begin{proof}
Let $\eta\in \mathcal{C}_{0}^{\infty}([0,\infty))$ be a standard cut-off function on $[0, 1]$, that is,
$$
\eta(t)\equiv1,\ \ t\in[0, 1];\ \ \eta(t)\equiv0,\ \ t>2;\ \
|\eta'(t)|\leq C,\ \ 0\leq\eta(t)\leq1
$$
for some $C > 0$. Fix $i\in I$. For $\varepsilon> 0$, put
\begin{equation}\label{CUT1}
\eta_{\varepsilon}=\eta(\frac{x-z_{i}}{\varepsilon}),\ \ B=B_{2\varepsilon}(z_{i}).
\end{equation}
It follows from the H\"{o}lder inequality and the Sobolev inequality that for all $\sigma\in[0,2^{\ast})$,
$$
\aligned
\int_{B}|u_n|^{\sigma}dx\leq|B|^{1-\frac{\sigma}{2^{\ast}}}
\Big(\int_{B}|u_n|^{2^{\ast}}dx\Big)^{\frac{\sigma}{2^{\ast}}}
\leq C|B|^{1-\frac{\sigma}{2^{\ast}}}
\Big(\int_{B}|\nabla u_n|^{2}dx\Big)^{\frac{\sigma}{2}}
=o(1), \ \hbox{as}\ \ \varepsilon\rightarrow0^{+}.
\endaligned
$$
Hence, by the Hardy--Littlewood--Sobolev inequality, as $\varepsilon\rightarrow0^{+}$, there holds
\begin{equation}\label{p91}
\int_{\mathbb{R}^N}
\int_{\mathbb{R}^N}\frac{|u_n(x)|^{2_{\mu}^{\ast}}
|u_n(y)|^{p}\eta_{\varepsilon}(y)}
{|x-y|^{\mu}}dxdy=o(1),
\end{equation}
\begin{equation}\label{p92}
\int_{\mathbb{R}^N}
\int_{\mathbb{R}^N}\frac{|u_n(x)|^{p}
|u_n(y)|^{p}\eta_{\varepsilon}(y)}
{|x-y|^{\mu}}dxdy=o(1)
\end{equation}
and
\begin{equation}\label{p9}
\int_{\mathbb{R}^N}
\int_{\mathbb{R}^N}\frac{|u_n(x)|^{p}
|u_n(y)|^{2_{\mu}^{\ast}}\eta_{\varepsilon}(y)}
{|x-y|^{\mu}}dxdy=o(1).
\end{equation}
Using Proposition \ref{CCP1}, and passing to the limit by first letting $n\rightarrow\infty$ and then letting $\varepsilon\rightarrow0^{+}$, we have
\begin{equation}\label{p10}
\lim_{n\rightarrow\infty}\int_{\mathbb{R}^N}\int_{\mathbb{R}^N}\frac{|u_n(x)|
^{2_{\mu}^{\ast}}|u_n(y)|^{2_{\mu}^{\ast}}\eta_{\varepsilon}(y)}
{|x-y|^{\mu}}dxdy=\nu_{i}.
\end{equation}

 For $R>0$, put
\begin{equation}\label{CUT21}
\eta_{R}=\eta(\frac{2R}{|x|})
\end{equation}
and denote
$$\aligned
V_{\infty}:=\lim_{R\rightarrow\infty}\overline{\lim}_{n\rightarrow\infty}
&\int_{\mathbb{R}^N}V|u_{n}|^{2}\eta_{R}dx,\\
F_{\infty}:=\lim_{R\rightarrow\infty}\overline{\lim}_{n\rightarrow\infty}
&\Big(\frac{p}{2_{\mu}^{\ast}}\int_{\mathbb{R}^N}\int_{\mathbb{R}^N}\frac{|u_n(x)|
^{2_{\mu}^{\ast}}|u_n(y)|^{p}\eta_{R}(y)}
{|x-y|^{\mu}}dxdy\\
&\hspace{4mm}+\int_{\mathbb{R}^N}\int_{\mathbb{R}^N}\frac{|u_n(x)|
^{p}|u_n(y)|^{2_{\mu}^{\ast}}\eta_{R}(y)}
{|x-y|^{\mu}}dxdy\\
&\hspace{8mm}+\frac{p}{2_{\mu}^{\ast}}\int_{\mathbb{R}^N}\int_{\mathbb{R}^N}\frac{|u_n(x)|
^{p}|u_n(y)|^{p}\eta_{R}(y)}
{|x-y|^{\mu}}dxdy\Big).
\endaligned$$
Multiplying $J'(u_n)$ with the test function $u_n\eta_{R}$, we obtain by the definition of
$
\omega_{\infty},
\nu_{\infty}$ that
\begin{equation}\label{p13}
\omega_{\infty}+V_{\infty}=\nu_{\infty}+F_{\infty}.
\end{equation}

It follows that
\begin{equation}\label{p19}
\aligned
c+o(1)&=J(u_n)-\frac{1}{2}\langle J'(u_n),u_n\rangle\\
&=\frac{p+2_{\mu}^{\ast}-2}{2\cdot2_{\mu}^{\ast}}\int_{\mathbb{R}^N}\int_{\mathbb{R}^N}\frac{|u_n(x)|
^{2_{\mu}^{\ast}}|u_n(y)|^{p}}
{|x-y|^{\mu}}dxdy
+\frac{p-1}{2\cdot2_{\mu}^{\ast}}\int_{\mathbb{R}^N}\int_{\mathbb{R}^N}\frac{|u_n(x)|
^{p}|u_n(y)|^{p}}
{|x-y|^{\mu}}dxdy\\
&\hspace{0.5cm}+\frac{N+2-\mu}{4N-2\mu}\int_{\mathbb{R}^N}\int_{\mathbb{R}^N}\frac{|u_n(x)|
^{2_{\mu}^{\ast}}|u_n(y)|^{2_{\mu}^{\ast}}}
{|x-y|^{\mu}}dxdy\\
&\geq\alpha'\Big[\Big(\frac{p}{2_{\mu}^{\ast}}+1\Big)\int_{\mathbb{R}^N}\int_{\mathbb{R}^N}\frac{|u_n(x)|
^{2_{\mu}^{\ast}}|u_n(y)|^{p}}
{|x-y|^{\mu}}dxdy
+\frac{p}{2_{\mu}^{\ast}}\int_{\mathbb{R}^N}\int_{\mathbb{R}^N}\frac{|u_n(x)|
^{p}|u_n(y)|^{p}}
{|x-y|^{\mu}}dxdy\Big]\\
&\hspace{0.5cm}+\frac{N+2-\mu}{4N-2\mu}\int_{\mathbb{R}^N}\int_{\mathbb{R}^N}\frac{|u_n(x)|
^{2_{\mu}^{\ast}}|u_n(y)|^{2_{\mu}^{\ast}}}
{|x-y|^{\mu}}dxdy\\
&\geq\alpha'F_{\infty}
+\frac{N+2-\mu}{4N-2\mu}\int_{\mathbb{R}^N}\int_{\mathbb{R}^N}\frac{|u_0(x)|
^{2_{\mu}^{\ast}}|u_0(y)|^{2_{\mu}^{\ast}}}
{|x-y|^{\mu}}dxdy
+\frac{N+2-\mu}{4N-2\mu}(\nu_{\infty}+\sum_{i\in I}\nu_{i})+o(1)\\
&\geq\frac{N+2-\mu}{4N-2\mu}\sum_{i\in I}\nu_{i}+\alpha'(\nu_{\infty}+F_{\infty})+o(1)\\
&\geq\frac{N+2-\mu}{4N-2\mu}\sum_{i\in I}\nu_{i}+\alpha'\omega_{\infty}+o(1)
\endaligned
\end{equation}
for some $0<\alpha'<\frac{p-1}{2p}$, where we have used Lemma \ref{CCP1},
$$
\lim_{n\rightarrow\infty}
\int_{\mathbb{R}^N}\int_{\mathbb{R}^N}\frac{|u_n(x)|
^{2_{\mu}^{\ast}}|u_n(y)|^{2_{\mu}^{\ast}}}
{|x-y|^{\mu}}dxdy
=\nu_{\infty}+\sum_{i\in I}\nu_{i}+\int_{\mathbb{R}^N}\int_{\mathbb{R}^N}\frac{|u_0(x)|
^{2_{\mu}^{\ast}}|u_0(y)|^{2_{\mu}^{\ast}}}
{|x-y|^{\mu}}dxdy
$$
and the fact $\nu_{\infty}+F_{\infty}=\omega_{\infty}+V_{\infty}\geq\omega_{\infty}$.

Now we want to show that there exists  $c_0>0$ such that if  $c<c_0$ then
the singular part and escaping part of the energy of the $(PS)_c$ sequence $\{u_n\}$ are trivial. First we claim that
\begin{equation}\label{p18}
I=\emptyset.
\end{equation}

On the contrary, assume that $I\neq\emptyset$, then there holds
\begin{equation}\label{p2}
\nu_{i}\geq S_{H,L}^{\frac{2N-\mu}{N+2-\mu}},\ i\in I.
\end{equation}
In particular, the set $I$ is finite. In fact, let $\eta_{\varepsilon}$ be the cut-off function defined in \eqref{CUT1}.
By definition, a direct computation yields
$$
\|u_n\eta_{\varepsilon}\|_{V}=\Big(\int_{\mathbb{R}^N}|\nabla (u_n\eta_{\varepsilon})|^{2}dx+\int_{\mathbb{R}^N}V_{+}|u_n
\eta_{\varepsilon}|^{2}dx\Big)^{\frac{1}{2}}\leq C\|u_n\|_{V}=O(1).
$$
Apply $J'(u_n)$ to the test function $u_n\eta_{\varepsilon}$ to obtain
\begin{equation}\label{p3}
\aligned
o(1)=\langle J'(u_n),u_n\eta_{\varepsilon}\rangle=&\int_{\mathbb{R}^N}|\nabla u_n|^{2}\eta_{\varepsilon}dx+\int_{\mathbb{R}^N}u_n\nabla u_n\nabla\eta_{\varepsilon}dx+\int_{\mathbb{R}^N}Vu_n^{2}\eta_{\varepsilon}dx\\
&-\int_{\mathbb{R}^N}
\int_{\mathbb{R}^N}\frac{(|u_n(x)|^{2_{\mu}^{\ast}}+|u_n(x)|^{p})(|u_n(y)|^{2_{\mu}^{\ast}}\eta_{\varepsilon}(y)
+\frac{p}{2_{\mu}^{\ast}}|u_n(y)|^{p}\eta_{\varepsilon}(y))}
{|x-y|^{\mu}}dxdy,
\endaligned
\end{equation}
since $\{u_n\}$ is a $(PS)$ sequence. By Lemma \ref{CCP1}, we know
\begin{equation}\label{p5}
\int_{\mathbb{R}^N}|\nabla u_n|^{2}\eta_{\varepsilon}dx\rightarrow\omega_{i}'\geq\omega_{i}.
\end{equation}
By Lemma 3.1 in \cite{Z}, we have as $\varepsilon\rightarrow0^{+}$
\begin{equation}\label{p7}
\int_{\mathbb{R}^N}u_n\nabla u_n\nabla\eta_{\varepsilon}dx=o(1)
\end{equation}
and
\begin{equation}\label{p8}
\Big|\int_{\mathbb{R}^N}Vu_n^{2}\eta_{\varepsilon}dx\Big|=o(1).
\end{equation}
From \eqref{p91}-\eqref{p9} and \eqref{p5}-\eqref{p8}, we infer that for each fixed $i\in I$
$$
\omega_{i}-\nu_{i}\leq0.
$$
Utilizing \eqref{cp6}, we finally arrive at
$$
S_{H,L}\nu_{i}^{\frac{1}{2_{\mu}^{\ast}}}-\nu_{i}\leq0.
$$
Thus \eqref{p2} follows. Now \eqref{p19} leads to a contradiction if $c_0\leq\frac{N+2-\mu}{4N-2\mu}S_{H,L}^{\frac{2N-\mu}{N+2-\mu}}$ and thus the singular part is empty.

Next we prove that
\begin{equation}\label{p18}
\nu_{\infty}=\omega_{\infty}=F_{\infty}=V_{\infty}=0.
\end{equation}
To prove that the escaping part is trivial, let
$$
a_{p}:=\frac{2^{\ast}-\frac{2Np}{2N-\mu}}{2^{\ast}-2} \ \ \mbox{and} \ \ b_{p}:=\frac{\frac{2Np}{2N-\mu}-2}{2^{\ast}-2},
$$
then $a_{p},b_{p}\in(0,1)$ and $a_{p}+b_{p}=1$.
By Lemma 3.2 in \cite{Z} we know that
\begin{equation}\label{p14}
V_{\infty}=\lim_{R\rightarrow\infty}\lim_{n\rightarrow\infty}
\int_{\mathbb{R}^N}V|u_{n}|^{2}\eta_{R}dx\geq\tau_{0}\lim_{R\rightarrow\infty}\lim_{n\rightarrow\infty}
\int_{\mathbb{R}^N}|u_{n}|^{2}\eta_{R}dx.
\end{equation}
With this fact, applying the Hardy--Littlewood--Sobolev and H\"{o}lder inequalities, we have
\begin{equation}\label{p151}
\aligned
\lim_{R\rightarrow\infty}
&\overline{\lim}_{n\rightarrow\infty}\int_{\mathbb{R}^N}\int_{\mathbb{R}^N}\frac{|u_n(x)|
^{2_{\mu}^{\ast}}|u_n(y)|^{p}\eta_{R}(y)}
{|x-y|^{\mu}}dxdy\\
&\leq C(N,\mu)\lim_{R\rightarrow\infty}\overline{\lim}_{n\rightarrow\infty}|u_n|_{2^{\ast}}^{2_{\mu}^{\ast}}||u_n|^{p} \eta_{R}|_{\frac{2N}{2N-\mu}}\\
&\leq C_1\lim_{R\rightarrow\infty}\overline{\lim}_{n\rightarrow\infty}\Big(\int_{\mathbb{R}^N}|u_{n}|^{2}\eta_{R}dx\Big)^{a_{p}}
\Big(\int_{\mathbb{R}^N}|u_{n}|^{2^{\ast}}\eta_{R}dx\Big)^{b_{p}}\\
&\leq C_1(\frac{V_{\infty}}{\tau_{0}})^{a_{p}}
\zeta_{\infty}^{b_{p}},
\endaligned
\end{equation}
\begin{equation}\label{p152}
\aligned
\lim_{R\rightarrow\infty}&\overline{\lim}_{n\rightarrow\infty}\int_{\mathbb{R}^N}\int_{\mathbb{R}^N}\frac{|u_n(x)|
^{p}|u_n(y)|^{2_{\mu}^{\ast}}\eta_{R}(y)}
{|x-y|^{\mu}}dxdy\\
&\leq C(N,\mu)\lim_{R\rightarrow\infty}\overline{\lim}_{n\rightarrow\infty}|u_n|_{\frac{2Np}{2N-\mu}}^{p}
\Big(\int_{\mathbb{R}^N}|u_{n}|^{2^{\ast}}|\eta_{R}|^{\frac{2N}{2N-\mu}}dx\Big)^{\frac{2N-\mu}{2N}}\\
&\leq C_2
\zeta_{\infty}^{\frac{2N-\mu}{2N}}
\endaligned
\end{equation}
and
\begin{equation}\label{p153}
\aligned
\lim_{R\rightarrow\infty}&\overline{\lim}_{n\rightarrow\infty}\int_{\mathbb{R}^N}\int_{\mathbb{R}^N}\frac{|u_n(x)|
^{p}|u_n(y)|^{p}\eta_{R}(y)}
{|x-y|^{\mu}}dxdy\\
&\leq C(N,\mu)\lim_{R\rightarrow\infty}\overline{\lim}_{n\rightarrow\infty}|u_n|_{\frac{2Np}{2N-\mu}}^{p}||u_n|^{p} \eta_{R}|_{\frac{2N}{2N-\mu}}\\
&\leq C_3\lim_{R\rightarrow\infty}\overline{\lim}_{n\rightarrow\infty}\Big(\int_{\mathbb{R}^N}|u_{n}|^{2}\eta_{R}dx\Big)^{a_{p}}
\Big(\int_{\mathbb{R}^N}|u_{n}|^{2^{\ast}}\eta_{R}dx\Big)^{b_{p}}\\
&\leq C_3(\frac{V_{\infty}}{\tau_{0}})^{a_{p}}
\zeta_{\infty}^{b_{p}},
\endaligned
\end{equation}
where $C_1, C_2, C_3$ depend only on the embedding constant and the best constant $S_{H,L}$, since Lemma \ref{F0} holds and
$$
(\frac12-\frac{1}{2p})\int_{\mathbb{R}^N}(|\nabla u_n|^{2}+V(x)|u_n|^{2})dx\leq c<c_0\leq\frac{N+2-\mu}{4N-2\mu}S_{H,L}^{\frac{2N-\mu}{N+2-\mu}}+o_n(1).
$$
Now, by the definition of $F_{\infty}$, from \eqref{p151} to \eqref{p153} we know
\begin{equation}\label{p16}
F_{\infty}\leq C(\frac{V_{\infty}}{\tau_{0}})^{a_{p}}
\zeta_{\infty}^{b_{p}}+C
\zeta_{\infty}^{\frac{2N-\mu}{2N}}.
\end{equation}
Similarly, we have
\begin{equation}\label{p166}\aligned
\nu_{\infty}&=\lim_{R\rightarrow\infty}
\overline{\lim}_{n\rightarrow\infty}
\int_{\mathbb{R}^N}
\Big(\int_{\mathbb{R}^N}
\frac{|u_{n}(y)|^{2_{\mu}^{\ast}}}{|x-y|^{\mu}}dy\Big)| u_{n}(x)|^{2_{\mu}^{\ast}}\eta_{R}dx\\
&\leq C(N,\mu)\lim_{R\rightarrow\infty}\overline{\lim}_{n\rightarrow\infty}
|u_{n}|^{2^{\ast}_\mu}_{2^{\ast}}
\Big(\int_{\mathbb{R}^N}|u_{n}|^{2^{\ast}}\eta_{R}dx\Big)^{\frac{2N-\mu}{2N}}\\
&= C_4\zeta_{\infty}^{\frac{2N-\mu}{2N}}.
\endaligned
\end{equation}
Substituting \eqref{p16} and \eqref{p166} into \eqref{p13} we obtain that
\begin{equation}\label{p11}
\omega_{\infty}+V_{\infty}\leq C_5(\frac{V_{\infty}}{\tau_{0}})^{a_{p}}
\zeta_{\infty}^{b_{p}}+C_6
\zeta_{\infty}^{\frac{2N-\mu}{2N}}.
\end{equation}
Now, if $\zeta_{\infty}=0$ then it is easy to see the conclusion
$$
\nu_{\infty}=\omega_{\infty}=F_{\infty}=V_{\infty}=0.
$$
Otherwise, if $\zeta_{\infty}>0$ then applying the Young inequality to \eqref{p11} we know that there exists $\Lambda_0>0$ such that
$$
\zeta_{\infty}\geq \Lambda_0.
$$
Thus applying Lemma \ref{Concentration-compactness principle2}, we know that
$$
\omega_{\infty}\geq S\Lambda_0^{\frac{2}{2^{\ast}}}.
$$
Thus we know this is a contradiction if
$$
c< \alpha' S\Lambda_0^{\frac{2}{2^{\ast}}}.
$$

From the arguments above, let
$$
c_0=\min\{\frac{N+2-\mu}{4N-2\mu}S_{H,L}^{\frac{2N-\mu}{N+2-\mu}}, \alpha' S\Lambda_0^{\frac{2}{2^{\ast}}}\},
$$
if $c<c_0$ then we have
\begin{equation}\label{SINESC}
\nu_{\infty}=\omega_{\infty}=F_{\infty}=V_{\infty}=0, \ I=\emptyset.
\end{equation}
By using Lemma \ref{CCP1} to derive
$$
\lim_{n\rightarrow\infty}
\int_{\mathbb{R}^N}\int_{\mathbb{R}^N}\frac{|u_n(x)|
^{2_{\mu}^{\ast}}|u_n(y)|^{2_{\mu}^{\ast}}}
{|x-y|^{\mu}}dxdy
=\int_{\mathbb{R}^N}\int_{\mathbb{R}^N}\frac{|u_0(x)|
^{2_{\mu}^{\ast}}|u_0(y)|^{2_{\mu}^{\ast}}}
{|x-y|^{\mu}}dxdy,
$$
which together with Lemma \ref{BLN} imply
$$
\lim_{n\rightarrow\infty}
\int_{\mathbb{R}^N}\int_{\mathbb{R}^N}\frac{|(u_n-u_0)(x)|
^{2_{\mu}^{\ast}}|(u_n-u_0)(y)|^{2_{\mu}^{\ast}}}
{|x-y|^{\mu}}dxdy=0.
$$
\end{proof}

\subsection{Proof of Theorem \ref{EX1}}
We can verify that the functional $J$ satisfies the Mountain-Pass geometry. By Lemma \ref{F0} we have
$$\aligned
J(u)&=\frac{1}{2}\int_{\mathbb{R}^N}(|\nabla u|^{2}+V(x)|u|^{2})dx-\frac{1}{2\cdot2_{\mu}^{\ast}}\int_{\mathbb{R}^N}
\int_{\mathbb{R}^N}\frac{(|u(x)|^{2_{\mu}^{\ast}}+|u(x)|^{p})(|u(y)|^{2_{\mu}^{\ast}}+|u(y)|^{p})}
{|x-y|^{\mu}}dxdy\\
&\geq C\|u\|_{V}^{2}-C_{1}\|u\|_{V}^{2\cdot2_{\mu}^{\ast}}-C_{2}\|u\|_{V}^{2_{\mu}^{\ast}+p}
-C_{3}\|u\|_{V}^{2p}.
\endaligned$$
Since $2<2p<2_{\mu}^{\ast}+p<2\cdot2_{\mu}^{\ast}$, we can choose some $\alpha,\rho>0$ such that $J(u)\geq\alpha$ for $\|u\|_{V}=\rho$. For any $u\in E\backslash\ \{0\}$, we have
$$
J(tu)\leq\frac{t^{2}}{2}\int_{\mathbb{R}^{N}}(|\nabla u|^{2}+ V(x)u^{2}dx-\frac{t^{2\cdot2_{\mu}^{\ast}}}{2\cdot2_{\mu}^{\ast}}
\int_{\mathbb{R}^N}\int_{\mathbb{R}^N}
\frac{|u(x)|^{2_{\mu}^{\ast}}|u(y)|^{2_{\mu}^{\ast}}}
{|x-y|^{\mu}}dxdy<0
$$
for $t>0$ large enough. Hence, we can apply the mountain pass theorem without $(PS)$ condition (cf. \cite{Wi}) to get a bounded $(PS)$ sequence $\{u_{n}\}$ such that $J(u_{n})\rightarrow c^{\star}$ and $ J'(u_{n})\rightarrow0$ in $E^{-1}$ at the minimax level
$$
c^{\star}=\inf\limits_{\gamma\in\Gamma}\max\limits_{t\in[0,1]}J(\gamma(t))>0,
$$
where
$$
\Gamma:=\{\gamma\in C([0,1],E):\gamma(0)=0,J(\gamma(1))<0\}.
$$

We claim that
 \begin{equation}\label{Infimum}
\inf\,\left\{\int_{\R^N}|\nabla\varphi|^2dx: \
\varphi\in \mathcal{C}^\infty_0(\R^N),\,  \int_{\R^N}\int_{\R^N}\frac{|\varphi(x)|^{p}|\varphi(y)|^{p}}{|x-y|^{\mu}}dxdy=1\right\} = 0.\end{equation}
In fact, for all fixed $\varphi$ satisfying
$$
 \int_{\R^N}\int_{\R^N}\frac{|\varphi(x)|^{p}|\varphi(y)|^{p}}{|x-y|^{\mu}}dxdy=1,
 $$
let us define
 $$
\varphi_t=t^{\frac{2N-\mu}{2p}}\varphi(tx),\ \ t>0,
 $$
then we have
 $$
 \int_{\R^N}\int_{\R^N}\frac{|\varphi_t(x)|^{p}|\varphi_t(y)|^{p}}{|x-y|^{\mu}}dxdy
 =\int_{\R^N}\int_{\R^N}\frac{|\varphi(x)|^{p}|\varphi(y)|^{p}}{|x-y|^{\mu}}dxdy=1
 $$
 and
 $$
 \int_{\R^N}|\nabla\varphi_t|^2dx=t^{\frac{2N-\mu}{p}-N+2} \int_{\R^N}|\nabla\varphi|^2dx.
 $$
Since $\frac{2N-\mu}{p}>N-2$, we know
 $$
 \int_{\R^N}|\nabla\varphi_t|^2dx\to0
 $$
 as $t\to0$, the claim is thus proved. Now, for any $\de>0$ one can choose $\va_\de\in\mathcal{C}^\infty_0(\R^N)$ such that
$$\int_{\R^N}\int_{\R^N}\frac{|\va_\de(x)|^{p}|\va_\de(y)|^{p}}
{|x-y|^{\mu}}dxdy=1$$ and
$$|\nabla\va_\de|^2_2<\de.$$
Since $\va_\de\in\mathcal{C}^\infty_0(\R^N)$ and $V(x)\in L^{\frac{N}{2}}(\R^N)$, we have
$$
\left|\int_{\mathbb{R}^N}V(x)|\va_\de|^{2}dx\right|\leq
\left(\int_{\mathbb{R}^N}|V(x)|^{\frac{N}{2}}dx\right)^{\frac{2}{N}}
\left(\int_{\mathbb{R}^N}|\va_\de|^{2^{\ast}}dx\right)^{\frac{N-2}{N}}
\leq CS\int_{\R^N}\big|\nabla \va_\de\big|^2dx,
$$
where $S$ is the best Sobolev constant. And so,
$$
J(\va_\de)\leq\frac{1+CS}{2}\int_{\R^N}\big|\nabla \va_\de\big|^2dx-\frac{1 }{2\cdot2_{\mu}^{\ast}}\int_{\R^N}\int_{\R^N}\frac{|\va_\de(x)|^{p}|\va_\de(y)|^{p}}
{|x-y|^{\mu}}dxdy.
$$
Denote
$$
\Psi(\va_\de):=\frac{1+CS}{2}\int_{\R^N}\big|\nabla \va_\de\big|^2dx-\frac{1 }{2\cdot2_{\mu}^{\ast}}\int_{\R^N}\int_{\R^N}\frac{|\va_\de(x)|^{p}|\va_\de(y)|^{p}}
{|x-y|^{\mu}}dxdy.
$$
Then, we know
\begin{equation}\label{ES1}
\aligned
 \max_{t\in \R^+}\Psi(t\va_\de)
 &=\max_{t\in \R^+}\left\{t^{2}\frac{1+CS}{2}\int_{\R^N}\big|\nabla \va_\de\big|^2dx-\frac{t^{2p} }{2\cdot2_{\mu}^{\ast}}\int_{\R^N}\int_{\R^N}\frac{|\va_\de(x)|^{p}|\va_\de(y)|^{p}}
{|x-y|^{\mu}}dxdy\right\}\\
&=\Big(\frac{2\cdot2_{\mu}^{\ast}}{p}\Big)^{\frac{1}{p-1}}\Big(1-\frac{1}{p}\Big)
\Big(\frac{1+CS}{2}\int_{\R^N}|\nabla\va_\de|^2dx\Big)^{\frac{p}{p-1}}\\
&<\Big(\frac{2\cdot2_{\mu}^{\ast}}{p}\Big)^{\frac{1}{p-1}}\Big(1-\frac{1}{p}\Big)
\Big(\frac{1+CS}{2}\Big)^{\frac{p}{p-1}}\delta^{\frac{p}{p-1}}.
\endaligned
\end{equation}
Thus, for $c_{0}>0$ be the number given in Proposition \ref{F1}, there exists $\delta_{0}>0$ such that, for any $0<\delta<\delta_{0}$
$$
\max_{t\in \R^+}\Psi(t\va_\de)<c_{0}
$$
i.e.
$$
 \max_{t\in \R^+}J(t\va_\de)<c_{0},
$$
and so
$$
c^{\star}<c_0.
$$

Assume that $\{u_{n}\}$ converges weakly and a.e. to some weak solution $u_{0}\in E$ of \eqref{CCe1}. In particular,
\begin{equation}\label{weak}
\int_{\mathbb{R}^N}(|\nabla u_{0}|^{2}+V(x)u_{0}^{2})dx
=\int_{\mathbb{R}^N}
\int_{\mathbb{R}^N}\frac{(|u_{0}(x)|^{2_{\mu}^{\ast}}+|u_{0}(x)|^{p})
(|u_{0}(y)|^{2_{\mu}^{\ast}}
+\frac{p}{2_{\mu}^{\ast}}|u_{0}(y)|^{p})}
{|x-y|^{\mu}}dxdy.
\end{equation}
Since $
c^{\star}<c_0
$, by Proposition \ref{F1}, we have
$$
\lim_{n\rightarrow\infty}\int_{\mathbb{R}^N}\int_{\mathbb{R}^N}\frac{|(u_{n}-u_{0})(x)|^{2_{\mu}^{\ast}}
|(u_{n}-u_{0})(y)|^{2_{\mu}^{\ast}}}
{|x-y|^{\mu}}dxdy=0
$$
and
$$
\nu_{\infty}=\omega_{\infty}=F_{\infty}=V_{\infty}=0, \ I=\emptyset.
$$
So, we have
$$
\lim_{n\rightarrow\infty}
\int_{\mathbb{R}^N}V|u_{n}-u_{0}|^{2}dx=0,
$$
and
$$
\lim_{n\rightarrow\infty}
\Big(\Big(\frac{p}{2_{\mu}^{\ast}}+1\Big)\int_{\mathbb{R}^N}\int_{\mathbb{R}^N}\frac{|u_n(x)|
^{2_{\mu}^{\ast}}|u_n(y)|^{p}}
{|x-y|^{\mu}}dxdy
+\frac{p}{2_{\mu}^{\ast}}\int_{\mathbb{R}^N}\int_{\mathbb{R}^N}\frac{|u_n(x)|
^{p}|u_n(y)|^{p}}
{|x-y|^{\mu}}dxdy\Big)
$$
$$
=
\Big(\frac{p}{2_{\mu}^{\ast}}+1\Big)\int_{\mathbb{R}^N}\int_{\mathbb{R}^N}\frac{|u_0(x)|
^{2_{\mu}^{\ast}}|u_0(y)|^{p}}
{|x-y|^{\mu}}dxdy
+\frac{p}{2_{\mu}^{\ast}}\int_{\mathbb{R}^N}\int_{\mathbb{R}^N}\frac{|u_0(x)|
^{p}|u_0(y)|^{p}}
{|x-y|^{\mu}}dxdy.
$$
It follows that
$$
\aligned
0&=\lim_{n\rightarrow\infty}\langle J'(u_n),u_n\rangle\\
&=\lim_{n\rightarrow\infty}\Big(\int_{\mathbb{R}^N}(|\nabla u_n|^{2}+Vu_n^{2})dx
-\int_{\mathbb{R}^N}
\int_{\mathbb{R}^N}\frac{(|u_n(x)|^{2_{\mu}^{\ast}}+|u_n(x)|^{p})(|u_n(y)|^{2_{\mu}^{\ast}}
+\frac{p}{2_{\mu}^{\ast}}|u_n(y)|^{p})}
{|x-y|^{\mu}}dxdy\Big)\\
&=\lim_{n\rightarrow\infty}\int_{\mathbb{R}^N}|\nabla u_n|^{2}dx+\int_{\mathbb{R}^N}Vu_0^{2}dx
-\int_{\mathbb{R}^N}
\int_{\mathbb{R}^N}\frac{(|u_0(x)|^{2_{\mu}^{\ast}}+|u_0(x)|^{p})(|u_0(y)|^{2_{\mu}^{\ast}}
+\frac{p}{2_{\mu}^{\ast}}|u_0(y)|^{p})}
{|x-y|^{\mu}}dxdy.
\endaligned
$$
Combining this with \eqref{weak}, we have
$$
\lim_{n\rightarrow\infty}\int_{\mathbb{R}^N}|\nabla u_n|^{2}dx=\int_{\mathbb{R}^N}|\nabla u_0|^{2}dx.
$$
Thus,
$$
J(u_0)=\lim_{n\rightarrow\infty} J(u_n)=c^{\star}\geq\alpha>0
$$
which leads to the conclusion $u_0\neq0$.
$\hfill{} \Box$

\section{High energy solution}
In this section we assume that conditions $(V_1)$, $(V_2)$ and $(V_3)$ hold, $0<\mu<\min\{4,N\}$ and $N\geq3$.
We introduce the energy functional associated to equation \eqref{CE2} by
$$
J_{V}(u)=\frac{1}{2}\int_{\mathbb{R}^N}(|\nabla u|^{2}+ V(x)|u|^{2})dx-\frac{1}{2\cdot2_{\mu}^{\ast}}\int_{\mathbb{R}^N}\int_{\mathbb{R}^N}\frac{|u(x)|^{2_{\mu}^{\ast}}|u(y)|^{2_{\mu}^{\ast}}}
{|x-y|^{\mu}}dxdy.
$$
The Hardy--Littlewood--Sobolev inequality implies that $J_{V}$ is well defined on $D^{1,2}(\mathbb{R}^N)$ and belongs to $\mathcal{C}^{1}$. And so $u$ is a weak solution of \eqref{CE2} if and only if $u$ is a critical point of the functional $J_{V}$. To carry out the proof, we need to consider the energy functional associated to equation \eqref{CCE1} defined by
$$
J_{0}(u)=\frac{1}{2}\int_{\mathbb{R}^N}|\nabla u|^{2}dx-\frac{1}{2\cdot2_{\mu}^{\ast}}\int_{\mathbb{R}^N}
\int_{\mathbb{R}^N}\frac{|u(x)|^{2_{\mu}^{\ast}}|u(y)|^{2_{\mu}^{\ast}}}{|x-y|^{\mu}}dxdy.
$$

\subsection{A nonlocal global compactness lemma}
Let
$u\rightarrow u_{r,x_{0}}=r^{\frac{N-2}{2}}u(rx+x_{0})$ be the rescaling, where $r\in\mathbb{R}^{+}$ and $x_{0}\in \mathbb{R}^{N}$.
The following proposition is taken from \cite{DGY} which is inspired by \cite{Sm, Wi}, we sketch the proof here for readers' convenience.
\begin{lem}\label{F4} Suppose that conditions $(V_1)$, $(V_2)$ and $(V_3)$ hold and $N\geq3$, $0<\mu<\min\{4,N\}$. Assume that $\{u_{n}\}\subset D^{1,2}(\mathbb{R}^N)$ is a $(PS)$ sequence for $J_{V}$. Then there exist a number $k\in \N$, a solution $u^{0}$ of \eqref{CE2}, solutions $u^{1},... ,u^{k}$ of \eqref{CCE1}, sequences of points $x_{n}^{1},...,x_{n}^{k}\in\mathbb{R}^N$ and radii $r_{n}^{1},...,r_{n}^{k}>0$ such that for some subsequence $n\rightarrow\infty$
$$
\displaystyle\aligned
&u_{n}^{0}\equiv u_{n}\rightharpoonup u^{0}\ \ \ \mbox{weakly in}\ \ D^{1,2}(\mathbb{R}^N),\\
&u_{n}^{j}\equiv (u_{n}^{j-1}-u^{j-1})_{r_{n}^{j},x_{n}^{j}}\rightharpoonup u^{j}\ \ \ \mbox{weakly in}\ \ D^{1,2}(\mathbb{R}^N),\ \ j=1,...,k.
\endaligned
$$
Moreover as $n\rightarrow\infty$
$$
\displaystyle\aligned
&\|u_{n}\|^{2}\rightarrow \displaystyle\Sigma_{j=0}^{k}\|u^{j}\|^{2},\\
& J_{V}(u_{n})\rightarrow J_{V}(u^{0})+\displaystyle\Sigma_{j=1}^{k}J_{0}(u^{j}).
\endaligned
$$
\end{lem}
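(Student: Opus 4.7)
The plan is to follow the Struwe-type bubble-extraction strategy, adapted to the nonlocal setting. Boundedness of $\{u_n\}$ in $D^{1,2}(\R^N)$ is obtained by combining $J_V(u_n)\to c$ with $\langle J_V'(u_n),u_n\rangle=o(\|u_n\|)$ and controlling the $V$-term via $(V_3)$ and Sobolev. Extracting $u_n\rightharpoonup u^0$ along a subsequence, one checks $J_V'(u^0)=0$ by testing against $\varphi\in \mathcal{C}_0^\infty(\R^N)$: the nonlocal quartic term is weakly continuous by Hardy--Littlewood--Sobolev together with the local compactness of $D^{1,2}\hookrightarrow L^{2^\ast}_{\mathrm{loc}}$, and the linear $V$-term by the compactness of $u\mapsto Vu$ from $D^{1,2}(\R^N)$ into its dual, which uses $V\in L^{N/2}$.

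Setting $w_n^1:=u_n-u^0$, the Brezis--Lieb splitting for $\int|\nabla\cdot|^2$ combined with Lemma \ref{BLN} for the nonlocal quartic term and the same compactness of the $V$-term yields that $\{w_n^1\}$ is a $(PS)_{c_1}$ sequence for $J_0$, with $c_1=c-J_V(u^0)$. If $w_n^1\to 0$ strongly, we are done with $k=0$. Otherwise we select concentration parameters $(r_n^1,x_n^1)$ via the standard L\'evy concentration function of $|\nabla w_n^1|^2$ (or, equivalently, of the nonlocal density) realizing a fixed threshold $\lambda_0>0$ at scale $r_n^1$ around $x_n^1$. Since both the Dirichlet energy and the nonlocal quartic term are invariant under $u\mapsto r^{(N-2)/2}u(rx+x_0)$, the sequence $u_n^1:=(w_n^1)_{r_n^1,x_n^1}$ is bounded in $D^{1,2}(\R^N)$ with $J_0(u_n^1)=J_0(w_n^1)\to c_1$, and the choice of $(r_n^1,x_n^1)$ prevents $u_n^1\rightharpoonup u^1$ from vanishing.

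To verify that $u^1$ solves the clean limit equation \eqref{CCE1}, the key technical step is to show that the rescaled $V$-perturbation vanishes in $\mathcal{D}'(\R^N)$: for $\varphi\in \mathcal{C}_0^\infty(\R^N)$ one pairs $J_V'(w_n^1)$ with the rescaled test function $\tilde\varphi_n(y):=r_n^{-(N-2)/2}\varphi\bigl(\tfrac{y-x_n^1}{r_n^1}\bigr)$ and examines
$$
\int_{\R^N} V(y)\,w_n^1(y)\,\tilde\varphi_n(y)\,dy,
$$
which tends to $0$ because the support of $\tilde\varphi_n$ either shrinks (when $r_n^1\to 0$) or escapes to infinity (when $|x_n^1|\to\infty$), so that $|V|_{L^{N/2}(\mathrm{supp}\,\tilde\varphi_n)}\to 0$ by absolute continuity of $|V|^{N/2}$ in $L^1(\R^N)$; the remaining alternatives are excluded by the fact that $u^0$ already captures the stationary part of the PS sequence. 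The restriction $\mu<\min\{4,N\}$ provides the parallel decay control on the rescaled nonlocal tail. Iterating with $w_n^2:=u_n^1-u^1$ and so on, the procedure terminates after finitely many steps because every nontrivial solution of \eqref{CCE1} satisfies the uniform energy lower bound $J_0(u^j)\ge\tfrac{N+2-\mu}{4N-2\mu}\,S_{H,L}^{(2N-\mu)/(N+2-\mu)}$, so only finitely many bubbles lie below the total energy $c$. The two displayed identities follow by iterating Brezis--Lieb for the Dirichlet norm and Lemma \ref{BLN} for the nonlocal term, together with the weak-to-strong convergence of the $V$-contribution along each residual.

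The main obstacle I expect is the \emph{decoupling step}: the simultaneous vanishing of both the rescaled $V$-term and the rescaled Riesz tail along the bubble scales $(r_n^1,x_n^1)$. Unlike the local critical term in the Struwe argument, the kernel $|x-y|^{-\mu}$ does not respect localization pointwise, so controlling its interaction with the rescaling while $V\in L^{N/2}$ is only critically integrable requires a delicate splitting of the convolution into inner and outer contributions and is precisely where the dimension restriction $\mu<\min\{4,N\}$ enters.
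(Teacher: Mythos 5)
Your proposal follows the same Struwe-type bubble-extraction strategy as the paper, and most of the skeleton is correct (boundedness, weak limit $u^0$, Brezis--Lieb plus Lemma~\ref{BLN} for the residual, L\'evy concentration to choose $(r_n^1,x_n^1)$, iteration with the uniform lower bound $\|u^j\|^2\ge S_{H,L}^{(2N-\mu)/(N-\mu+2)}$). However, the "main obstacle" you single out --- decoupling the rescaled $V$-term from the rescaled Riesz tail --- is not actually present in the paper's argument, and the way you propose to handle it introduces a gap. The paper disposes of the $V$-term \emph{before} any rescaling: since $v_n^1=u_n-u^0\rightharpoonup 0$ weakly and $V\in L^{N/2}$, the compactness of the multiplication operator (this is (2.16) in \cite{BC1}) gives $\int V|v_n^1|^2\to 0$, hence $J_0(v_n^1)=J_V(v_n^1)+o(1)$ and $J_0'(v_n^1)=o(1)$. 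From that point on one works entirely with the scale-invariant functional $J_0$, so after setting $h_n=(v_n^1)_{r_n,y_n}$ one has $J_0'(h_n)=J_0'(v_n^1)=o(1)$ automatically and $h$ solves \eqref{CCE1} by weak sequential continuity of $J_0'$ --- no rescaled $V$-tail to track. Your alternative, pairing $J_V'(w_n^1)$ with $\tilde\varphi_n$ and dichotomizing on whether $\mathrm{supp}\,\tilde\varphi_n$ shrinks or escapes, leaves the case $r_n^1\to r_\infty>0$, $x_n^1$ bounded to be "excluded by the fact that $u^0$ already captures the stationary part," which as stated is not an argument; the paper's ordering of operations makes this dichotomy unnecessary.

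Relatedly, you misattribute the role of $\mu<\min\{4,N\}$: it is not about "decay control on the rescaled nonlocal tail." In the paper it enters in the nonvanishing step. Having selected $b$ via the concentration function, one must show $h\neq 0$; assuming $h=0$ one estimates $\int|\nabla(\psi h_n)|^2$ via Hardy--Littlewood--Sobolev and Sobolev, and the absorption works because the factor $|h_n|_{L^{2^\ast}(B_1(y))}^{2_\mu^\ast-2}$ carries the exponent $2_\mu^\ast-2=(4-\mu)/(N-2)$, which is positive precisely when $\mu<4$ (and $\mu<N$ for the HLS kernel). This positivity is what allows the threshold $b$ to be chosen small enough to close the estimate $\int|\nabla(\psi h_n)|^2\le\frac12\int|\nabla(\psi h_n)|^2+o(1)$, giving the contradiction with $\int_{B_1(0)}|h_n|^{2^\ast}=b>0$. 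You should incorporate this into your write-up and reorganize so that the reduction to $J_0$ happens at the residual level, before the L\'evy rescaling.
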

\begin{proof}
Since $\{u_{n}\}$ is a $(PS)$ sequence for $J_{V}$, we know easily that it is bounded in $D^{1,2}(\mathbb{R}^N)$. Hence we may assume that $u_{n}\rightharpoonup u^{0}$ weakly in $D^{1,2}(\mathbb{R}^N)$ as $n\rightarrow\infty$ and
that $u^{0}$ is a weak solution of \eqref{CE2}. So if we put
$$
v_{n}^{1}(x)=(u_{n}-u^{0})(x),
$$
then $v_{n}^{1}$ is a $(PS)$ sequence for $J_{V}$ satisfying
$$
v_{n}^{1}\rightharpoonup0\ \ \ \mbox{weakly in}\ \ D^{1,2}(\mathbb{R}^N).
$$
Then, together with the Br\'{e}zis-Lieb Lemma \cite{BL1} and (2.16) in \cite{BC1} that
\begin{equation}\label{b26}
\int_{\mathbb{R}^N}V(x)|v_{n}^{1}|^{2}dx\rightarrow0,
\end{equation}
 we have
\begin{equation}\label{b27}
J_{0}(v_{n}^{1})=J_{V}(v_{n}^{1})+o(1)=J_{V}(u_{n})-J_{V}(u^{0})+o(1),
\end{equation}
\begin{equation}\label{b28}
J_{0}'(v_{n}^{1})=J_{V}'(v_{n}^{1})+o(1)=o(1).
\end{equation}

If $v_{n}^{1}\rightarrow0$ strongly in $D^{1,2}(\mathbb{R}^N)$ we are done.
Now suppose
that
$$
v_{n}^{1}\nrightarrow0\ \ \ \mbox{strongly in}\ \ D^{1,2}(\mathbb{R}^N)
$$
and there exists $\gamma\in(0,\infty)$ such that
\begin{equation}\label{b29}
J_{0}(v_{n}^{1})\geq\gamma>0
\end{equation}
for $n$ large enough.

 Claim: there exist sequences $\{r_{n}\}$ and $\{y_{n}\}$ of points in $\mathbb{R}^N$ such that
\begin{equation}\label{b4}
h_{n}=(v_{n}^{1})_{r_{n},y_{n}}\rightharpoonup h\not\equiv0 \ \ \ \mbox{weakly in}\ \ D^{1,2}(\mathbb{R}^N)
\end{equation}
as $n\rightarrow\infty$.

In fact, by \eqref{b28}, we obtain
$$
J_{0}(v_{n}^{1})=\frac{N-\mu+2}{2(2N-\mu)}\int_{\mathbb{R}^N}\int_{\mathbb{R}^N}
\frac{|v_{n}^{1}(x)|^{2_{\mu}^{\ast}}|v_{n}^{1}(y)|^{2_{\mu}^{\ast}}}
{|x-y|^{\mu}}dxdy+o(1).
$$
So, by the Hardy--Littlewood--Sobolev inequality, \eqref{b29} and the boundedness of $\{u_{n}\}$, we know that $0<a_{1}<|v_{n}^{1}|_{2^{\ast}}^{2_{\mu}^{\ast}}<A_{1}$ for some $a_{1}, A_{1}>0$.
Let us define the Levy concentration function:
$$
Q_{n}(r):=\sup_{z\in\mathbb{R}^N}\int_{B_{r}(z)}|v_{n}^{1}(x)|^{2^{\ast}}dx.
$$
Since $Q_{n}(0)=0$ and $Q_{n}(\infty)>a_{1}^{\frac{2N}{2N-\mu}}$, we may assume there exists sequences $\{r_{n}\}$ and $\{y_{n}\}$ of points in $\mathbb{R}^N$ such that $r_{n}>0$ and
$$
\sup_{z\in\mathbb{R}^N}\int_{B_{r_{n}}(z)}|v_{n}^{1}(x)|^{2^{\ast}}dx
=\int_{B_{r_{n}}(y_{n})}|v_{n}^{1}(x)|^{2^{\ast}}dx=b
$$
for some
$$
0<b<\min\left\{\frac{S^{\frac{2N}{4-\mu}}}
{(2C(N,\mu)A_{1})^{\frac{2N}{4-\mu}}},a_{1}^{\frac{2N}{2N-\mu}}\right\}.
$$

Let us define $h_{n}:=(v_{n}^{1})_{r_{n},y_{n}}$. We may assume that $h_{n}\rightharpoonup h$ weakly in $D^{1,2}(\mathbb{R}^N)$ and $h_{n}\rightarrow h$ a.e. on $\mathbb{R}^N$. It is easy to see that
$$
\sup_{z\in\mathbb{R}^N}\int_{B_{1}(z)}|h_{n}(x)|^{2^{\ast}}dx
=\int_{B_{1}(0)}|h_{n}(x)|^{2^{\ast}}dx=b.
$$
By invariance of the $D^{1,2}(\mathbb{R}^N)$ norms under translation and dilation, we get
$$
\|v_{n}^{1}\|=\|h_{n}\|, \ \ |v_{n}^{1}|_{2^{\ast}}=|h_{n}|_{2^{\ast}}
$$
and
$$
\int_{\mathbb{R}^N}\int_{\mathbb{R}^N}\frac{|v_{n}^{1}(x)|^{2_{\mu}^{\ast}}|v_{n}^{1}(y)|^{2_{\mu}^{\ast}}}
{|x-y|^{\mu}}dxdy=\int_{\mathbb{R}^N}\int_{\mathbb{R}^N}\frac{|h_{n}(x)|^{2_{\mu}^{\ast}}
|h_{n}(y)|^{2_{\mu}^{\ast}}}
{|x-y|^{\mu}}dxdy.
$$
By direct calculation, we have
\begin{equation}\label{b31}
J_{0}(h_{n})=J_{0}(v_{n}^{1})=J_{V}(u_{n})-J_{V}(u^{0})+o(1)
\end{equation}
and
\begin{equation}\label{b32}
J_{0}'(h_{n})=J_{0}'(v_{n}^{1})=o(1).
\end{equation}
If $h=0$ then $h_{n}\rightarrow0$ strongly in $L_{loc}^{2}(\mathbb{R}^N)$. Let $\psi\in \mathcal{C}_{0}^{\infty}(\mathbb{R}^N)$ be such that $Supp \psi\subset B_{1}(y)$ for some $y\in\mathbb{R}^N$. Then, we have
$$
\aligned
\int_{\mathbb{R}^N}|\nabla(\psi h_{n})|^{2}dx&=\int_{\mathbb{R}^N}\nabla h_{n}\nabla(\psi^{2} h_{n})dx+o(1)\\
&=\int_{\mathbb{R}^N}\int_{\mathbb{R}^N}
\frac{|h_{n}(x)|^{2_{\mu}^{\ast}}|\psi(y)|^{2}|h_{n}(y)|^{2_{\mu}^{\ast}}}
{|x-y|^{\mu}}dxdy+o(1)\\
&\leq C(N,\mu)
|h_{n}|_{2^{\ast}}^{2_{\mu}^{\ast}}\Big(\int_{\mathbb{R}^N}
(|\psi|^{2}|h_{n}|^{2_{\mu}^{\ast}})^{\frac{2N}{2N-\mu}}dx\Big)^{\frac{2N-\mu}{2N}}+o(1)\\
&= C(N,\mu)
|h_{n}|_{2^{\ast}}^{2_{\mu}^{\ast}}\Big(\int_{\mathbb{R}^N}
|\psi h_{n}|^{\frac{4N}{2N-\mu}}
|h_{n}|^{\frac{2N(4-\mu)}{(2N-\mu)(N-2)}}dx\Big)^{\frac{2N-\mu}{2N}}+o(1)\\
&\leq C(N,\mu)|h_{n}|_{2^{\ast}}^{2_{\mu}^{\ast}}
|h_{n}|_{L^{2^{\ast}}(B_{1}(y))}^{2_{\mu}^{\ast}-2}
\frac{1}{S}\int_{\mathbb{R}^N}|\nabla(\psi h_{n})|^{2}dx+o(1)\\
&\leq C(N,\mu)b^{\frac{2_{\mu}^{\ast}-2}{2^{\ast}}}
\frac{A_{1}}{S}\int_{\mathbb{R}^N}|\nabla(\psi h_{n})|^{2}dx+o(1)\\
&\leq\frac{1}{2}\int_{\mathbb{R}^N}|\nabla(\psi h_{n})|^{2}dx+o(1)
\endaligned
$$
thanks to $0<\mu<\min\{4,N\}$. We obtain $\nabla h_{n}\rightarrow0$ strongly in $L_{loc}^{2}(\mathbb{R}^N)$ and $h_{n}\rightarrow0$ strongly in $L_{loc}^{2^{\ast}}(\mathbb{R}^N)$, which contradicts with $\displaystyle\int_{B_{1}(0)}|h_{n}(x)|^{2^{\ast}}dx=b>0$. So, $h\neq0$. By \eqref{b28} and weakly sequentially continuous $J_{0}'$, we know $h$ solves \eqref{CCE1} weakly. The sequences $\{h_{n}\}$, $\{r_{n}^{1}\}$, and $\{y_{n}^{1}\}$ are the wanted sequences.

By iteration, we obtain sequences $v_{n}^{j}=u_{n}^{j-1}-u^{j-1}$, $j\geq2$, and the rescaled functions $u_{n}^{j}=(v_{n}^{j})_{r_{n}^{j},y_{n}^{j}}\rightharpoonup u^{j}$ weakly in $ D^{1,2}(\mathbb{R}^N)$, where
each $u^{j}$ solves \eqref{CCE1}. By induction we know that
\begin{equation}\label{b5}
\|u_{n}^{j}\|^{2}=
\|u_{n}\|^{2}-\Sigma_{i=0}^{j-1}\|u^{i}\|^{2}+o(1)
\end{equation}
and
\begin{equation}\label{b6}
J_{0}(u_{n}^{j})
=J_{V}(u_{n})-J_{V}(u^{0})-\Sigma_{i=1}^{j-1}J_{0}(u^{i})+o(1).
\end{equation}
Furthermore, from the estimate
$$
0=\langle J_{0}'(u^{j}),u^{j}\rangle=\|u^{j}\|^{2}-\int_{\mathbb{R}^N}
\int_{\mathbb{R}^N}\frac{|u^{j}(x)|^{2_{\mu}^{\ast}}|u^{j}(y)|^{2_{\mu}^{\ast}}}{|x-y|^{\mu}}dxdy
\geq\|u^{j}\|^{2}(1-S_{H,L}^{-2_{\mu}^{\ast}}\|u^{j}\|^{2\cdot2_{\mu}^{\ast}-2}),
$$
we see that $\|u^{j}\|^{2}\geq S_{H,L}^{\frac{2N-\mu}{N-\mu+2}}$ and the iteration must terminate at some index $k\geq0$ due to \eqref{b5}.
\end{proof}

Let
$$
P(u)=\int_{\mathbb{R}^N}(|\nabla u|^{2}+ V(x)|u|^{2})dx
$$
and
$$
\mathcal{M}=\{u\in D^{1,2}(\mathbb{R}^N):\|u\|_{NL}=1\}.
$$
\begin{Prop}\label{NE}
Suppose that conditions $(V_1)$, $(V_2)$ and $(V_3)$ hold. Then the minimization problem
\begin{equation}\label{a3}
\inf\{P(u):u\in\mathcal{M}\}
\end{equation}
has no solution.

\end{Prop}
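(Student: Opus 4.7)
The plan is to first bound $\mu := \inf_{\mathcal{M}} P \leq S_{H,L}$ by translating Aubin--Talenti extremals to infinity, and then to rule out any minimiser by combining the Euler--Lagrange equation with the extremal classification of Proposition \ref{ExFu} and the positivity assumption $(V_1)$.

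For the upper bound, I take $w := U_{1,0}/\|U_{1,0}\|_{NL}\in\mathcal{M}$ from Proposition \ref{ExFu}, so $\int_{\R^N}|\nabla w|^2\,dx = S_{H,L}$, and form the translates $w_n(x) := w(x - x_n)$ with $|x_n|\to\infty$. Translation invariance of both $\|\cdot\|_{NL}$ and the Dirichlet norm keeps $w_n\in\mathcal{M}$ with $\int|\nabla w_n|^2 = S_{H,L}$. Writing $\int V w_n^2\,dx = \int V(y+x_n)\,w(y)^2\,dy$ and approximating $V$ in $L^{N/2}$ by functions in $L^{N/2}\cap C_c(\R^N)$, whose translates vanish against the fixed $w^2\in L^{N/(N-2)}$, I conclude via H\"older that $\int V w_n^2\,dx\to 0$; thus $P(w_n)\to S_{H,L}$ and $\mu\leq S_{H,L}$.

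Suppose for contradiction that $\bar u\in\mathcal{M}$ attains $\mu$. Replacing $\bar u$ by $|\bar u|$ (which preserves $\|\cdot\|_{NL}$ and $P$) I may take $\bar u\geq 0$. The Lagrange multiplier rule yields $\Lambda\in\R$ with
\begin{equation*}
-\Delta\bar u + V\bar u = \Lambda\Bigl(\int_{\R^N}\frac{\bar u(y)^{2_{\mu}^{\ast}}}{|x-y|^{\mu}}\,dy\Bigr)\bar u^{2_{\mu}^{\ast}-1},
\end{equation*}
and pairing with $\bar u$ gives $\Lambda = P(\bar u) = \mu$. Moser iteration and the strong maximum principle then yield $\bar u\in L^\infty$ with $\bar u>0$ on $\R^N$. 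Since $\bar u\in\mathcal{M}$, the definition \eqref{S1} forces $\int|\nabla\bar u|^2\geq S_{H,L}$, so combined with $\mu\leq S_{H,L}$ I obtain $\int V\bar u^2 = \mu - \int|\nabla\bar u|^2 \leq 0$.

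The contradiction is then obtained once $\bar u$ is identified as an Aubin--Talenti extremal, which requires the rigidity $\int|\nabla\bar u|^2 = S_{H,L}$. I plan to establish this via a quantitative comparison against the test family $(\bar u + \theta w_n)/\|\bar u + \theta w_n\|_{NL}\in\mathcal{M}$: Lemma \ref{BLN} and the vanishing of the cross terms $\int\nabla\bar u\cdot\nabla w_n$, $\int V\bar u\,w_n$, $\int V w_n^2$ as $|x_n|\to\infty$ yield an asymptotic expansion of $P$ along this family in terms of $\mu$, $S_{H,L}$ and the parameter $\theta$; the sharp threshold $2^{(N+2-\mu)/(2N-\mu)}-1$ in $(V_3)$, together with the H\"older bound $|\int V\bar u^2|\leq S^{-1}|V|_{N/2}\int|\nabla\bar u|^2$, then forces the equality. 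Proposition \ref{ExFu} thereupon gives $\bar u = U_{\delta,z}/\|U_{\delta,z}\|_{NL}$ for some $\delta,z$, and a direct computation using \eqref{CCE1} shows this function satisfies $-\Delta\bar u = S_{H,L}(|x|^{-\mu}\ast\bar u^{2_{\mu}^{\ast}})\bar u^{2_{\mu}^{\ast}-1}$. Subtracting from the Euler--Lagrange equation produces $V(x)\bar u(x) = (\mu-S_{H,L})(|x|^{-\mu}\ast\bar u^{2_{\mu}^{\ast}})\bar u^{2_{\mu}^{\ast}-1}\leq 0$; since $\bar u>0$, this gives $V\leq 0$ on $\R^N$, contradicting $V(0)\geq\nu>0$ from $(V_1)$. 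The principal obstacle is precisely this rigidity step, in close analogy with the $2^{2/N}-1$ threshold governing the corresponding step in the local argument of Benci and Cerami \cite{BC1}.
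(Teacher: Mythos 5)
The crux of your argument, which you yourself flag as the ``principal obstacle,'' is the rigidity step $\int|\nabla\bar u|^2=S_{H,L}$, and as sketched it does not go through. Your test family $(\bar u+\theta w_n)/\|\bar u+\theta w_n\|_{NL}$ gives, after letting $n\to\infty$ and using Lemma \ref{BLN}, the inequality
$$\mu+\theta^2 S_{H,L}\ \geq\ \mu\bigl(1+\theta^{2\cdot 2_\mu^\ast}\bigr)^{1/2_\mu^\ast}\quad\text{for all }\theta\geq0,$$
which, since $(1+t)^{1/q}\leq 1+t^{1/q}$, reduces precisely to $\mu\leq S_{H,L}$ as $\theta\to\infty$ and contains no further information; combined with the H\"older bound $|\int V\bar u^2|\leq S^{-1}|V|_{N/2}\int|\nabla\bar u|^2$ and the threshold $2^{(N+2-\mu)/(2N-\mu)}-1$ from $(V_3)$, one only obtains an upper bound on $\int|\nabla\bar u|^2$ strictly larger than $S_{H,L}$, not the equality you need. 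Moreover the exponent appearing in $(V_3)$, namely $\tfrac{N+2-\mu}{2N-\mu}=1-\tfrac{1}{2_\mu^\ast}$, is not the one that the expansion in $\theta$ would naturally produce (that would be $\tfrac{1}{2_\mu^\ast}=\tfrac{N-2}{2N-\mu}$), so the claimed ``sharp threshold'' match does not occur.

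The missing idea is that the paper (following Benci and Cerami \cite{BC1}) works under the implicit standing hypothesis $V\geq0$ on $\mathbb{R}^N$, which the paper's own proof of this proposition uses verbatim (``Since $V(x)\geq0$ on $\mathbb{R}^N$ and $V(x)>0$ in a positive measure set''). Once $V\geq0$ is in force, your step $\int V\bar u^2\leq0$ immediately yields $\int V\bar u^2=0$, whence $\mu=\int|\nabla\bar u|^2$; combined with $\int|\nabla\bar u|^2\geq S_{H,L}$ (from $\bar u\in\mathcal{M}$) and $\mu\leq S_{H,L}$ this gives $\int|\nabla\bar u|^2=S_{H,L}$, i.e.\ the rigidity you are after, with no quantitative machinery. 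From there the identification of $\bar u$ as a rescaled $U_{\delta,z}$ via Proposition \ref{ExFu} and the contradiction with $V>0$ near the origin go through; your translation-to-infinity argument for the upper bound $\mu\leq S_{H,L}$ is also valid (the paper instead sends $\delta\to0$ in $\varphi_{\delta,0}$, which achieves the same using $(V_2)$), and your final step $V\bar u=(\mu-S_{H,L})(|x|^{-\mu}\ast\bar u^{2_\mu^\ast})\bar u^{2_\mu^\ast-1}\leq0$ everywhere is a serviceable variant of the paper's shorter observation that $\int V|U_{\delta,z}|^2>0$.
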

\begin{proof}
Let denote by $S_{\mathcal{M}}$ the infimum defined by \eqref{a3}. Obviously $S_{\mathcal{M}}\geq S_{H,L}$. First we shall show that actually the equality holds. Let us consider
the sequence
$$
\varphi_{\frac{1}{n},0}(x)=S_{H,L}^{\frac{2-N}{2(N-\mu+2)}}U_{\frac{1}{n},0}(x)
=S_{H,L}^{\frac{2-N}{4}}C(N,\mu)^{\frac{N(2-N)}{4(2N-\mu)}}
\frac{[N(N-2)\frac{1}{n}]^{\frac{N-2}{4}}}{(\frac{1}{n}+|x|^{2})^{\frac{N-2}{2}}}
$$
then $\forall p\in(\frac{N}{N-2},\frac{2N}{N-2})$, $|\varphi_{\frac{1}{n},0}(x)|_{p}\rightarrow0$ (see (2.4), \cite{BC1}), in fact
$$\aligned
|\varphi_{\frac{1}{n},0}(x)|_{p}^{p}&=\left[\frac{N(N-2)}{S_{H,L}}\right]^{\frac{(N-2)p}{4}}
C(N,\mu)^{\frac{N(2-N)p}{4(2N-\mu)}}
\int_{\mathbb{R}^{N}}\frac{(\frac{1}{n})^{\frac{(N-2)p}{4}}}{(\frac{1}{n}+|x|^{2})^{\frac{(N-2)p}{2}}}dx\\
&=\left[\frac{N(N-2)}{S_{H,L}}\right]^{\frac{(N-2)p}{4}}
C(N,\mu)^{\frac{N(2-N)p}{4(2N-\mu)}}\left(\frac{1}{n}\right)^{\frac{N}{2}-\frac{(N-2)p}{4}}
\int_{\mathbb{R}^{N}}\frac{1}{(1+|x|^{2})^{\frac{(N-2)p}{2}}}dx.
\endaligned$$
Moreover using the definition of $S_{H,L}$ and the fact that $U_{\frac{1}{n},0}$ solves \eqref{CCE1} it is easy to verify that
$$
\varphi_{\frac{1}{n},0}\in\mathcal{M},\ i.e.\ \|\varphi_{\frac{1}{n},0}\|_{NL}=1.
$$
Now using the H\"{o}lder inequality with $p\in(\frac{N}{2},p_{2})$ we get
$$
P(\varphi_{\frac{1}{n},0})=\int_{\mathbb{R}^{N}}|\nabla\varphi_{\frac{1}{n},0}|^{2}dx+
\int_{\mathbb{R}^{N}}V(x)|\varphi_{\frac{1}{n},0}|^{2}dx\leq S_{H,L}+|V(x)|_{p}|\varphi_{\frac{1}{n},0}(x)|_{2p'}^{2}.
$$
Since $2p'\in(\frac{N}{N-2},\frac{2N}{N-2})$, we can obtain $S_{\mathcal{M}}=S_{H,L}$.

Now it is easy to prove the nonexistence result arguing by contradiction.
Let $u\in\mathcal{M}$ be a function such that
$$
P(u)=S_{H,L}.
$$
If $\displaystyle\int_{\mathbb{R}^{N}}V(x)|u|^{2}dx>0$, then we have
$$
\int_{\mathbb{R}^{N}}|\nabla u|^{2}dx<\int_{\mathbb{R}^{N}}|\nabla u|^{2}dx+
\int_{\mathbb{R}^{N}}V(x)|u|^{2}dx=S_{H,L}
$$
contradicting the definition of $S_{H,L}$. If $\displaystyle\int_{\mathbb{R}^{N}}V(x)|u|^{2}dx=0$, then
$$
\int_{\mathbb{R}^{N}}|\nabla u|^{2}dx=S_{H,L}
$$
and $S_{H,L}^{\frac{N-2}{2(N-\mu+2)}}u$ is a solution of \eqref{CCE1}. Recall that any solution of \eqref{CCE1} must be of the form
$$
U_{\delta,z}(x)=C(N,\mu)^{\frac{2-N}{2(N-\mu+2)}}S^{\frac{(N-\mu)(2-N)}{4(N-\mu+2)}}
\frac{[N(N-2)\delta]^{\frac{N-2}{4}}}{(\delta+|x-z|^{2})^{\frac{N-2}{2}}}, \ \delta>0, \ z\in\mathbb{R}^{N},
$$
then we know
$$
u=C(N,\mu)^{\frac{2-N}{2(2N-\mu)}}S^{\frac{2-N}{4}}
\frac{[N(N-2)\delta^{1}]^{\frac{N-2}{4}}}{(\delta^{1}+|x-z^{1}|^{2})^{\frac{N-2}{2}}}
$$
for some $\delta^{1}>0$ and $z^{1}\in\mathbb{R}^{N}$. Since $V(x)\geq0$ on $\mathbb{R}^{N}$ and $V(x) > 0$ in a positive measure set, we have
$$
\int_{\mathbb{R}^{N}}V(x)|u|^{2}dx>0,
$$
which contradicts with $\displaystyle\int_{\mathbb{R}^{N}}V(x)|u|^{2}dx=0$.

So in conclusion, we know that $S_{\mathcal{M}}$ is not attained.
\end{proof}

\begin{cor}\label{PS3}
The functional $P|_{\mathcal{M}}$ satisfies the $(PS)_{c}$-condition for $c\in(S_{H,L},2^{\frac{N+2-\mu}{2N-\mu}}S_{H,L})$.
\end{cor}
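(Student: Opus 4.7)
The plan is to reduce the constrained $(PS)_c$-condition on $\cm$ to the unconstrained compactness analysis of Lemma~\ref{F4}. Given $\{u_n\}\subset\cm$ with $P(u_n)\to c$ and $(P|_{\cm})'(u_n)\to 0$, the Lagrange multiplier rule yields
\[P'(u_n)-\lambda_n F'(u_n)=o(1)\quad\text{in}\quad (D^{1,2}(\R^N))^{-1};\]
pairing with $u_n$ and using $F(u_n)=1$ gives $\lambda_n=P(u_n)/2_{\mu}^{\ast}\to c/2_{\mu}^{\ast}$. Assumption $(V_3)$ forces $|V|_{N/2}<S$, so $P(u_n)\geq(1-|V|_{N/2}/S)\|u_n\|^2$ with $\|\cdot\|$ the $D^{1,2}(\R^N)$-norm, which bounds $\{u_n\}$ in $D^{1,2}(\R^N)$.

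I then introduce the rescaled sequence $w_n:=\beta_n u_n$ with $\beta_n:=P(u_n)^{1/(2(2_{\mu}^{\ast}-1))}\to\beta_\infty:=c^{1/(2(2_{\mu}^{\ast}-1))}$. The exponent is chosen so that the asymptotic equation $-\Delta u_n+Vu_n=c(|x|^{-\mu}\ast|u_n|^{2_{\mu}^{\ast}})|u_n|^{2_{\mu}^{\ast}-2}u_n+o(1)$ rewrites as precisely the \eqref{CE2}-equation for $w_n$ up to $o(1)$. Consequently, $\{w_n\}$ is a $(PS)$ sequence for $J_V$ at the level $c^{\star}:=\tfrac{N+2-\mu}{2(2N-\mu)}\,c^{(2N-\mu)/(N+2-\mu)}$. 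Applying Lemma~\ref{F4} produces a solution $W^0$ of \eqref{CE2} and solutions $W^1,\ldots,W^k$ of \eqref{CCE1} with $\|w_n\|^2\to\sum_{j=0}^{k}\|W^j\|^2$ and $J_V(w_n)\to J_V(W^0)+\sum_{j=1}^{k} J_0(W^j)$. Iterating the scale-invariant splitting Lemma~\ref{BLN} also gives $F(w_n)\to F(W^0)+\sum_{j=1}^{k} F(W^j)$. Since any solution of \eqref{CE2} satisfies $P(W^0)=F(W^0)$ and any solution of \eqref{CCE1} satisfies $\|W^j\|^2=F(W^j)$, both $J_V(W^0)$ and $J_0(W^j)$ simplify to $\tfrac{N+2-\mu}{2(2N-\mu)}F(\cdot)$, and the energy identity becomes
\[c^{(2N-\mu)/(N+2-\mu)}=F(W^0)+\sum_{j=1}^{k} F(W^j).\]

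The range of $c$ now controls the decomposition. Each nontrivial bubble $W^j$ ($j\geq 1$) satisfies $F(W^j)\geq S_{H,L}^{(2N-\mu)/(N+2-\mu)}$, obtained by combining $\|W^j\|^2=F(W^j)$ with the defining inequality of $S_{H,L}$; the same lower bound holds for any nonzero $W^0$ using the nonnegativity of $V$ (as exploited in the proof of Proposition~\ref{NE}). The upper bound $c<2^{(N+2-\mu)/(2N-\mu)}S_{H,L}$ translates to $c^{(2N-\mu)/(N+2-\mu)}<2S_{H,L}^{(2N-\mu)/(N+2-\mu)}$, forcing at most one of $W^0,W^1,\ldots,W^k$ to be nonzero; the all-zero scenario would give $c=0<S_{H,L}$. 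In the favorable case $W^0\neq 0$ and $k=0$, the norm convergence $\|w_n\|^2\to\|W^0\|^2$ combined with weak convergence yields $w_n\to W^0$ strongly in $D^{1,2}(\R^N)$, hence $u_n\to W^0/\beta_\infty$ strongly in $\cm$, proving the $(PS)_c$ condition. The main obstacle is the bubbling alternative $W^0=0$, $k=1$: then $F(W^1)=c^{(2N-\mu)/(N+2-\mu)}$ lies strictly between $S_{H,L}^{(2N-\mu)/(N+2-\mu)}$ and $2S_{H,L}^{(2N-\mu)/(N+2-\mu)}$, making $W^1$ a strictly non-minimizing solution of \eqref{CCE1}. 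I would rule this out via an energy-gap property asserting that every non-minimizing solution of \eqref{CCE1} satisfies $F\geq 2S_{H,L}^{(2N-\mu)/(N+2-\mu)}$, which together with $c>S_{H,L}$ (which excludes the minimizing sub-case) contradicts the upper bound on $c$.
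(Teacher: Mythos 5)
Your proposal reproduces the paper's strategy faithfully (Lagrange multiplier on $\mathcal{M}$, the rescaling $w_n=P(u_n)^{\frac{N-2}{2(N+2-\mu)}}u_n$ so that $\{w_n\}$ becomes a $(PS)$ sequence for $J_V$ at the level $\frac{N+2-\mu}{2(2N-\mu)}c^{\frac{2N-\mu}{N+2-\mu}}$, then Lemma~\ref{F4} and the energy bounds to force at most one nontrivial piece in the decomposition), and you correctly single out the genuine delicate point: excluding the alternative $W^0=0$, $k=1$. The paper's write-up of Corollary~\ref{PS3} is terse there (it says ``$k=0$ or $k=1$ with $w^0=0$. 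In conclusion, $\{w_n\}$ is relatively compact'' without explicitly killing the second branch), but the intended mechanism is the one stated in the proof of Proposition~\ref{NE}: every nontrivial finite-energy solution of \eqref{CCE1} is (up to translation, dilation and sign) the extremal $U_{\delta,z}$, hence every such solution has $J_0$ \emph{exactly} equal to $\frac{N+2-\mu}{2(2N-\mu)}S_{H,L}^{\frac{2N-\mu}{N+2-\mu}}$. In the branch $W^0=0$, $k=1$, the energy identity would then give $\frac{N+2-\mu}{2(2N-\mu)}c^{\frac{2N-\mu}{N+2-\mu}}=J_0(W^1)=\frac{N+2-\mu}{2(2N-\mu)}S_{H,L}^{\frac{2N-\mu}{N+2-\mu}}$, i.e.\ $c=S_{H,L}$, contradicting $c>S_{H,L}$. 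Your proposed ``energy-gap property'' (nonminimizing solutions of \eqref{CCE1} have at least doubled nonlocal energy) is a strictly weaker sufficient condition than this classification; it would indeed close the argument, but you state it as a hypothesis without proof and without a reference, so as written it is a gap. You should instead invoke the classification already recorded in Proposition~\ref{NE}, which the paper has available and which eliminates the branch at once. Everything up to that last paragraph of your proposal matches the paper both in structure and in the numerology.
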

\begin{proof}
Let $\{u_{n}\}\subset D^{1,2}(\mathbb{R}^N)$ be a $(PS)_{c}$-sequence for $P|_{\mathcal{M}}$ with $c\in(S_{H,L},2^{\frac{N+2-\mu}{2N-\mu}}S_{H,L})$.
Then, $\{w_{n}\}$ is a $(PS)_{c}$-sequence for $J_{V}$ with
$$\frac{N+2-\mu}{4N-2\mu} S_{H,L}^{\frac{2N-\mu}{N+2-\mu}}<c<
\frac{N+2-\mu}{2N-\mu}S_{H,L}^{\frac{2N-\mu}{N+2-\mu}},
$$
where $w_{n}=P(u_{n})^{\frac{N-2}{2(N+2-\mu)}}u_{n}$. We know from Lemma \ref{F4} that there exist a number $k\in \N$, a solution $w^{0}$ of \eqref{CE2} and solutions $w^{1},... ,w^{k}$ of \eqref{CCE1}, such that for some subsequence $n\rightarrow\infty$
$$
\aligned
&\|w_{n}\|^{2}\rightarrow \Sigma_{j=0}^{k}\|w^{j}\|^{2},\\
&J_{V}(w_{n})\rightarrow J_{V}(w^{0})+\Sigma_{j=1}^{k}J_{0}(w^{j}).
\endaligned
$$
By Proposition \ref{NE}, if $w$ is a nontrivial solution of \eqref{CE2}, then
$$
J_{V}(w)>\frac{N+2-\mu}{2(2N-\mu)} S_{H,L}^{\frac{2N-\mu}{N+2-\mu}}.
$$
While for every nontrivial solution $v$ of \eqref{CCE1}
$$
J_{0}(v)\geq\frac{N+2-\mu}{2(2N-\mu)} S_{H,L}^{\frac{2N-\mu}{N+2-\mu}}.
$$
Since
$$c<
\frac{N+2-\mu}{2N-\mu}S_{H,L}^{\frac{2N-\mu}{N+2-\mu}},
$$ we have $k=0$ or $k=1$ with $w^{0}=0$.
In conclusion, $\{w_{n}\}$ is relatively compact in $D^{1,2}(\mathbb{R}^N)$.

So, the functional $P|_{\mathcal{M}}$ satisfies the $(PS)_{c}$-condition for $c\in(S_{H,L},2^{\frac{N+2-\mu}{2N-\mu}}S_{H,L})$.
\end{proof}

\subsection{Proof of Theorem \ref{EXS}}
We now consider the functions
$$
\varphi_{\delta,z}(x)=\frac{U_{\delta,z}(x)}{\|U_{\delta,z}(x)\|_{NL}}=
S_{H,L}^{\frac{2-N}{4}}C(N,\mu)^{\frac{N(2-N)}{4(2N-\mu)}}
\frac{[N(N-2)\delta]^{\frac{N-2}{4}}}{(\delta+|x-z|^{2})^{\frac{N-2}{2}}},\ \ \delta>0,\ \ z\in\mathbb{R}^{N}.
$$
Note that $\forall \delta>0,z\in\mathbb{R}^{N}$
$$
\|\varphi_{\delta,z}\|^{2}=S_{H,L},\ \ \  \|\varphi_{\delta,z}\|_{NL}=1
$$
and so $\varphi_{\delta,z}\in\mathcal{M}$. Moreover $|\varphi_{\delta,z}|_{p}$, $ p\in(\frac{N}{N-2},\frac{2N}{N-2})$, for any fixed $p$ depends only on $\delta$ because of the invariance by translation of the $L^{p}(\mathbb{R}^{N})$ norm.

\begin{lem}\label{Pb2}
Suppose that $V(x)$ satisfies $(V_3)$. Then
$$
P(\varphi_{\delta,z})<2^{\frac{N+2-\mu}{2N-\mu}}S_{H,L},\ \ \forall\delta>0,\ \ \forall z\in\mathbb{R}^{N}.
$$
\end{lem}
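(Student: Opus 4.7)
The plan is direct: rewrite $P(\varphi_{\delta,z})$ as $S_{H,L}$ plus the potential term, bound the potential term by H\"older, and read off the conclusion from $(V_3)$.

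First, by the identities already recorded in the excerpt, $\|\varphi_{\delta,z}\|^2 = S_{H,L}$ and $\|\varphi_{\delta,z}\|_{NL}=1$, so
\[
P(\varphi_{\delta,z}) \;=\; \int_{\mathbb{R}^N}|\nabla \varphi_{\delta,z}|^2\,dx + \int_{\mathbb{R}^N} V(x)|\varphi_{\delta,z}|^2\,dx \;=\; S_{H,L} + \int_{\mathbb{R}^N}V(x)|\varphi_{\delta,z}|^2\,dx.
\]
Hence it suffices to show
\[
\int_{\mathbb{R}^N} V(x)|\varphi_{\delta,z}|^2\,dx \;<\; \bigl(2^{\frac{N+2-\mu}{2N-\mu}}-1\bigr)S_{H,L}.
\]

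Next, by H\"older's inequality with exponents $N/2$ and $N/(N-2)$,
\[
\int_{\mathbb{R}^N}V(x)|\varphi_{\delta,z}|^2\,dx \;\le\; |V|_{L^{N/2}}\,|\varphi_{\delta,z}|_{2^*}^{2}.
\]
To compute $|\varphi_{\delta,z}|_{2^*}^{2}$, recall $\varphi_{\delta,z} = S_{H,L}^{(2-N)/4}\,C(N,\mu)^{N(2-N)/(4(2N-\mu))}\,\widetilde{U}_{\delta,z}$ and that $\widetilde{U}_{\delta,z}$ realizes the Sobolev constant $S|\widetilde{U}_{\delta,z}|_{2^*}^2 = \|\widetilde{U}_{\delta,z}\|^2$. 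Combined with $\|\varphi_{\delta,z}\|^2 = S_{H,L}$ and the identity $S_{H,L}=S/C(N,\mu)^{(N-2)/(2N-\mu)}$ from Proposition~\ref{ExFu}, a short calculation gives
\[
|\varphi_{\delta,z}|_{2^*}^{2} \;=\; \frac{S_{H,L}}{S} \;=\; C(N,\mu)^{-\frac{N-2}{2N-\mu}}.
\]
Notice this is independent of $\delta$ and $z$, as expected from the scaling and translation invariance of the $L^{2^*}$ norm under the rescaling $u\mapsto u_{r,x_0}$.

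Plugging in $(V_3)$, which asserts $|V|_{L^{N/2}} < C(N,\mu)^{(N-2)/(2N-\mu)}\,S_{H,L}\bigl(2^{(N+2-\mu)/(2N-\mu)}-1\bigr)$, we obtain
\[
\int_{\mathbb{R}^N}V(x)|\varphi_{\delta,z}|^{2}\,dx \;\le\; |V|_{L^{N/2}}\cdot C(N,\mu)^{-\frac{N-2}{2N-\mu}} \;<\; \bigl(2^{\frac{N+2-\mu}{2N-\mu}}-1\bigr)S_{H,L},
\]
and adding $S_{H,L}$ yields the desired strict inequality. There is no real obstacle here; the only point requiring care is the exact arithmetic of the constant in $|\varphi_{\delta,z}|_{2^*}^{2}$, which must match the power of $C(N,\mu)$ appearing in $(V_3)$ so that the two constants cancel cleanly to leave $(2^{(N+2-\mu)/(2N-\mu)}-1)S_{H,L}$ on the right.
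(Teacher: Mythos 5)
Your proof is correct and follows essentially the same route as the paper's: split $P(\varphi_{\delta,z})=S_{H,L}+\int V|\varphi_{\delta,z}|^2$, apply H\"older with $(N/2,\,N/(N-2))$, identify the constant $|\varphi_{\delta,z}|_{2^*}^2 = C(N,\mu)^{-\frac{N-2}{2N-\mu}}$, and conclude from $(V_3)$. The only cosmetic difference is how that constant is produced: the paper writes $|\varphi_{\delta,z}|_{2^*}^2$ in terms of $\|\varphi_{\delta,z}\|_{NL}=1$ using the equality case of Hardy--Littlewood--Sobolev for the extremal, while you compute it directly from $\|\varphi_{\delta,z}\|^2=S_{H,L}$, the Sobolev quotient for $\widetilde U_{\delta,z}$, and the identity $S_{H,L}=S/C(N,\mu)^{(N-2)/(2N-\mu)}$; both yield the same number and the argument is otherwise identical.
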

\begin{proof}
Using $(V_3)$, the H\"{o}lder inequality and the Hardy--Littlewood--Sobolev inequality, we get
$$
\aligned
P(\varphi_{\delta,z})&=\int_{\mathbb{R}^{N}}|\nabla\varphi_{\delta,z}|^{2}dx+
\int_{\mathbb{R}^{N}}V(x)|\varphi_{\delta,z}|^{2}dx\\
&\leq S_{H,L}+|V(x)|_{\frac{N}{2}}\Big(\int_{\mathbb{R}^{N}}|\varphi_{\delta,z}|^{\frac{2N}{N-2}}dx\Big)^{\frac{N-2}{N}}\\
&= S_{H,L}+|V(x)|_{\frac{N}{2}}\frac{1}{C(N,\mu)^{\frac{N-2}{2N-\mu}}}
\Big(\int_{\mathbb{R}^N}\int_{\mathbb{R}^N}\frac{|\varphi_{\delta,z}(x)|^{2_{\mu}^{\ast}}
|\varphi_{\delta,z}(y)|^{2_{\mu}^{\ast}}}{|x-y|^{\mu}}dxdy\Big)^{\frac{N-2}{2N-\mu}}\\
&<S_{H,L}+(2^{\frac{N+2-\mu}{2N-\mu}}-1)S_{H,L}=2^{\frac{N+2-\mu}{2N-\mu}}S_{H,L}.
\endaligned
$$
\end{proof}

Now put
$$
\phi(x)=\left\{\begin{array}{l}
\displaystyle 0\hspace{4.14mm}\mbox{if}\hspace{1.14mm} |x|<1,\\
\displaystyle 1\hspace{4.14mm}\mbox{if}\hspace{1.14mm} |x|\geq1,
\end{array}
\right.
$$
and define
$$
\alpha:D^{1,2}(\mathbb{R}^N)\rightarrow\mathbb{R}^{N+1}
$$
$$
\alpha(u)=\frac{1}{S_{H,L}}\int_{\mathbb{R}^{N}}\Big(\frac{x}{|x|},\phi(x)\Big)|\nabla u|^{2}dx=(\beta(u),\gamma(u)),
$$
where
$$
\beta(u)=\frac{1}{S_{H,L}}\int_{\mathbb{R}^{N}}\frac{x}{|x|}|\nabla u|^{2}dx
$$
and
$$
\gamma(u)=\frac{1}{S_{H,L}}\int_{\mathbb{R}^{N}}\phi(x)|\nabla u|^{2}dx.
$$

Denote
$$
\mathcal{A}:=\{u\in\mathcal{M}:\alpha(u)=(0,\frac{1}{2})\},
$$
and
$$
c^{\star}=\inf_{u\in\mathcal{A}}P(u).
$$

The following proposition is due to Benci and Cerami \cite{BC1} with $S$ replaced by $S_{H,L}$.

\begin{Prop}\label{Pb3}
\begin{itemize}
\item[(1).] $c^{\star}>S_{H,L}$;
\item[(2).] There is a $\delta_{1}:0<\delta_{1}<\frac{1}{2}$ such that
$$\aligned
P(\varphi_{\delta_{1},z})&<\frac{S_{H,L}+c^{\star}}{2},\hspace{4.14mm}\forall z\in\mathbb{R}^{N},\\
\gamma(\varphi_{\delta_{1},z})&<\frac{1}{2},\hspace{4.14mm}\forall z:|z|<\frac{1}{2},\\
\left|\beta(\varphi_{\delta_{1},z})-\frac{z}{|z|}\right|&<\frac{1}{4},\hspace{4.14mm}\forall z:|z|\geq\frac{1}{2};
\endaligned$$
\item[(3).] There is a $\delta_{2}:\delta_{2}>\frac{1}{2}$ such that
$$\aligned
P(\varphi_{\delta_{2},z})&<\frac{S_{H,L}+c^{\star}}{2},\hspace{4.14mm}\forall z\in\mathbb{R}^{N},\\
\gamma(\varphi_{\delta_{2},z})&>\frac{1}{2},\hspace{4.14mm}\forall z\in\mathbb{R}^{N};
\endaligned$$
\item[(4).] There exists $R\in\mathbb{R}^{+}$ such that
$$\aligned
P(\varphi_{\delta,z})&<\frac{S_{H,L}+c^{\star}}{2},\hspace{4.14mm}\forall z:|z|\geq R\ \ \mbox{and}\ \ \delta\in[\delta_{1},\delta_{2}],\\
(\beta(\varphi_{\delta,z})|z)_{\mathbb{R}^{N}}&>0,\hspace{4.14mm}\forall z:|z|\geq R\ \ \mbox{and}\ \ \delta\in[\delta_{1},\delta_{2}].
\endaligned$$
\end{itemize}
\end{Prop}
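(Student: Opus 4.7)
The unifying device for all four parts is the scaling identity $\varphi_{\delta,z}(x)=\delta^{-(N-2)/4}\varphi_{1,0}((x-z)/\sqrt{\delta})$, which converts every quantity of interest into an integral against $|\nabla\varphi_{1,0}|^{2}$ or $\varphi_{1,0}^{2}$:
\begin{align*}
P(\varphi_{\delta,z})&=S_{H,L}+\delta\int_{\R^N}V(z+\sqrt{\delta}\,y)\,\varphi_{1,0}^{2}(y)\,dy,\\
\beta(\varphi_{\delta,z})&=\frac{1}{S_{H,L}}\int_{\R^N}\frac{z+\sqrt{\delta}\,y}{|z+\sqrt{\delta}\,y|}\,|\nabla\varphi_{1,0}(y)|^{2}\,dy,\\
\gamma(\varphi_{\delta,z})&=\frac{1}{S_{H,L}}\int_{\R^N}\phi(z+\sqrt{\delta}\,y)\,|\nabla\varphi_{1,0}(y)|^{2}\,dy.
\end{align*}
A uniform vanishing estimate on the $V$-term in the three degenerate regimes $\delta\to 0^{+}$, $\delta\to\infty$, and $|z|\to\infty$ (with $\delta$ in a fixed bounded interval) is the first ingredient: decompose $V=V_{c}+W$ with $V_{c}\in C_{c}(\R^N)$ and $|W|_{N/2}<\varepsilon$; then $|\int W\varphi_{\delta,z}^{2}|\le |W|_{N/2}|\varphi_{\delta,z}|_{2^{\ast}}^{2}\le C\varepsilon$ uniformly in $(\delta,z)$, while for $V_{c}$ the scaled integral vanishes by dominated convergence, since the support of $y\mapsto V_{c}(z+\sqrt{\delta}\,y)$ leaves every fixed ball in each of the three regimes.

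With this estimate in hand, parts (2), (3), (4) reduce to direct checks. For (2) one chooses $\delta_{1}$ small: the $P$-bound is immediate; $\gamma(\varphi_{\delta_{1},z})<1/2$ for $|z|<1/2$ holds because $\phi(z+\sqrt{\delta_{1}}y)\to 0$ pointwise for $|z|<1$, uniformly on $\{|z|\le 1/2\}$; and $|\beta(\varphi_{\delta_{1},z})-z/|z||<1/4$ for $|z|\ge 1/2$ follows because $(z+\sqrt{\delta}y)/|z+\sqrt{\delta}y|\to z/|z|$ pointwise in $y$, uniformly on $\{|z|\ge 1/2\}$. For (3) one chooses $\delta_{2}$ large: for any $z$ and a.e.\ $y$, $|z+\sqrt{\delta_{2}}y|\to\infty$, so $\phi\to 1$ and $\gamma\to 1$ uniformly in $z$. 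For (4), after $\delta_{1},\delta_{2}$ are fixed, one takes $R$ large: the $V$-estimate and $(\beta(\varphi_{\delta,z}),z)_{\R^N}=\frac{1}{S_{H,L}}\int\frac{(z+\sqrt{\delta}y)\cdot z}{|z+\sqrt{\delta}y|}\,|\nabla\varphi_{1,0}|^{2}dy\to |z|$ both follow uniformly in $\delta\in[\delta_{1},\delta_{2}]$ as $|z|\to\infty$.

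Part (1) is the heart of the argument. Arguing by contradiction, suppose $c^{\star}=S_{H,L}$ and take $\{u_{n}\}\subset\mathcal{A}$ with $P(u_{n})\to S_{H,L}$. Hypothesis $(V_{3})$ together with the Sobolev inequality gives $P(u)\ge(1-|V|_{N/2}/S)\int|\nabla u|^{2}>0$, so $\{u_{n}\}$ is bounded in $D^{1,2}(\R^N)$; extract $u_{n}\rightharpoonup u^{0}$. The functional $u\mapsto\int V u^{2}$ is weakly continuous on $D^{1,2}(\R^N)$ (split $V=V_{c}+W$ and use local $L^{2}$-compactness plus $|W|_{N/2}$-smallness). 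I would then exclude $u^{0}\ne 0$: Lemma \ref{BLN} yields $\|u^{0}\|_{NL}^{2\cdot 2_{\mu}^{\ast}}+\lim\|v_{n}\|_{NL}^{2\cdot 2_{\mu}^{\ast}}=1$ for $v_{n}:=u_{n}-u^{0}$; combined with the two Sobolev lower bounds $\int|\nabla u^{0}|^{2}\ge S_{H,L}\|u^{0}\|_{NL}^{2}$, $\int|\nabla v_{n}|^{2}\ge S_{H,L}\|v_{n}\|_{NL}^{2}$ and the strict inequality $s+(1-s^{2_{\mu}^{\ast}})^{1/2_{\mu}^{\ast}}>1$ on $s\in(0,1)$, this forces $\int V(u^{0})^{2}<0$; but then $u^{0}/\|u^{0}\|_{NL}\in\mathcal{M}$ has $P<S_{H,L}$, contradicting Proposition \ref{NE} (the case $\|u^{0}\|_{NL}=1$ contradicts Proposition \ref{NE} directly). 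Hence $u^{0}=0$, and I then apply Lemma \ref{CCP1}: the estimates $\omega_{i}\ge S_{H,L}\nu_{i}^{1/2_{\mu}^{\ast}}$, $\omega_{\infty}\ge S_{H,L}\nu_{\infty}^{2/2_{\mu}^{\ast}}$, the identities $\sum\omega_{i}+\omega_{\infty}=S_{H,L}$, $\sum\nu_{i}+\nu_{\infty}=1$, and $2_{\mu}^{\ast}>2$ (guaranteed by $\mu<4$) force either $\nu=\delta_{z_{0}}$ with $\omega=S_{H,L}\delta_{z_{0}}$, or $\nu_{\infty}=1$ with $\omega_{\infty}=S_{H,L}$. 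In the first case $\beta(u_{n})\to z_{0}/|z_{0}|$ whenever $z_{0}\ne 0$, so the constraint $\beta=0$ forces $z_{0}=0$; but then $\gamma(u_{n})\to\phi(0)=0\ne 1/2$. In the second case $\gamma(u_{n})\to 1\ne 1/2$. Either outcome contradicts $u_{n}\in\mathcal{A}$, so $c^{\star}>S_{H,L}$.

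The main obstacle will be the rigidity step in (1). Enforcing $u^{0}=0$ demands the simultaneous exploitation of the nonlocal Br\'ezis--Lieb splitting, the sharp Sobolev-type bound carried by $S_{H,L}$, and the strict superadditivity $s+(1-s^{2_{\mu}^{\ast}})^{1/2_{\mu}^{\ast}}>1$; the borderline case $|z_{0}|=1$ (at which $\phi$ is discontinuous) must be handled in the concentration step either by observing that $z_{0}\ne 0$ already gives $|\beta(u_{n})|\to 1$ regardless of where on the unit sphere $z_{0}$ sits, or by refining the analysis to identify $u_{n}\sim\varphi_{\delta_{n},z_{n}}+o(1)$ in $D^{1,2}(\R^N)$ so that $\beta=0$ again pins $z_{0}=0$ and the value of $\gamma$ is unambiguous.
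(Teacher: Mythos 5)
The paper offers no proof of Proposition \ref{Pb3}; it simply states that the result ``is due to Benci and Cerami \cite{BC1} with $S$ replaced by $S_{H,L}$.'' Your blind reconstruction is, in substance, precisely the Benci--Cerami argument transplanted to the nonlocal setting: the scaling identities, the uniform $o(1)$ estimate on $\int V\varphi_{\delta,z}^{2}$ in the three degenerate regimes, and for part~(1) a compactness-versus-rigidity dichotomy for a minimizing sequence in $\mathcal{A}$, with the classical ingredients replaced by their Hardy--Littlewood--Sobolev analogues (Lemma~\ref{BLN} for the nonlocal Brezis--Lieb splitting, Lemma~\ref{CCP1} for the nonlocal concentration--compactness at finite points and at infinity, and Proposition~\ref{NE} for the non-attainment of $S_{\mathcal{M}}=S_{H,L}$). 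Your handling of the borderline $|z_{0}|=1$ issue --- observing that $z_{0}\neq0$ already forces $|\beta(u_{n})|\to1$ and hence clashes with the constraint $\beta=0$ before $\gamma$ ever enters --- is exactly the right resolution. The rigidity step, the strict superadditivity inequality, and the two-case analysis ($\nu=\delta_{z_{0}}$ versus $\nu_{\infty}=1$) are all sound.

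There is one genuine gap, in the uniform vanishing estimate for the $V$-term. After splitting $V=V_{c}+W$ with $V_{c}\in\mathcal{C}_{c}(\R^{N})$, you claim that $\delta\int V_{c}(z+\sqrt{\delta}\,y)\varphi_{1,0}^{2}(y)\,dy\to0$ by dominated convergence because the support of $y\mapsto V_{c}(z+\sqrt{\delta}\,y)$ ``leaves every fixed ball.'' Both halves of this fail in general: the natural dominating function $\|V_{c}\|_{\infty}\varphi_{1,0}^{2}$ decays like $|y|^{-2(N-2)}$ and is therefore \emph{not} in $L^{1}(\R^{N})$ when $N\in\{3,4\}$ (and the theorem does cover $N=3,4$); moreover for $\delta\to\infty$ with $|z|$ bounded the support is the shrinking ball $B_{K/\sqrt{\delta}}(-z/\sqrt{\delta})$, which collapses toward the origin rather than leaving balls, while for $\delta\to0^{+}$ with $|z|$ bounded it \emph{engulfs} every fixed ball. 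The conclusion is nonetheless true, and the correct route is the one the paper itself uses in Proposition~\ref{NE}: by $(V_{2})$ pick $q\in(\tfrac{N}{2},p_{2})$ (so $\frac{N}{N-2}<2q'<2^{\ast}$) to get $|\!\int V\varphi_{\delta,z}^{2}|\le|V|_{q}\,|\varphi_{\delta,0}|_{2q'}^{2}\to0$ as $\delta\to0^{+}$, and pick $q\in(p_{1},\tfrac{N}{2})$ (so $2q'>2^{\ast}$) for the $\delta\to\infty$ regime; the $|z|\to\infty$ regime with $\delta\in[\delta_{1},\delta_{2}]$ is handled by the explicit pointwise decay $\varphi_{\delta,z}(x)\le C\delta_{2}^{(N-2)/4}(|z|-K)^{2-N}$ on $\mathrm{supp}\,V_{c}\subset B_{K}$. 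Replace the dominated-convergence paragraph with these H\"older computations and the proof is complete.
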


Now let
$$
Z=\{(z,\delta)\in \mathbb{R}^{N+1}:|z|<R,\delta\in[\delta_{1},\delta_{2}]\},
$$
and let $\Phi$ be the operator
$$
\Phi:[\mathbb{R}^{N}\times(0,+\infty)]\rightarrow D^{1,2}(\mathbb{R}^{N})
$$
given by
$$
\Phi(z,\delta)=\varphi_{\delta,z}(x).
$$
Note that $\Phi$ is continuous. Call $\Sigma$ the subset of $\mathcal{M}$ defined by
$$
\Sigma=\{\Phi(z,\delta):(z,\delta)\in \overline{Z}\}.
$$
Consider then the family
$$
\mathbb{A}:=\{h\in L(\mathcal{M},\mathcal{M}) : h(u)=u,\forall u\in P^{-1}((-\infty,\frac{S_{H,L}+c^{\star}}{2}))\}
$$
and define
$$
\Gamma=\{B\subset\mathcal{M}:B=h(\Sigma),h\subset\mathbb{A}\}.
$$

Similar to the proof of Lemma 3.12 \cite{BC1}, we know that
\begin{lem}\label{Pb10}
If $B\in\Gamma$, then $B\cap\mathcal{A}\neq\emptyset$.
\end{lem}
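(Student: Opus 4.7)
The plan is to prove Lemma \ref{Pb10} by a Brouwer degree argument on $\overline{Z}$. Given $h\in\mathbb{A}$, I consider the continuous map $F:\overline{Z}\to\mathbb{R}^{N+1}$ defined by $F(z,\delta)=\alpha(h(\varphi_{\delta,z}))-(0,\tfrac{1}{2})$. A point $h(\varphi_{\delta,z})\in B=h(\Sigma)$ lies in $\mathcal{A}$ precisely when $F(z,\delta)=0$, so it suffices to produce a zero of $F$ in $Z$. The crucial preliminary point is that parts (2)--(4) of Proposition \ref{Pb3} imply $P(\varphi_{\delta,z})<\tfrac{S_{H,L}+c^{\star}}{2}$ on all of $\partial Z$; hence $\Phi(\partial Z)\subset P^{-1}\bigl((-\infty,\tfrac{S_{H,L}+c^{\star}}{2})\bigr)$, and by the defining property of $\mathbb{A}$ the operator $h$ acts as the identity on $\Phi(\partial Z)$. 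Therefore
$$F\big|_{\partial Z}(z,\delta)=\bigl(\beta(\varphi_{\delta,z}),\;\gamma(\varphi_{\delta,z})-\tfrac{1}{2}\bigr).$$

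Using Proposition \ref{Pb3} once more, I would verify face by face that $F\neq 0$ on $\partial Z$: on $\{\delta=\delta_{1},|z|<\tfrac{1}{2}\}$ the second coordinate is strictly negative, on $\{\delta=\delta_{2},|z|\leq R\}$ strictly positive; on $\{\delta=\delta_{1},\tfrac{1}{2}\leq|z|\leq R\}$ the bound $|\beta-z/|z||<\tfrac{1}{4}$ forces $|\beta|\geq\tfrac{3}{4}$ by reverse triangle; and on $\{|z|=R,\delta\in[\delta_{1},\delta_{2}]\}$ the inequality $(\beta,z)_{\mathbb{R}^{N}}>0$ forces $\beta\neq 0$.

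Next I compute $\deg(F,Z,0)$ by connecting $F|_{\partial Z}$ to the reference map $G(z,\delta)=(z,\delta-m)$, $m=\tfrac{\delta_{1}+\delta_{2}}{2}$, through the linear homotopy
$$H_{t}(z,\delta)=\bigl((1-t)\beta(\varphi_{\delta,z})+tz,\;(1-t)(\gamma(\varphi_{\delta,z})-\tfrac{1}{2})+t(\delta-m)\bigr),\qquad t\in[0,1].$$
The key step is to check $H_{t}\neq 0$ on $\partial Z$ for every $t$. On $\{\delta=\delta_{1},|z|<\tfrac{1}{2}\}$ both summands of the second coordinate are strictly negative (as $\delta_{1}-m<0$ and $\gamma-\tfrac{1}{2}<0$); the case $\{\delta=\delta_{2}\}$ is analogous with both summands positive. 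On $\{\delta=\delta_{1},|z|\geq\tfrac{1}{2}\}$, expanding $|\beta-z/|z||^{2}<\tfrac{1}{16}$ together with $|\beta|>\tfrac{3}{4}$ gives $\beta\cdot z>\tfrac{3}{4}|z|>0$, so $\bigl((1-t)\beta+tz\bigr)\cdot z=(1-t)\beta\cdot z+t|z|^{2}>0$. On $\{|z|=R\}$ the same inner product is positive directly from Proposition \ref{Pb3}(4). Homotopy invariance of Brouwer degree then yields $\deg(F,Z,0)=\deg(G,Z,0)=1$, since $(0,m)\in\mathrm{int}\,Z$ is the unique preimage of $0$ under $G$ and the Jacobian of $G$ equals the identity.

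Non-vanishing of the degree produces $(z_{0},\delta_{0})\in Z$ with $F(z_{0},\delta_{0})=0$, i.e.\ $\alpha(h(\varphi_{\delta_{0},z_{0}}))=(0,\tfrac{1}{2})$, and hence $h(\varphi_{\delta_{0},z_{0}})\in B\cap\mathcal{A}$. I expect the main obstacle to be the homotopy non-vanishing verification, which requires simultaneously exploiting all four quantitative estimates encoded in Proposition \ref{Pb3} in order to rule out zeros of $H_{t}$ on every face of $\partial Z$.
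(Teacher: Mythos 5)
Your proof is correct, and it is exactly the Brouwer-degree argument that the paper has in mind when it points to Lemma~3.12 of Benci--Cerami \cite{BC1}: use Proposition~\ref{Pb3}(2)--(4) to see that $h$ fixes $\Phi(\partial Z)$ and that $\alpha\circ\Phi-(0,\tfrac12)$ does not vanish on $\partial Z$, then homotope face by face to the identity-type map $(z,\delta)\mapsto(z,\delta-m)$ to get degree one. Since the paper gives no written proof and simply defers to \cite{BC1}, your argument is a faithful and fully detailed rendition of the intended one.
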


Now we set
\begin{equation}\label{E7}
c=\inf_{B\in\Gamma}\sup_{u\in B}P(u)
\end{equation}
$$
K_{c}=\{u\in \mathcal{M}:P(u)=c\mbox{ and } P'|_{\mathcal{M}}(u)=0\}.
$$
Moreover for $d\in\mathbb{R}$, $P^{d}$ will be
$$
P^{h}=\{u\in \mathcal{M}:P(u)\leq d\}.
$$

\noindent
{\bf Proof of Theorem \ref{EXS}.}
We shall prove the theorem showing that $K_{c}\neq\emptyset$, i.e., that $c$ defined by \eqref{E7} is a critical level and there is a critical point $u$ such that $P(u)=c$. By $\Sigma\in \Gamma$ and Lemma \ref{Pb2}, we know
$$
c\leq\sup_{u\in \Sigma}P(u)\leq\sup_{z\in\mathbb{R}^{N},\delta\in\mathbb{R}^{+}}
P(\varphi_{\delta,z})<2^{\frac{N+2-\mu}{2N-\mu}}S_{H,L}.
$$
Also by Lemma \ref{Pb10}, $B\cap\mathcal{A}\neq\emptyset$, $\forall B\in\Gamma$, so
$$
c\geq\inf_{\mathcal{A}}P(u)=c^{\star}>S_{H,L}.
$$
Hence
$$
S_{H,L}<c^{\star}<2^{\frac{N+2-\mu}{2N-\mu}}S_{H,L}.
$$
Suppose now $K_{c}=\emptyset$. By Proposition \ref{PS3} the Palais-Smale condition holds
in
$$\{u\in\mathcal{M}:S_{H,L}<P(u)<2^{\frac{N+2-\mu}{2N-\mu}}S_{H,L}\},$$
then using a variant of a well-known deformation Lemma (see \cite{Wi}) we
find a continuous map
$$
\eta:[0,1]\times\mathcal{M}\rightarrow\mathcal{M}
$$
and a positive number $\varepsilon_{0}$ such that
$$
P^{c+\varepsilon_{0}}\backslash P^{c-\varepsilon_{0}}\subset P^{2^{\frac{N+2-\mu}{2N-\mu}}S_{H,L}}\backslash P^{\frac{S_{H,L}+c^{\star}}{2}},
$$
$$
\eta(0,u)=u,
$$
$$
\eta(t,u)=u,\ \ \forall u\in P^{c-\varepsilon_{0}}\cup\{\mathcal{M}\backslash P^{c+\varepsilon_{0}}\},\forall t\in(0,1)
$$
and
$$
\eta(1,P^{c+\varepsilon_{0}/2})\subset P^{c-\varepsilon_{0}/2}.
$$
Now let $\widetilde{B}\in\Gamma$ be such that
$$
c\leq\sup_{\widetilde{B}}P(u)<c+\frac{\varepsilon_{0}}{2}.
$$
Then $\eta(1,\widetilde{B})\in\Gamma$ and
$$
\sup_{u\in\eta(1,\widetilde{B})}P(u)<c-\frac{\varepsilon_{0}}{2}
$$
contradicting with the definition of $c$. $\hfill{} \Box$


\begin{thebibliography}{99}
\small
\baselineskip=12pt

\bibitem{AC}
\newblock N. Ackermann,
\newblock \emph{ On a periodic Schr\"{o}dinger equation with
nonlocal superlinear part},
\newblock Math. Z., \textbf{248} (2004), 423--443.

\bibitem{ACTY}
\newblock C.O. Alves, D. Cassani, C. Tarsi \& M. Yang,
\newblock \emph{ Existence and concentration of ground state solutions for a critical nonlocal Schr\"{o}dinger equation in $\R^2$},
\newblock J. Differential Equations, \textbf{261} (2014), 1933--1972.

\bibitem{AGSY}
\newblock C.O. Alves, F. Gao, M. Squassina, \& M. Yang,
\newblock \emph{Singularly perturbed critical Choquard equations},
\newblock J. Differential Equations, \textbf{263} (2017), 3943--3988.


\bibitem{AY1}
 \newblock C.O. Alves \& M. Yang,
 \newblock \emph{Multiplicity and concentration behavior of solutions for a quasilinear Choquard equation via penalization method},
 \newblock  Proc. Roy. Soc. Edinburgh Sect. A  \textbf{146} (2016), 23--58.

\bibitem{AY2}
 \newblock C.O. Alves \& M. Yang,
 \newblock \emph{Existence of semiclassical ground state solutions for a generalized Choquard equation,}
 \newblock J. Differential Equations \textbf{257} (2014), 4133--4164.

\bibitem{ANY}
\newblock C.O. Alves,  A. B. N\'obrega \& M. Yang,
\newblock \emph{ Multi-bump solutions for Choquard equation with deepening potential well},
\newblock Calc. Var. Partial Differential Equations, \textbf{55} (2016), 28 pp.


\bibitem{BC1}
\newblock V. Benci \& G. Cerami,
\newblock \emph{Existence of positive solutions of the equation $-\Delta u+a(x)u=u^{(N+2)/(N-2)}$ in $\mathbb{R}^N$,}
\newblock J. Funct. Anal., \textbf{88} (1990), 90--117.

\bibitem{BTW}
\newblock A. K. Ben-Naoum, C. Troestler \& M. Willem,
\newblock \emph{Extrema problems with critical Sobolev exponents on unbounded domains,}
\newblock Nonlinear Anal., \textbf{26} (1996), 823--833.


\bibitem{BCS}
\newblock G. Bianchi, J. Chabrowski \& A. Szulkin,
\newblock \emph{On symmetric solutions of an elliptic equation with a nonlinearity involving critical Sobolev exponent,}
\newblock Nonlinear Anal., \textbf{25} (1995), 41--59.

\bibitem{BL1}
\newblock H. Br\'{e}zis \& E. Lieb,
\newblock \emph{A relation between pointwise convergence of functions and convergence of functionals,}
\newblock Proc. Amer. Math. Soc., \textbf{88} (1983), 486--490.

\bibitem{BJS}
\newblock B. Buffoni, L. Jeanjean \& C.A. Stuart,
\newblock \emph{ Existence of a nontrivial solution to a strongly
indefinite semilinear equation},
\newblock Proc. Amer. Math. Soc., \textbf{119} (1993), 179--186.


\bibitem{C}
\newblock J. Chabrowski,
\newblock \emph{Concentration-compactness principle at infinity and semilinear elliptic equations involving critical and subcritical Sobolev exponents},
\newblock Calc. Var. Partial Differential Equations, \textbf{3} (1995), 493--512.

\bibitem{CCS1}
\newblock
S. Cingolani, M. Clapp \& S. Secchi,
\newblock \emph{Multiple solutions to a magnetic nonlinear Choquard equation},
\newblock Z.
Angew. Math. Phys., \textbf{63} (2012), 233--248.

\bibitem{CCS}
\newblock S. Cingolani, S. Secchi \&  M. Squassina,
\newblock \emph{Semi-classical limit for Schr\"odinger equations with magnetic field
and Hartree-type nonlinearities},
\newblock Proc. Roy. Soc. Edinburgh Sect. A \textbf{140} (2010), 973--1009.

\bibitem{CZ}
\newblock D. Cassani \& J. Zhang,
\newblock \emph{Ground states and semiclassical states of nonlinear Choquard equations involving Hardy-Littlewood-Sobolev critical growth},
\newblock arXiv:1611.02919

\newblock Y. Ding, F. Gao \& M. Yang,
\newblock \emph{Semiclassical states for a class of Choquard type equations with critical growth: critical frequency case},
\newblock arXiv:1710.05255

\bibitem{DL}
\newblock Y. H. Ding \& F. H. Lin,
\newblock \emph{Solutions of perturbed
Schr\"odinger equations with critical nonlinearity},
\newblock Calc. Var. Partial Differential Equations, \textbf{30} (2007), 231--249.

\bibitem{DSZ}
\newblock D. Cassani, J. Van Schaftingen \& J. Zhang
\newblock \emph{Groundstates for Choquard type equations with Hardy-Littlewood-Sobolev lower critical exponent},
\newblock, arXiv:1709.09448.

\bibitem{GY}
\newblock F. Gao \& M. Yang,
\newblock \emph{ On the Brezis-Nirenberg type critical problem for nonlinear Choquard equation},
\newblock Sci China Math, doi: 10.1007/s11425-016-9067-5

\bibitem{GYm}
\newblock F. Gao \& M. Yang,
\newblock \emph{ On nonlocal Choquard equations with Hardy--Littlewood--Sobolev critical exponents},
\newblock J. Math. Anal. Appl., \textbf{448} (2017), 1006--1041.

\bibitem{GY3}
\newblock F. Gao \& M. Yang,
\newblock \emph{ A strongly indefinite Choquard equation with critical exponent due to Hardy--Littlewood--Sobolev inequality,}
\newblock Commun. Contemp. Math., https://doi.org/10.1142/S0219199717500377

\bibitem{GSY}
\newblock F. Gao, Z. Shen \& M. Yang,
\newblock \emph{ On the critical Choquard equation with potential well,}
\newblock arXiv:1703.01737


\bibitem{L1}
\newblock E. H. Lieb,
\newblock \emph{Existence and uniqueness of the minimizing solution of Choquard's nonlinear equation},
\newblock Studies
in Appl. Math., \textbf{57} (1976/77), 93--105.

\bibitem{LL}
\newblock E. H. Lieb \& M. Loss, \newblock "Analysis,"
\newblock \emph{Gradute Studies in Mathematics}, AMS,
Providence, Rhode island, 2001.



\bibitem{Ls}
\newblock P.L. Lions,
\newblock \emph{The Choquard equation and related questions},
 \newblock Nonlinear Anal., \textbf{4} (1980), 1063--1072.

\bibitem{Ls1}
\newblock P.L. Lions,
\newblock \emph{The concentration-compactness principle in the calculus of variations, The limit case,  Part 1},
 \newblock Rev. Mat. Iberoamericana, \textbf{1} (1985), 145--201.

\bibitem{Ls2}
\newblock P.L. Lions,
\newblock \emph{The concentration-compactness principle in the calculus of variations, The limit case,  Part 2},
 \newblock Rev. Mat. Iberoamericana, \textbf{1} (1985), 45--121.


\bibitem{Ls3}
\newblock P.L. Lions,
\newblock \emph{The concentration-compactness principle in the calculus of variations, The locally compact case,  Part 1},
 \newblock Ann. Inst. H. Poincar\'{e} Anal. Non Lin\'{e}aire, \textbf{1} (1984), 109--145.

\bibitem{Ls4}
\newblock P.L. Lions,
\newblock \emph{The concentration-compactness principle in the calculus of variations, The locally compact case,  Part 2},
 \newblock Ann. Inst. H. Poincar\'{e} Anal. Non Lin\'{e}aire, \textbf{1} (1984), 223--283.


\bibitem{MZ}
\newblock L. Ma \& L. Zhao,
\newblock \emph{Classification of positive solitary solutions of the nonlinear Choquard equation},
\newblock Arch.
Ration. Mech. Anal., \textbf{195} (2010), 455--467.


\bibitem{MS1}
\newblock V. Moroz \& J. Van Schaftingen,
 \newblock \emph{Groundstates of nonlinear Choquard equations: Existence, qualitative properties and decay asymptotics,}
\newblock J. Funct. Anal., \textbf{265} (2013), 153--184.


\bibitem{MS2}
\newblock V. Moroz \& J. Van Schaftingen,
 \newblock \emph{Groundstates of nonlinear Choquard equation: Hardy--Littlewood--Sobolev critical exponent,}
 \newblock Commun. Contemp. Math, \textbf{17} (2015), Article ID 1550005, 12pp.


\bibitem{MS3}
\newblock V. Moroz \& J. Van Schaftingen,
 \newblock \emph{Existence of groundstates for a class of nonlinear Choquard equations,}
 \newblock Trans. Amer. Math. Soc., \textbf{367} (2015), 6557--6579.

\bibitem{MS4}
\newblock V. Moroz \& J. Van Schaftingen,
\newblock \emph{Semi-classical states for the Choquard equation},
\newblock Calc. Var. Partial Differential Equations \textbf{52} (2015), 199--235.

\bibitem{Ps} S. Pekar, Untersuchung\"{u}ber die Elektronentheorie der Kristalle, Akademie Verlag, Berlin, 1954.





\bibitem{Pe}
\newblock R. Penrose,
\newblock \emph{On gravity's role in quantum state reduction,}
\newblock  Gen. Relativ. Gravitat., \textbf{28} (1996), 581--600.



\bibitem{Sm}
\newblock M. Struwe,
\newblock \emph{A global compactness result for elliptic boundary value problems involving limiting nonlinearities,}
\newblock Math. Z., \textbf{187} (1984), 511--517.



\bibitem{WW}
\newblock J. C. Wei \& M. Winter,
 \newblock \emph{Strongly interacting bumps for the Schr\"{o}dinger--Newton equations,}
 \newblock J. Math. Phys., \textbf{50} (2009), 012905, 22 pp.


\bibitem{Wi} M. Willem, Minimax Theorems,  Progress in Nonlinear Differential Equations and their Applications, 24. Birkh\"{a}user Boston, Inc., Boston, MA, 1996.



\bibitem{Z}
\newblock H. Zou,
\newblock \emph{ Existence and non-existence for Schr\"{o}dinger equations involving critical Sobolev exponents,}
\newblock J. Korean Math. Soc., \textbf{47} (2010), 547--572.



\end{thebibliography}
\end{document}